\theoremstyle{plain}
\newtheorem{theorem}{Theorem}[section]
\newtheorem{proposition}[theorem]{Proposition}
\newtheorem{corollary}[theorem]{Corollary}
\newtheorem{lemma}[theorem]{Lemma}
\theoremstyle{definition}
\newtheorem{definition}[theorem]{Definition}
\newtheorem{remark}[theorem]{Remark}
\newtheorem{conjecture}[theorem]{Conjecture}
\theoremstyle{remark}
\numberwithin{equation}{section}
\newcommand{\N}{\mathbb N}
\newcommand{\Z}{\mathbb Z}
\newcommand{\Q}{\mathbb Q}
\newcommand{\R}{\mathbb R}
\newcommand{\C}{\mathbb C}
\newcommand{\Id}{\mathrm{Id}}
\newcommand{\GL}{\operatorname{GL}}
\newcommand{\Ot}{\operatorname{O}}
\newcommand{\SO}{\operatorname{SO}}
\newcommand{\Spin}{\operatorname{Spin}}
\newcommand{\Hom}{\operatorname{Hom}}
\newcommand{\so}{\mathfrak{so}}
\newcommand{\spin}{\mathfrak{spin}}
\newcommand{\diag}{\operatorname{diag}}
\newcommand{\ba}{\backslash}
\newcommand{\norma}[1]{\|{#1}\|_1}
\newcommand{\ZZ}{\mathbb{E}}
\newcommand{\menos}{R}
\newcommand{\sgn}[1]{\operatorname{sgn}(#1)}
\newcommand{\mult}{\operatorname{mult}}
\newcommand{\rr}{r}
\newcommand{\gammab}{\overline{\gamma}}
\title[Dirac multiplicities on lens spaces]{An explicit formula for the Dirac multiplicities on lens spaces}
\author{Sebastian Boldt, Emilio A. Lauret}
\address{Institut f\"ur Mathematik, Humboldt-Universit\"at zu Berlin, D-10099 Berlin, Germany}
\email{boldt@math.hu-berlin.de}
\address{FaMAF--CIEM \\ Universidad Nacional de C\'ordoba\\ 5000-C\'ordoba, Argentina.}
\email{elauret@famaf.unc.edu.ar}
\subjclass[2010]{Primary 58J50. Secondary 53C27, 58J53}
\keywords{Dirac spectrum, lens spaces, isospectrality, affine lattices}
\date{December 8, 2014}
\thanks{Boldt was supported by DFG Sonderforschungsbereich 647 and Lauret by CONICET, FONCyT, Secyt-UNC and the Oberwolfach Leibniz Fellows Program}
\begin{document}

\maketitle

\begin{abstract}
We present a new description of the spectrum of the (spin-) Dirac operator $D$ on lens spaces. Viewing a spin lens space $L$ as a locally symmetric space $\Gamma\ba \Spin(2m)/\Spin(2m-1)$ and exploiting the representation theory of the $\Spin$ groups, we obtain explicit formulas for the multiplicities of the eigenvalues of $D$ in terms of finitely many integer operations. As a consequence, we present conditions for  lens spaces to be Dirac isospectral. Tackling classic questions of spectral geometry, we prove with the tools developed that neither spin structures nor isometry classes of lens spaces are spectrally determined by giving infinite families of Dirac isospectral lens spaces.
These results are complemented by examples found with the help of a computer.
\end{abstract}

\tableofcontents

\section{Introduction}

Let $M$ be a compact spin Riemannian manifold and fix spin structure on it.
The then canonically given (spin-) Dirac operator is an elliptic, essentially self adjoint first order differential operator.
Its spectrum is therefore real, consists only of eigenvalues with finite multiplicities and has $\pm\infty$ as the only accumulation points.

The determination of Dirac spectra of compact spin Riemannian manifolds is usually only possible in the presence of enough symmetries. As such there are only few explicit computations of Dirac spectra (cf.\ \cite[\S1.1.2]{Bar06}, \cite[\S2]{Ginoux}).
The aim of this paper is to contribute to this problem in the class of lens spaces, which are spherical space forms with cyclic fundamental group.

On the sphere, Hitchin~\cite{Hitchin} was the first to determine the Dirac spectrum in dimension $3$, S.~Sulanke computed it in her Ph.D.\ thesis \cite{Sulanke} in all dimensions, C.~B\"ar gave an alternative proof in his Ph.D.\ thesis \cite{Bar91} (see also \cite{Bar96}).
The Dirac spectrum on $S^n$ consists of the eigenvalues $\pm \left(k + \frac n2 \right)$ with corresponding multiplicities $2^{\lfloor\frac n2\rfloor}\binom {k+n-1}{n-1}$, $k\ge 0$.

We now consider a spherical space form, that is, $M_\Gamma:=\Gamma\ba S^n$ with $\Gamma\subset\SO(n+1)$ acting freely on $S^n$.
We restrict our attention to the case $n=2m-1$ since the only manifold properly covered by an even dimensional sphere is the real projective space.
Spin structures on $M_\Gamma$ are in 1--1 correspondence with homomorphisms $\tau:\Gamma\to\Spin(2m)$ satisfying $\Theta\circ\tau = \operatorname{Id}_\Gamma$, where $\Theta:\Spin(2m)\to\SO(2m)$ is the universal covering homomorphism.
The Dirac spectrum of $M_\Gamma$ endowed with $\tau$ is contained in the Dirac spectrum of the sphere, though its multiplicities are in general smaller than the corresponding ones of the sphere.
More precisely, the multiplicity $\mult_{(M_\Gamma,\tau)}(\pm\lambda_k)$ of the eigenvalue $\pm\lambda_k$ ($\lambda_k:=k+\tfrac n2=k+m-\tfrac12$) satisfies
$$
0\leq  \mult_{(M_\Gamma,\tau)}(\pm\lambda_k) \leq 2^{m-1}\tbinom{k+2m-2}{2m-2}.
$$

B\"ar~\cite{Bar96}, inspired by Ikeda's extensive work on the Laplace spectrum of spherical space forms, approached the spectrum of $M_\Gamma$ endowed with $\tau$ by giving closed formulas for the generating functions $F_{(M_\Gamma,\tau)}^\pm(z):=\sum_{k\ge 0} \mult_{(M_\Gamma,\tau)}(\pm\lambda_k)\, z^k$ which read
\begin{equation}\label{eq1:F^pm}
F_{(M_\Gamma,\tau)}^\pm(z) =
  \frac{1}{|\Gamma|} \sum_{\gamma\in\Gamma}
  \frac{\chi^\mp(\tau(\gamma)) - z\cdot\chi^\pm(\tau(\gamma))} {\det(1_{2m}-z\gamma)},
\end{equation}
where $\chi^\pm$ denote the half spin characters.
Formula \eqref{eq1:F^pm} can be used to compute the Dirac spectrum in particular cases (see for instance \cite[\S7]{MPT} and \cite[\S9--10]{Teh}).
In general, however, it does not suit the task.

Our goal is to give an alternative and explicit description of the Dirac spectrum on lens spaces.
The approach is representation theoretic in nature, essentially using Frobenius reciprocity.
In fact it is the same as the one used in \cite{LMR}, where R.~Miatello, J.P.~Rossetti and the second named author treated the Laplace operator case, obtaining an explicit description of the spectrum in terms of one-norm lengths of certain associated congruence lattices and also a geometric characterization of isospectrality.

To each $(2m-1)$-dimensional lens space $L:=L(q;s_1,\dots,s_m)$ (see Subsection \ref{subsec:lens} for notation) admitting a spin structure $\tau$, we associate an \emph{affine congruence lattice} $\mathcal L$ (see Lemma~\ref{lem4:dimV-lens}) inside the affine lattice $\ZZ^m:=(\tfrac12,\dots,\tfrac12)+\Z^m$.
For example, when the order $q$ of the fundamental group is odd, $L$ admits only one spin structure $\tau$ and its associated affine congruence lattice is
\begin{equation}\label{eq1:mathcalL}
\mathcal L=\left\{\tfrac12(a_1,\dots,a_m)\in \ZZ^m: a_1s_1+\dots+a_ms_m\equiv 0\pmod q\right\}.
\end{equation}
The main result, Theorem~\ref{thm4:dimV-lens}, states that
\begin{align}\label{eq1:multiplicity}
\mult_{(L,\tau)}(-\lambda_k) &= \sum_{r=0}^k \tbinom{r+m-2}{m-2} N_{\mathcal L}(r,k-r),\\
\mult_{(L,\tau)}(+\lambda_k) &= \sum_{r=0}^k \tbinom{r+m-2}{m-2} N_{\mathcal L}(r+1,k-r), \notag
\end{align}
where $N_{\mathcal L}(r,k)$ denotes the number of tuples $\mu=\tfrac12(a_1,\dots,a_m)\in\mathcal L$ whose one-norm equals $k+\tfrac{m}{2}$ (i.e.\ $\norma{\mu}:=\tfrac12\sum |a_j|=k+\tfrac{m}{2}$) and whose number of negative entries is congruent to $r$ modulo $2$.

As a consequence of this formula, we obtain a characterization of Dirac isospectrality in the class of lens spaces (Corollary~\ref{cor4:char-lens}).
Namely, two lens spaces with fixed spin structures have the same Dirac spectra if and only if their associated affine congruence lattices $\mathcal L$ and $\mathcal L'$ are \emph{oriented $\norma{\cdot}$-isospectral}, that is, $N_{\mathcal L}(\epsilon,k)=N_{\mathcal L'}(\epsilon,k)$ for every $k\geq0$ and every $\epsilon=0,1$.

This characterization allows us to find families of Dirac isospectral lens spaces.
We present in Theorem~\ref{thm5:increasing} an infinite sequence of growing families of lens spaces which are pairwise non-isometric and Dirac isospectral.
Theorem~\ref{thm5:q-odd} gives an infinite sequence of pairs of $7$-dimensional non-isometric lens spaces which are Dirac isospectral.
When $q$ and $m$ are even, lens spaces have two inequivalent spin structures and Theorem~\ref{thm5:q-even} gives an infinite sequence of $7$-dimensional lens spaces such that their two spin structures produce the same Dirac spectra.

We also prove a finiteness result (Proposition~\ref{prop6:finiteness}) that allows for the previous characterizations to be implemented in a computer program.
We associate to each lens space endowed with a spin structure finitely many numbers in such a way they determine its Dirac spectrum.
In particular, one has to check only finitely many equations to decide whether two lens spaces are Dirac isospectral.
As a consequence, Corollary~\ref{cor6:finiteness-eigenv} ensures that two spin lens spaces $(L,\tau)$ and $(L',\tau')$ are Dirac isospectral if and only if their multiplicities $\mult_{(L,\tau)}(\pm\lambda_k)$ and $\mult_{(L',\tau')}(\pm\lambda_k)$ coincide for all $k<mq$, where $q$ is the order of the fundamental groups and $2m-1$ is the dimension of $L$ and $L'$.

Note that, for a given dimension and a given order of the fundamental group, both obstructions for isospectrality, there are only finitely many isometry classes of (oriented) lens spaces (with a fixed spin structure).
Together with the finiteness result mentioned above, one can therefore systematically search for Dirac isospectral lens spaces with the help of a computer, as we have done.
The corresponding results (see Table~\ref{table:examples}) can be found in Section~\ref{sec:comp} together with a discussion of some phenomena that occur.
The most important one is that we have not found any pair of non-isometric Dirac isospectral lens spaces in dimensions $n\equiv 1\pmod4$.
The authors believe that this is always the case (see Conjecture~\ref{conj6:dim5}).
Furthermore, in dimension $7$, most of the Dirac isospectral pairs are also $p$-isospectral for all $p$, though there are exceptions, which Remark~\ref{rem6:dim7} points out.

There are examples of Dirac isospectral spin Riemannian manifolds in the literature, but not in the class of lens spaces.
Actually, to our best knowledge, the following list contains all of the non-trivial examples: \cite{Bar96}, \cite{AB}, \cite{MP06} (cf.\ \cite[\S6.1]{Ginoux} and \cite{Podesta06}).
B\"ar pointed out in \cite[Cor.~2]{Bar96} that the usual construction of Laplace isospectral manifolds by using almost conjugate subgroups (known as ``generalized Sunada method'') also works for Dirac operators on spherical space forms admitting spin structures.
This method cannot be applied for non-isometric lens spaces since cyclic almost conjugate subgroups are necessarily conjugate, thus the associated manifolds are isometric (cf.\ \cite[p.~79]{Bar96}).
Moreover, our examples are fundamentally different from the ones produced by this method since its resulting manifolds are always strongly isospectral (isospectral for every natural differential operator on natural bundles) while it is well known that strongly isospectral lens spaces are necessarily isometric (see for instance \cite[Prop.~7.2]{LMR}).

The paper is organized as follows.
In Section~\ref{sec:preliminaries} we recall two well known facts on spin geometry, namely, the Dirac spectrum of spherical space forms and the spin structures admitted by lens spaces.
We then introduce the notion of an affine congruence lattice associated to a lens space in Section~\ref{sec:congafflattices}, showing that there is an isometry between lens spaces that relates their spin structures if and only if there is an one-norm preserving isometry between the associated affine congruence lattices.
In Section~\ref{sec:spectra} we describe the Dirac spectrum of a spin lens space and give a characterization of Dirac isospectrality in terms of one-norm isospectrality of affine congruence lattices.
By using the previous characterization, we give in Section~\ref{sec:families} two infinite sequences of pairs of Dirac isospectral lens spaces.
We end in Section~\ref{sec:comp} with a computational approach to isospectral Dirac lens spaces and some remarks about the results obtained.

\section*{Acknowledgements}
A large part of the research for this paper was carried out during two stays at the Mathematisches Forschungsinstitut Oberwolfach.
Both authors wish to thank MFO for the funding, its hospitality and excellent working conditions.
The first named author wishes to express his gratitude for the hospitality of Universidad Nacional de C\'ordoba (Argentina) during a stay in June-July 2014.
The second named author wishes to thank the support of the Oberwolfach Leibniz Fellow programme in May-July 2013 and in August-November 2014.
Furthermore, the authors wish to thank Dorothee Sch\"uth and Roberto Miatello for fruitful discussions.

\section{Preliminaries}\label{sec:preliminaries}
In this section we review well known facts about the Dirac operator on spherical space forms.
We give a description of the Dirac spectrum of an arbitrary spin spherical space.
We use the same representation theoretic technique already used by Sulanke~\cite{Sulanke}.
Note that this approach is different from the one in \cite{Bar96} which uses Killing spinors.
We end the section introducing lens spaces and the classification of their spin structures.

The Dirac operator is different from the usual natural differential operators. It depends on a spin structure which need not exist, nor need it be unique. This dependence is non-trivial, i.e.\ in general the spectrum changes with a change of the spin structure, see for example \cite{spinstrsurvey}. If, however, a manifold carries an orientation preserving isometry that takes one spin structure to another, the spectra of the associated Dirac operators are the same. Therefore, if one is looking for a manifold which is Dirac isospectral to itself when equipped with two inequivalent spin structures, one has to check the non-existence of such an isometry for the isospectrality to be non-trivial.
We therefore present a ``spin version'' of the well-known isometry classification of lens spaces (see Proposition~\ref{prop2:lens-isom}) in Proposition~\ref{prop2:lens-isom-explicit}.

In addition to the dependence on a spin structure, the Dirac operator also depends on an orientation. A change of orientation results in the Dirac operator becoming its own negative and hence flipping its spectrum around zero. When in search of Dirac isospectral manifolds, one therefore has to compare spectra modulo orientations, i.e.\ modulo reflection. We thus also state conditions for an isometry between lens spaces to be orientation preserving (respectively reversing) in Proposition~\ref{prop2:lens-isom-explicit}.

\subsection{Dirac spectrum of spherical spaces forms}
We denote by $S^n$ the $n$-dimensional sphere of constant curvature one.
A spherical space form is a manifold of the form $\Gamma\ba S^n$ where $\Gamma$ is a finite group of isometries that acts freely.
For the convenience of the reader we  briefly recall the computation of the Dirac spectrum on spherical space forms (see Sulanke~\cite{Sulanke} and B\"ar~\cite{Bar96}).
We will restrict to the odd dimensional case $n=2m-1$ since the only manifold properly covered by $S^{2m}$ is $\mathbb P\R^{2m}$, which is not orientable and so in particular not spin.

The sphere $S^{2m-1}$ has a symmetric space structure as $G/K$ with $G=\Spin(2m)$ and $K=\{g\in G: g\cdot e_{2m}=e_{2m}\}\simeq\Spin(2m-1)$.
The $\SO(2m-1)$--bundle of oriented orthonormal frames of $S^{2m-1}$ is given by $\SO(2m)$ with projection onto the last column and multiplication by the structure group from the right on the remaining $2m-1$ columns. A spin structure is thus given by $\Theta:\Spin(2m)\to\SO(2m)$, the universal covering homomorphism, and since $S^{2m-1}$ is simply connected, it is also the unique spin structure of the sphere up to equivalence.

Let $\rho:K\to\GL(\Sigma_{2m-1})$ be the spinor representation.
Here $\Sigma_{2m-1}$ is a $2^{m-1}$-dimensional complex vector space.
The group $\Spin(2m)$ acts from the left on the Hilbert space $L^2(S^{2m-1},\Sigma_{2m-1})$ of $L^2$-sections of the spinor bundle over $S^{2m-1}$ by $(g\cdot f)(x)=f(g^{-1}x)$.
By Frobenius reciprocity, this representation decomposes into the direct sum
\begin{equation}\label{eq2:L^2-decompS^n}
L^2(S^{2m-1},\Sigma_{2m-1})
    = \sum_{\pi\in \widehat G} V_\pi \otimes\Hom_{K} (V_\pi,\Sigma_{2m-1}),
\end{equation}
where $\widehat G$ denotes the class of irreducible representations of $G=\Spin(2m)$, $g\in G$ acts on each $V_\pi \otimes\Hom_{K} (V_\pi,\Sigma_{2m-1})$ by $g\cdot (v\otimes A)= \pi(g)v\otimes A$, and $v\otimes A\in V_\pi \otimes\Hom_{K} (V_\pi,\Sigma_{2m-1})$ induces the section $gK\mapsto A\circ\pi(g^{-1})(v)$ (see for instance \cite[Thm.~2]{Bar92}).

We now turn to spherical space forms, quotients $\Gamma\ba S^{2m-1}$ of the sphere by a finite group of isometries acting freely.
Consider the induced action of $\Gamma$ on $\SO(S^{2m-1})\simeq\SO(2m)$. If this action lifts to the spin structure $\Spin(2m)$ of $S^{2m-1}$, then the quotient by this action is a spin structure of $\Gamma\ba S^{2m-1}$ (see \cite[Prop.~1.4.2]{Ginoux}). Furthermore, every spin structure is of this form by \cite[Ch.~2.2]{Friedrich}. Since the bundle of positively oriented orthonormal frames as well as the spin structure of $S^{2m-1}$ are groups themselves, group actions of $\Gamma$ on $\Spin(2m)$ are in 1-1 correspondence with group homomorphisms $\tau:\Gamma\to\Spin(2m)$ such that $\Theta\circ\tau=\Id_\Gamma$.

The $L^2$-spinor bundle on $\Gamma\ba S^{2m-1}$ corresponds to $\tau(\Gamma)$-invariant elements in $L^2(S^{2m-1},\Sigma_{2m-1})$, thus
\begin{equation}\label{eq2:L^2-decompS^n}
L^2(\Gamma\ba S^{2m-1},\Sigma_{2m-1})
    = \sum_{\pi\in \widehat G} V_\pi^{\tau(\Gamma)} \otimes\Hom_{K} (V_\pi,\Sigma_{2m-1}),
\end{equation}
where $V_\pi^{\tau(\Gamma)} = \{v\in V_\pi: (\pi\circ\tau)(\gamma)v=v\quad\forall \gamma\in\Gamma\}$.

Odd dimensional spherical space forms are always orientable since any discrete subgroup $\Gamma$ of the isometry group $\Ot(2m)$ of $S^{2m-1}$ acting without fixed points is already a subgroup of $\SO(2m)$.
On $\Gamma\ba S^{2m-1}$, we will always consider the inherited orientation from the previously chosen orientation on $S^{2m-1}$.

Let $\mathfrak g=\spin(2m)$ and $\mathfrak k\simeq\spin(2m-1)$, the Lie algebras of $G$ and $K$ respectively.
It is well known that $\mathfrak g=\sum_{1\leq i<j\leq 2m} \R e_ie_j$ and $\mathfrak k=\sum_{1\leq i<j\leq 2m-1} \R e_ie_j$ as subspaces of the Clifford algebra $\mathfrak{Cl}(\R^{2m})$, where $\{e_1,\dots,e_{2m}\}$ denotes the canonical basis of $\R^{2m}$.
We consider the Cartan decomposition $\mathfrak g=\mathfrak k\oplus\mathfrak p$ and the inner product $\langle\cdot,\cdot\rangle$ on $\mathfrak g$ given by a multiple of the Killing form in such a way that its restriction to $\mathfrak p$ induces the round metric on $S^{2m-1}$ with constant curvature one.
It turns out that $\{X_k:=\frac{1}{2}e_ke_{2m}:1\leq k\leq 2m-1\}$ is an orthonormal basis of $\mathfrak p$.

We fix the standard maximal torus in $G$,
\begin{equation}\label{eq2:max_torus}
T=\left\{g=
\textstyle\sum_{j=1}^{m} \big(\cos\theta_j+\sin \theta_j \, e_{2j-1}e_{2j} \big) \in G:
\theta_1,\dots,\theta_m\in\R
\right\}.
\end{equation}
Its Lie algebra is
\begin{equation}\label{eq2:cartan}
\mathfrak t = \left\{X=\textstyle\sum_{j=1}^m \theta_j e_{2j-1}e_{2j}\in\spin(2m): \theta_1,\dots,\theta_m\in\R \right\}.
\end{equation}
Note that $g=\exp(X)$ and
\begin{equation}
  \Theta(g)= \diag\left(
\left[\begin{smallmatrix}\cos(2\theta_1)&-\sin(2\theta_1) \\ \sin(2\theta_1)&\cos(2\theta_1)
\end{smallmatrix}\right]
,\dots,
\left[\begin{smallmatrix}\cos(2\theta_m)&-\sin(2\theta_m) \\ \sin(2\theta_m)&\cos(2\theta_m)
\end{smallmatrix}\right]
\right)\in\SO(2m)
\end{equation}
if $g\in T$ and $X\in \mathfrak t$ are as above.

The Cartan subalgebra $\mathfrak h:=\mathfrak t\otimes_\R \C$ of $\mathfrak g_\C:=\mathfrak g\otimes_\R \C \simeq \so(2m,\C)$ is given as in \eqref{eq2:cartan} with $\theta_1,\dots,\theta_m\in\C$ and, in this case, we let $\varepsilon_j\in\mathfrak h^*$ be given by $\varepsilon_j(X)=2i\theta_j$ for $1\leq j\leq m$.
The weight lattice of $G$ is
\begin{equation}\label{eq2:P(G)}
P(G)=\left\{\textstyle\sum_{j=1}^m a_j\varepsilon_j: a_j\in\Z\;\forall j\quad\text{or} \quad a_j\in\tfrac12+\Z\;\forall j\right\}.
\end{equation}
We fix the standard system of positive roots, thus a weight $\sum_{j=1}^m a_j\varepsilon_j\in P(G)$ is dominant if and only if $a_1\geq\dots\geq a_{m-1}\geq |a_m|$.
We consider in $K$ the maximal torus $T\cap K$, thus the associated Cartan subalgebra is included in $\mathfrak h$.
Under this convention, the weight lattice of $K$ is
\begin{equation}\label{eq2:P(K)}
P(K)=\left\{\textstyle\sum_{j=1}^{m-1} a_j\varepsilon_j: a_j\in\Z\;\forall j\quad\text{or} \quad a_j\in\tfrac12+\Z\;\forall j\right\}
\end{equation}
and $\sum_{j=1}^{m-1}a_j\varepsilon_j\in P(K)$ is dominant if and only if $a_1\geq\dots \geq a_{m-1}\geq0$.
By the highest weight theorem, since $G=\Spin(2m)$ and $K\simeq\Spin(2m-1)$ are simply connected compact Lie groups, the irreducible unitary representations of $G$ and $K$ are in correspondence with the dominant elements in $P(G)$ and $P(K)$ respectively.
For example, the spinor representation $\rho$ of $K$ has highest weight $\frac12(\varepsilon_1+\dots+\varepsilon_{m-1})$.
For $\Lambda\in P(G)$  dominant, we denote by $\pi_\Lambda$ the associated representations in $\widehat G$.

\begin{proposition}\label{prop2:spectrum}
Let $M_\Gamma:=\Gamma\ba S^{2m-1}$ be a spherical space form with spin structure induced by $\tau:\Gamma\to\Spin(2m)$.
For each non-negative integer $k$, let $\pi_{k}^\pm$ be the irreducible representations of $\Spin(2m)$ with highest weights
$$
\Lambda_k^\pm := \tfrac12\big((2k+1)\varepsilon_1+\varepsilon_2+\dots+\varepsilon_{m-1}\pm\varepsilon_m\big).
$$
Then, the eigenvalues of the Dirac operator $D$ on the spinor bundle of $M_\Gamma$ are $\pm \lambda_k$ with $\lambda_k:=k+\frac{2m-1}{2}$, with multiplicity
$$
\mult_{(M_\Gamma,\tau)}(\pm\lambda_k) :=\dim V_{\pi_k^\mp}^{\tau(\Gamma)}.
$$
\end{proposition}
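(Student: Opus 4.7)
The plan is to combine the Frobenius reciprocity decomposition \eqref{eq2:L^2-decompS^n} with a branching rule and a Casimir-based eigenvalue computation. Because the Dirac operator $D$ commutes with the left $G$-action on sections, it preserves each summand $V_\pi^{\tau(\Gamma)}\otimes\Hom_K(V_\pi,\Sigma_{2m-1})$, and by Schur it acts as $\Id_{V_\pi^{\tau(\Gamma)}}\otimes T_\pi$ for some endomorphism $T_\pi$ of the multiplicity space. Thus determining the spectrum reduces to identifying those $\pi\in\widehat G$ with $\Hom_K(V_\pi,\Sigma_{2m-1})\neq 0$, computing the dimension of that space, and diagonalizing $T_\pi$.

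The first two tasks are resolved by branching. By Frobenius reciprocity, $\dim\Hom_K(V_\pi,\Sigma_{2m-1})$ equals the multiplicity of the spinor representation $\rho$ (with highest weight $\tfrac12(\varepsilon_1+\cdots+\varepsilon_{m-1})$) in $\pi|_K$. The classical $\Spin(2m)\downarrow\Spin(2m-1)$ branching rule says that a dominant weight $\sum a_j\varepsilon_j$ restricts to the sum, each with multiplicity one, of the $\Spin(2m-1)$-irreducibles with dominant weights $\sum b_j\varepsilon_j$ satisfying the interlacing $a_1\ge b_1\ge a_2\ge\cdots\ge a_{m-1}\ge b_{m-1}\ge |a_m|$. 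Forcing $b_j=\tfrac12$ for all $j$ yields $a_2=\cdots=a_{m-1}=\tfrac12$ and $|a_m|=\tfrac12$; the half-integrality of $\Lambda$ then forces $a_1=\tfrac{2k+1}{2}$ for some $k\ge 0$. Hence the contributing representations are exactly $\pi_k^\pm$, each with one-dimensional Hom-space.

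The third task is the computation of the scalars $T_{\pi_k^\pm}$. I would invoke Parthasarathy's formula $D^2=\pi(C_G)+\tfrac{\text{scal}}{8}-c_\rho$, where $c_\rho$ is the Casimir eigenvalue of $\rho$ on $K$; with $\rho_G=\sum_{j=1}^m(m-j)\varepsilon_j$ and the metric normalization fixed in Section~\ref{sec:preliminaries}, substituting $\Lambda_k^\pm$ into $\|\Lambda+\rho_G\|^2-\|\rho_G\|^2$ yields $D^2=\lambda_k^2$, so $T_{\pi_k^\pm}=\pm\lambda_k$. Matching signs I would do by evaluating $D=\sum_k X_k\cdot\nabla_{X_k}$ explicitly on a highest-weight section of $\pi_k^\pm$ (the Clifford action being determined by the embedding $\mathfrak p\hookrightarrow\mathfrak{Cl}(\R^{2m})$ of Section~\ref{sec:preliminaries}), or by comparison with the established sphere case in Sulanke~\cite{Sulanke} and B\"ar~\cite{Bar96}. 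This fixes $T_{\pi_k^\mp}=\pm\lambda_k$, so the $(\pm\lambda_k)$-eigenspace of $D$ equals $V_{\pi_k^\mp}^{\tau(\Gamma)}\otimes\Hom_K(V_{\pi_k^\mp},\Sigma_{2m-1})$ and has dimension $\dim V_{\pi_k^\mp}^{\tau(\Gamma)}$.

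The branching rule and the Casimir identity are routine. The main obstacle I expect is the sign matching: assigning $+\lambda_k$ to $\pi_k^-$ rather than $\pi_k^+$ depends on the orientation fixed on $S^{2m-1}$, on the conventions identifying $\Sigma_{2m-1}$ with the $K$-irreducible of highest weight $\tfrac12(\varepsilon_1+\cdots+\varepsilon_{m-1})$, and on the Clifford basis $\{X_k\}$ of $\mathfrak p$. The cleanest resolution is to specialize to $\Gamma=\{1\}$ and compare with the known sphere result in \cite{Sulanke,Bar96}, reducing the sign question to a single already-computed case.
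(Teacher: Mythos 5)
Your first two steps coincide with the paper's proof: the decomposition \eqref{eq2:L^2-decompS^n}, the $\Spin(2m)\downarrow\Spin(2m-1)$ branching argument isolating $\pi_k^\pm$ with one-dimensional $\Hom_K(V_{\pi_k^\pm},\Sigma_{2m-1})$, and $D=\Id\otimes D_\pi$ on each summand are all exactly as in the paper, and replacing the paper's direct evaluation of $D_\pi$ by Parthasarathy's formula is a legitimate alternative for computing the \emph{magnitude} $\lambda_k^2$ of the eigenvalue. The genuine gap is the sign assignment, which you correctly identify as the main obstacle but then propose to resolve in a way that cannot work. Specializing to $\Gamma=\{1\}$ and comparing with the sphere spectrum of Sulanke and B\"ar gives no information about which of $\pi_k^+$, $\pi_k^-$ carries $+\lambda_k$: the sphere's Dirac spectrum is symmetric about zero, $\dim V_{\pi_k^+}=\dim V_{\pi_k^-}$, so both possible assignments ($\pi_k^\mp\leftrightarrow\pm\lambda_k$ and $\pi_k^\pm\leftrightarrow\pm\lambda_k$) produce the identical sphere spectrum $\pm\lambda_k$ with multiplicity $2^{m-1}\binom{k+2m-2}{2m-2}$. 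Moreover Sulanke computes the spectrum of $D^2$, which carries no sign at all. What is needed is the finer statement that the $\mp\lambda_k$-eigenspace is the $\pi_k^\pm$-isotypic component of $L^2(S^{2m-1},\Sigma_{2m-1})$, and that is precisely what the raw sphere spectrum does not encode.

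This sign is not a harmless convention here: for general $\Gamma$ one has $\dim V_{\pi_k^+}^{\tau(\Gamma)}\neq\dim V_{\pi_k^-}^{\tau(\Gamma)}$, and interchanging the two turns the asserted multiplicities into those of the ``inverse'' spectrum (cf.\ the remark following Corollary~\ref{cor4:char-lens}), so the proposition would simply be wrong with the opposite assignment. Your other suggested route --- evaluating $D_\pi$ on an explicit vector --- is the correct one and is what the paper does: it realizes $\pi_k^\pm\subset\pi_{k\varepsilon_1}\otimes\pi_0^\pm$, takes $u=(e_{2m-1}\mp ie_{2m})^k\otimes v$ with $v$ a highest weight vector of $\pi_0^\pm$, and computes $-\sum_j e_j\cdot A\circ d\pi_k^\pm(X_j)(u)=\mp(k+m-\tfrac12)A(u)$, tracking the Clifford multiplication convention on $\Sigma_{2m-1}$. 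But in your proposal this computation is only announced, not carried out, and it is exactly where the content (and all the orientation and Clifford-algebra convention dependence you mention) resides; deferring it to the $\Gamma=\{1\}$ comparison leaves the sign, and hence the proposition as stated, unproved.
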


\begin{proof}
Let $\Lambda=\sum_{j=1}^m a_j\varepsilon_j\in P(G)$ dominant.
By the classical branching law from $\Spin(2m)$ to $\Spin(2m-1)$ (see for instance \cite[Thm.~8.1.4]{GoodmanWallach}), $\Hom_{K} (V_{\pi_\Lambda},\Sigma_{2m-1})\neq0$ if and only if $a_j\in\frac12+\Z$ for all $j$ and
$$
a_1\geq \tfrac12\geq a_2\geq\dots\geq a_{m-1}\geq \tfrac12\geq |a_m|.
$$
Moreover $\Hom_{K} (V_\pi,\Sigma_{2m-1})$ has dimension one.
Hence, the sum in \eqref{eq2:L^2-decompS^n} can be restricted to $\{\pi_k^\pm :k\geq0\}$.

The Dirac operator $D$ commutes with the action of $G$, thus $D$ leaves invariant $V_\pi^{\tau(\Gamma)} \otimes\Hom_{K} (V_\pi,\Sigma_{2m-1})$ in \eqref{eq2:L^2-decompS^n} for each $\pi$.
Moreover, from \cite[Prop.~1]{Bar92}, $D$ restricted to $V_\pi^{\tau(\Gamma)} \otimes\Hom_{K} (V_\pi,\Sigma_{2m-1})$ is equal to $\Id\otimes D_\pi$ where
\begin{equation}\label{eq2:D_pi}
D_\pi(A)=-\sum_{j=1}^{2m-1} e_j\cdot A\circ d\pi(X_j).
\end{equation}
Since $\Hom_{K} (V_\pi,\Sigma_{2m-1})$ is one-dimensional for $\pi \in \{\pi_k^\pm :k\geq0\}$, $D_{\pi_k^\pm}$ acts by a scalar $\lambda_k^\pm$ whose multiplicity is $\dim \big(V_{\pi_k^\pm}^{\tau(\Gamma)} \otimes\Hom_{K} (V_{\pi_k^\pm},\Sigma_{2m-1})\big) = \dim V_{\pi_k^\pm}^{\tau(\Gamma)}$.
It remains to show that $\lambda_k^\pm = \mp (k+\frac{2m-1}{2})$.

Denote by $A\in\Hom_{K} \big(V_{\pi_k^\pm},\Sigma_{2m-1})\big)$ the projection onto $\Sigma_{2m-1}$ along the other isotypical summands of $V_{\pi_k^\pm}$.
By the above, $\Hom_{K} (V_{\pi_k^\pm},\Sigma_{2m-1})\big)=\C A$.
It is thus enough to show that
\begin{equation}\label{eq2:D_pi2}
\sum_{j=1}^{2m-1} e_j\cdot A\circ d\pi_k^\pm(X_j)(u) = \pm (k+\tfrac{2m-1}{2})A(u)
\end{equation}
for a vector $u$ that is not in the kernel of $A$.

The representation $\pi_k^{\pm}$ has highest weight $\Lambda_k^\pm=k\varepsilon_1+\Lambda_0^\pm$, thus it is contained in the tensor product $\pi_{k\varepsilon_1}\otimes\pi_0^\pm$.
Here, $\pi_{k\varepsilon_1}$ is the representation of traceless symmetric degree $k$ tensors in $e_1,\ldots,e_{2m}$ and $\pi_0^\pm$ are the positive and negative half-spinor representations of $\Spin(2m)$.

Define
\[
	u=(e_{2m-1}\mp i e_{2m})^k\otimes v \in V_{\pi_{k\varepsilon_1}}\otimes V_{\pi_0^\pm}
\]
where $v$ is a highest weight vector for $\pi_0^\pm$.
The element $u$ lies in the weight space with weight $\pm k\varepsilon_m+\Lambda_0^\pm$ and spans this weight space since it is conjugate under the Weyl group to the weight space with highest weight $\Lambda_k^\pm$.

The element $X_{2m-1}$ is in the Cartan subalgebra $\mathfrak{h}$ of $\spin(2m)$ and $A(u)$ is a highest weight vector of $\Sigma_{2m-1}$, so
$d\pi_k^\pm(X_{2m-1})(u)=(\pm k\varepsilon_m+\Lambda_0^\pm)(X_{2m-1})\, u =\pm(k+\tfrac{1}{2})i \,u.$
Hence,
$$
(e_{2m-1} \cdot A\circ  d\pi_k^\pm (X_{2m-1}))(u) = \pm i(k+\tfrac12) e_{2m-1} \cdot A(u) = \pm (k+\tfrac12) A(u)
$$
since multiplication by $e_{2m-1}$ is $-i$ by the usual choice of Clifford multiplication.

For the remaining terms ($1\leq j\leq 2m-2$) one can check that $e_j\cdot A(d\pi(X_j)(u))=\pm\frac{1}{2}A(u)$ by writing $X_j$ as a suitable linear combination of root vectors of $\so(2m-1)\simeq\spin(2m-1)$ and $\so(2m)\simeq\spin(2m)$.

Combining the terms we obtain
\begin{eqnarray*}
\sum_{j=1}^{2m-1} e_j\cdot A\cdot d\pi(X_j)(u)
    & = & (\pm(2m-2)\tfrac{1}{2}\pm (k+\tfrac{1}{2}))A(u) \\
    & = & \pm (k+m-\tfrac{1}{2})A(u),
\end{eqnarray*}
which establishes \eqref{eq2:D_pi2} and concludes the proof.
\end{proof}

\subsection{Spin structures on lens spaces}\label{subsec:lens}
The manifolds covered by $S^{2m-1}$ with cyclic fundamental group are called \emph{lens spaces}.
They can be described as follows.
We associate to $q\in\N$ and $s_1,\dots,s_m\in\Z$ such that $s_j$ is coprime to $q$ for all $1\leq j\leq m$, the lens space
\begin{equation}\label{eq2:L(q;s)}
L(q;s_1,\dots,s_m) := \langle\gamma\rangle \ba S^{2m-1}
\end{equation}
where
\begin{equation}\label{eq2:gamma}
\gamma=
\diag\left(
\left[\begin{smallmatrix}\cos(2\pi{s_1}/q)&-\sin(2\pi{s_1}/q) \\ \sin(2\pi{s_1}/q)&\cos(2\pi{s_1}/q)
\end{smallmatrix}\right]
,\dots,
\left[\begin{smallmatrix}\cos(2\pi{s_m}/q)&-\sin(2\pi{s_m}/q) \\ \sin(2\pi{s_m}/q)&\cos(2\pi{s_m}/q)
\end{smallmatrix}\right]
\right)\in\SO(2m).
\end{equation}
The element $\gamma$ generates a cyclic group of order $q$ in $\SO(2m)$ that acts freely on $S^{2m-1}$.
The next result is well known (see for instance \cite[Ch.~V]{Co}).

\begin{proposition}\label{prop2:lens-isom}
Let $L=L(q;s_1,\dots,s_m)$ and $L'=L(q;s_1',\dots,s_m')$ be lens spaces.
Then the following assertions are equivalent.
\begin{enumerate}
  \item $L$ is isometric to $L'$.
  \item $L$ is diffeomorphic to $L'$.
  \item $L$ is homeomorphic to $L'$.
  \item There exist $\ell \in\Z$ coprime to $q$, $\epsilon_1,\dots,\epsilon_m \in \{\pm1\}$ and $\sigma$ a permutation of $\{1,\dots,m\}$ such that $\ell \epsilon_{j} s_j\equiv s_{\sigma(j)}'\pmod{q}$.
\end{enumerate}
\end{proposition}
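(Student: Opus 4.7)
The plan is to establish the cycle $(1) \Rightarrow (2) \Rightarrow (3) \Rightarrow (4) \Rightarrow (1)$. The first two implications are general facts (every isometry is a diffeomorphism and every diffeomorphism is a homeomorphism), so no work is required there. The deep implication is $(3) \Rightarrow (4)$: this is the classical homeomorphism (in fact, even homotopy) classification of lens spaces due to Reidemeister, Franz and de~Rham, proved via Reidemeister torsion and, at the homotopy level, via the linking form on $H_1$. Since this is well documented, I would simply invoke it from the reference \cite{Co} rather than reprove it, marking it as the main ingredient borrowed from the literature.

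For $(4) \Rightarrow (1)$, I would produce an explicit isometry. Identify $\R^{2m}$ with $\C^m$ so that the generator $\gamma$ of the deck transformation group of $L$ acts diagonally by $(z_1,\dots,z_m)\mapsto(\zeta^{s_1}z_1,\dots,\zeta^{s_m}z_m)$ with $\zeta=e^{2\pi i/q}$, and similarly for $\gamma'$ with exponents $s_j'$. Given data $(\ell,\epsilon_1,\dots,\epsilon_m,\sigma)$ satisfying the congruences, define a linear map $\phi:\C^m\to\C^m$ by sending the $j$-th coordinate to the $\sigma(j)$-th coordinate of the image, applying complex conjugation precisely when $\epsilon_j=-1$. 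Then $\phi$ lies in $\Ot(2m)$, hence preserves the round metric on $S^{2m-1}$. A direct computation shows $\phi\circ\gamma^{\ell}\circ\phi^{-1}=\gamma'$, because conjugation by the coordinate permutation realizes $\sigma$ on the exponents, complex conjugation in the $j$-th slot turns the exponent $\ell s_j$ into $-\ell s_j=\ell\epsilon_j s_j$ (when $\epsilon_j=-1$), and raising $\gamma$ to the $\ell$-th power replaces each $s_j$ by $\ell s_j$. Since $\gcd(\ell,q)=1$, we have $\langle\gamma^\ell\rangle=\langle\gamma\rangle$, so $\phi$ normalises the deck group and descends to a well-defined isometry $L\to L'$.

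The main obstacle is of course $(3) \Rightarrow (4)$: it is not elementary and forces one to rely on the full strength of the classical lens-space classification. By contrast, $(4) \Rightarrow (1)$ is essentially bookkeeping once the correct candidate isometry has been written down, and the only subtle point is verifying that the orthogonal transformation built from $\sigma$ and the $\epsilon_j$ interacts with the exponent $\ell$ in exactly the right way to conjugate $\gamma^\ell$ to $\gamma'$. Since in later sections (e.g.\ Proposition~\ref{prop2:lens-isom-explicit}) the authors will need to track orientation and spin-structure behaviour of this isometry, it is worth writing $\phi$ in a form explicit enough that $\det\phi$ and its preimage in $\Pin(2m)$ can be read off; but for the present statement only the existence of $\phi$ is needed.
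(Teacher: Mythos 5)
Your proposal is sound, and in fact the paper offers no proof of this proposition at all: it is stated as well known with a reference to \cite[Ch.~V]{Co}, so there is no internal argument to deviate from. Your treatment is the standard one and is consistent with what the paper does implicitly: the implications $(1)\Rightarrow(2)\Rightarrow(3)$ are formal, the hard implication $(3)\Rightarrow(4)$ is the Reidemeister--Franz--de~Rham classification via Reidemeister torsion and must indeed be quoted from the literature, and your explicit map $\phi$ for $(4)\Rightarrow(1)$ is exactly the isometry $\Psi$ of \eqref{eq2:Psi} that the paper constructs and analyses later in Proposition~\ref{prop2:lens-isom-explicit}; your verification that $\phi\circ\gamma^{\ell}\circ\phi^{-1}=\gamma'$ and that $\langle\gamma^\ell\rangle=\langle\gamma\rangle$ (so $\phi$ carries the deck group of $L$ onto that of $L'$ and descends) is correct and is the same computation as the paper's relation \eqref{eq2:generator-simil}.

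One inaccuracy should be removed: the parenthetical claim that condition (4) also follows from homotopy equivalence is false. The homotopy classification (via the linking form, or Whitehead's theorem) is governed by the strictly weaker condition $\ell^m s_1\cdots s_m\equiv\pm s_1'\cdots s_m'\pmod q$; for instance $L(7;1,1)$ and $L(7;1,2)$ are homotopy equivalent but satisfy no relation of the form in (4) and are not homeomorphic. Since your chain only uses $(3)\Rightarrow(4)$, which genuinely requires the torsion invariant, the proof is unaffected, but the aside overstates what the homotopy-level classification gives.
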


We next describe the spin structures admitted by the lens space $L=L(q;s_1,\dots,s_m)=\Gamma\ba S^{2m-1}$.
If $q$ is odd, let $\tau:\Gamma\to\Spin(2m)$ be given by
\begin{equation}\label{eq2:spin-str-odd}
  \tau(\gamma^k) = \prod_{j=1}^m \left(\cos\left(\tfrac{(q+1)k s_j\pi}{q}\right) + \sin\left(\tfrac{(q+1)k s_j\pi}{q}\right) e_{2j-1}e_{2j}\right).
\end{equation}
One can check that $\Theta\circ\tau=\Id_\Gamma$, thus $\tau$ defines a spin structure on $L$.
We next assume $q$ and $m$ even.
Sometimes we will abbreviate $s=(s_1,\dots,s_m)$.
Let $h_{q;s}=\sum_j \lfloor\tfrac{s_j}{q}\rfloor$.
For $h\in\Z$, we let $\tau_h:\Gamma\to\Spin(2m)$ be given by
\begin{equation}\label{eq2:spin-str-even}
\tau_h(\gamma^k)
= (-1)^{k\left(h+h_{q;s}\right)} \; \prod_{j=1}^m \left(\cos(\tfrac{k s_j\pi}{q}) + \sin(\tfrac{k s_j\pi}{q})\, e_{2j-1}e_{2j}\right).
\end{equation}
Again we have that $\Theta\circ\tau_h=\Id_\Gamma$, thus $\tau_{h}$ defines a spin structure on $L$.
Clearly $\tau_h$ depends only on the parity of $h$.

\begin{remark}
We include the term $(-1)^{h_{q;s}}$ in \eqref{eq2:spin-str-even} since we want to associate the spin structure to a lens space independently of its parameters.
To be more precise, suppose that we define the spin structure as in \eqref{eq2:spin-str-even} without the term $(-1)^{h_{q;s}}$.
Then, the lens spaces $L=L(q;s_1,\dots,s_m)$ and $L'=L(q;s_1+q,s_2,\dots,s_m)$ coincide by definition, but the spin structures are switched, that is, $\tau_0=\tau_1'$ and $\tau_1=\tau_0'$.
\end{remark}

The following result due to Franc~\cite{Franc} classifies the spin structures on lens spaces.

\begin{theorem}\label{thm2:spin-structures}
Let $L=L(q;s_1,\dots,s_m)$.
If $q$ is odd, $L$ admits (up to equivalence) only one spin structure which is induced by \eqref{eq2:spin-str-odd}.
If $q$ is even, $L$ does not admit a spin structure when $m$ is odd, and admits two inequivalent spin structures when $m$ is even, which are induced by $\tau_0$ and $\tau_1$ as in \eqref{eq2:spin-str-even}.
\end{theorem}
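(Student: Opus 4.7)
The plan is to use the bijection, recalled in the text preceding the theorem, between spin structures on $L=\Gamma\ba S^{2m-1}$ and homomorphisms $\tau\colon\Gamma\to\Spin(2m)$ with $\Theta\circ\tau=\Id_\Gamma$. Since $\Gamma=\langle\gamma\rangle$ is cyclic of order $q$, such a $\tau$ is uniquely determined by its value $\tau(\gamma)\in\Theta^{-1}(\gamma)\subset\Spin(2m)$, subject only to $\tau(\gamma)^q=1$. The problem therefore reduces to (i) writing down both elements of the fibre $\Theta^{-1}(\gamma)$ explicitly, (ii) computing their $q$-th powers inside $\Spin(2m)$, and (iii) reading off, according to the parities of $q$ and $m$, which signs yield a lift and how they match the formulas \eqref{eq2:spin-str-odd} and \eqref{eq2:spin-str-even}.

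For step (i), comparing the block-diagonal form of $\gamma$ in \eqref{eq2:gamma} with the image of the maximal torus in \eqref{eq2:max_torus}, the choice $\theta_j=s_j\pi/q$ produces an element
\[
 g=\prod_{j=1}^{m}\bigl(\cos(s_j\pi/q)+\sin(s_j\pi/q)\,e_{2j-1}e_{2j}\bigr)\in\Spin(2m)
\]
with $\Theta(g)=\gamma$, so $\Theta^{-1}(\gamma)=\{\pm g\}$ because $\ker\Theta=\{\pm1\}$. For step (ii), the factors for distinct indices $j$ commute, and each obeys the Clifford identity $(\cos\theta+\sin\theta\,e_{2j-1}e_{2j})^q=\cos(q\theta)+\sin(q\theta)\,e_{2j-1}e_{2j}$; hence
\[
 g^q=\prod_{j=1}^{m}\bigl(\cos(s_j\pi)+\sin(s_j\pi)\,e_{2j-1}e_{2j}\bigr)=(-1)^{s_1+\dots+s_m}.
\]

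Step (iii) is a case analysis. If $q$ is odd, then $(\pm g)^q=\pm g^q$, so exactly one sign gives a lift and $L$ has a unique spin structure. To match \eqref{eq2:spin-str-odd}, expand $(q+1)s_j\pi/q=s_j\pi+s_j\pi/q$: the shift by $s_j\pi$ multiplies each factor by $(-1)^{s_j}$, so the displayed product evaluated at $k=1$ equals $(-1)^{s_1+\dots+s_m}g$, precisely the valid lift. If $q$ is even, coprimality forces each $s_j$ odd, hence $g^q=(-1)^m$ and $(\pm g)^q=(-1)^m$; this equals $-1$ when $m$ is odd (no lift exists) and $+1$ when $m$ is even (both signs $\pm g$ are lifts). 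Evaluating \eqref{eq2:spin-str-even} at $k=1$ gives $\tau_h(\gamma)=(-1)^{h+h_{q;s}}g$, realising the two lifts as $h$ ranges over $\{0,1\}$; the auxiliary factor $(-1)^{h_{q;s}}$ is there precisely so that the assignment $h\mapsto\tau_h$ depends only on the isometry class of $L$, not on the chosen representatives $s_j\pmod q$, as explained in the remark preceding the theorem.

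Finally, any two distinct lifts $\tau\neq\tau'$ yield inequivalent spin structures: a bundle equivalence over the identity of the frame bundle lifts to a right-$\Spin(2m)$-equivariant self-map of $\Spin(2m)$ of the form $g\mapsto h_0g$ with $\Theta(h_0)=1$, so $h_0=\pm1$ is central, and $\Gamma$-equivariance then forces $\tau'(\gamma)=h_0\tau(\gamma)h_0^{-1}=\tau(\gamma)$. The main obstacle I anticipate is not the computation of $g^q$, which is routine Clifford algebra, but the careful sign bookkeeping needed to align the two abstract lifts $\pm g$ with the explicit formulas \eqref{eq2:spin-str-odd} and \eqref{eq2:spin-str-even}, especially the $(-1)^{h_{q;s}}$ correction that makes the labelling of the two spin structures canonical.
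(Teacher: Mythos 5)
Your argument is correct, but note that the paper itself does not prove this theorem: it attributes the classification to Franc~\cite{Franc} and only asserts (``one can check'') that the displayed maps satisfy $\Theta\circ\tau=\Id_\Gamma$. What you supply is a self-contained derivation from the correspondence, recalled in the paper before the theorem (with references to Friedrich and Ginoux), between spin structures on $\Gamma\ba S^{2m-1}$ and homomorphisms $\tau:\Gamma\to\Spin(2m)$ with $\Theta\circ\tau=\Id_\Gamma$: the explicit lift $g$, the Clifford computation $g^q=(-1)^{s_1+\cdots+s_m}$, the parity case analysis, and the matching with \eqref{eq2:spin-str-odd} and \eqref{eq2:spin-str-even} are exactly what the quoted classification amounts to in this cyclic situation, so your route buys self-containedness where the paper buys brevity by citation. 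Two points in your inequivalence step deserve tightening, though neither affects the conclusion. First, the structure group acting on the right of the spin structures is $K\simeq\Spin(2m-1)$, not $\Spin(2m)$; the cleanest way to see that the lifted map has the form $g\mapsto h_0g$ is that it is a deck transformation of the regular covering $\Spin(2m)\to\Gamma\ba\SO(2m)$, whose deck group is $\Theta^{-1}(\Gamma)$ acting by left translations. Second, this only gives $h_0\in\Theta^{-1}(\Gamma)$, not $\Theta(h_0)=1$: a map covering the identity of $\Gamma\ba\SO(2m)$ need not cover the identity of $\SO(2m)$. You can either normalize $h_0$ into $\ker\Theta=\{\pm1\}$ by composing with a deck transformation of $\Spin(2m)\to\tau'(\Gamma)\ba\Spin(2m)$ (left translation by an element of $\tau'(\Gamma)$), or skip the normalization entirely by observing that $\Theta^{-1}(\Gamma)=\langle -1,\tau(\gamma)\rangle$ is abelian; in either case the descent condition $h_0\tau(\gamma)h_0^{-1}\in\tau'(\Gamma)$ together with $\Theta\circ\tau'=\Id_\Gamma$ forces $\tau'(\gamma)=\tau(\gamma)$, exactly as you conclude. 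With these small repairs your proof is complete and is, in substance, the standard argument behind the cited result.
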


From now on, unless otherwise stated, we assume that the spin structures on $L(q;s_1,\dots,s_m)$ are induced by $\tau$ as in \eqref{eq2:spin-str-odd} or \eqref{eq2:spin-str-even} depending on the parity of $q$.
Moreover, for simplicity, we shall simply indicate the spin structure by $\tau$, in place of writing ``the spin structure induced by $\tau$''.

For the convenience of the next sections, we will give a more explicit version of Proposition~\ref{prop2:lens-isom}, giving a particular isometry and what it does to the spin structures.
We will use the identification
$$
S^{2m-1} = \{(z_1,\dots,z_m)\in\C^m: |z_1|^2+\dots+|z_m|^2 =1\},
$$
thus an element $g=\diag\left(\left[\begin{smallmatrix}\cos(\theta_1)&-\sin(\theta_1) \\ \sin(\theta_1)&\cos(\theta_1)
\end{smallmatrix}\right],\dots, \left[\begin{smallmatrix}\cos(\theta_m)&-\sin(\theta_m) \\ \sin(\theta_m)&\cos(\theta_m)
\end{smallmatrix}\right]\right)$ in the maximal torus $T$ of $\SO(2m)$ acts by
$$
g\cdot (z_1,\dots,z_m) = (e^{i\theta_1}z_1,\dots,e^{i\theta_m}z_m).
$$
Hence, the lens space $L=L(q;s_1,\dots,s_m)$ is defined by $S^{2m-1}$ under the relation
$$
(z_1,\dots,z_m) \sim (e^{2\pi i s_1/q}z_1,\dots,e^{2\pi i {s_m}/{q}}z_m).
$$
We usually write $\pi:S^{2m-1}\to L$ for the projection.

For $\sigma$ a permutation of $\{1,\dots,m\}$ and $\epsilon_1,\dots,\epsilon_m\in\{\pm1\}$, we associate the function
\begin{equation}\label{eq2:Psi}
\begin{array}{rcl}
\Psi:S^{2m-1}&\longrightarrow& S^{2m-1} \\
(z_1,\dots,z_m)&\longmapsto&(w_1,\dots,w_m),
\end{array}
\qquad\text{where }\;
w_{\sigma(j)}=
\begin{cases}
  z_{j} &\text{if } \epsilon_{j}=1,\\
  \overline{z_{j}} &\text{if } \epsilon_{j}=-1.
\end{cases}
\end{equation}
Clearly $\Psi$ is an isometry of $S^{2m-1}$.
Furthermore, we can see that each conjugate coordinate in $w_{\sigma(j)}$ changes the orientation, thus $\Psi$ will preserve or reverse one fixed orientation on $S^{2m-1}$ depending on $\epsilon=1$ or $\epsilon=-1$ respectively, where $\epsilon:=\epsilon_1\dots\epsilon_m$.

\begin{proposition}\label{prop2:lens-isom-explicit}
Let $L=L(q;s_1,\dots,s_m)$ and $L'=L(q;s_1',\dots,s_m')$ be lens spaces and $\pi:S^{2m-1}\to L$ and $\pi':S^{2m-1}\to L'$ their projections.
Let $\sigma$ be a permutation of $\{1,\dots,m\}$, let $\epsilon_1,\dots,\epsilon_m\in\{\pm1\}$ and let $\Psi$ be the associated isometry of $S^{2m-1}$ as above.
Then, there is an isometry $\psi:L\to L'$ such that $\pi'\circ\Psi = \psi \circ \pi$ if and only if there exists $\ell\in\Z$ such that
\begin{equation}\label{eq2:iso_condition}
\ell \epsilon_j s_j\equiv s_{\sigma(j)}'\pmod q
\qquad\text{for all $j$}.
\end{equation}
In this case the isometry $\psi$ preserves or reverses the orientation according to $\epsilon=1$ or $\epsilon=-1$, where $\epsilon=\epsilon_1\dots\epsilon_m$.

Moreover, when this happens and $q$ is even, we have that $\psi:L\to L'$ takes $\tau_h$ to $\tau_{h'}'$, where $\rho_j:=({\ell\epsilon_{j}s_{j}-s_{\sigma(j)}'})/{q}$ and
$$
{h'}=h+h_{q;s}+h_{q;s'}+\textstyle\sum_j \rho_j.
$$
\end{proposition}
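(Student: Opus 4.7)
The existence part is direct. The map $\Psi$ descends to $\psi:L\to L'$ with $\pi'\circ\Psi = \psi\circ\pi$ precisely when $\Psi\Gamma\Psi^{-1} = \Gamma'$ in $\Ot(2m)$. Since $\Gamma$ and $\Gamma'$ are cyclic of the same order $q$, this is equivalent to $\Psi\gamma^\ell\Psi^{-1} = \gamma'$ for some $\ell$ coprime to $q$. A direct computation using the explicit form \eqref{eq2:Psi} shows that $\Psi\gamma^\ell\Psi^{-1}$ rotates the $\sigma(j)$-th coordinate plane by $2\pi\ell\epsilon_j s_j/q$, while $\gamma'$ rotates the same plane by $2\pi s'_{\sigma(j)}/q$, which yields the congruence \eqref{eq2:iso_condition}. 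The orientation of $\psi$ coincides with $\det\Psi = \prod_j \epsilon_j = \epsilon$, because permuting coordinate pairs in $\R^{2m}$ has determinant $+1$ and each complex conjugation contributes $-1$.

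For the spin-structure claim, assume $q$ and $m$ are even and fix a lift $\tilde\Psi \in \Spin(2m)$ of $\Psi$ (or in $\Pin(2m)$ if $\epsilon=-1$). Then $\psi$ takes $\tau_h$ to $\tau'_{h'}$ if and only if $\tilde\Psi\,\tau_h(\gamma^\ell)\,\tilde\Psi^{-1} = \tau'_{h'}(\gamma')$, since this is the exact condition for the induced map of spin bundles to descend consistently. Writing $R_k(\phi) := \cos\phi+\sin\phi\, e_{2k-1}e_{2k}$, a continuity argument at $\phi=0$ forces
\[
\tilde\Psi\, R_j(\phi)\,\tilde\Psi^{-1} = R_{\sigma(j)}(\epsilon_j\phi),
\]
since the left-hand side is a continuous lift of $\Theta(R_{\sigma(j)}(\epsilon_j\phi))$ that equals $1$ at $\phi=0$, pinning down the sign in $\{\pm R_{\sigma(j)}(\epsilon_j\phi)\}$. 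Combining this with \eqref{eq2:spin-str-even}, substituting $\ell\epsilon_j s_j = s'_{\sigma(j)} + q\rho_j$, and using $R_k(\phi+\pi) = -R_k(\phi)$, one computes
\[
\tilde\Psi\,\tau_h(\gamma^\ell)\,\tilde\Psi^{-1} = (-1)^{\ell(h+h_{q;s}) + \sum_j \rho_j}\,\prod_k R_k(s'_k\pi/q).
\]
Since $\tau'_{h'}(\gamma') = (-1)^{h'+h_{q;s'}}\prod_k R_k(s'_k\pi/q)$, and $\ell$ is odd (as $q$ is even and $\gcd(\ell,q)=1$), equality forces $h' \equiv h + h_{q;s} + h_{q;s'} + \sum_j \rho_j \pmod 2$, as claimed.

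The main technical point is the continuity argument pinning down $\tilde\Psi R_j(\phi)\tilde\Psi^{-1}$: without it one would know the identity only up to a $\pm$-sign from $\ker\Theta$, and it is exactly this argument (together with its analog in $\Pin(2m)$ when $\epsilon=-1$, where conjugation by odd-degree Clifford elements must be handled with care) that makes the final parity count unambiguous. The correction factor $(-1)^{h_{q;s}}$ built into \eqref{eq2:spin-str-even} is arranged precisely so that the resulting formula for $h'$ is symmetric in $L$ and $L'$, with the coordinate contributions packaged cleanly as $\sum_j\rho_j$.
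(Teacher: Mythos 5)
Your proposal is correct and follows essentially the same route as the paper: the condition $\pi'\circ\Psi=\psi\circ\pi$ is reduced to the conjugation relation $\Psi\gamma^\ell\Psi^{-1}=\gamma'$ (the paper's \eqref{eq2:generator-simil}), the orientation claim comes from counting conjugated coordinates, and the spin statement is obtained by lifting this relation to $\Spin(2m)$ (resp.\ $\Pin(2m)$) and comparing signs. In fact your explicit computation of $\tilde\Psi\,\tau_h(\gamma^\ell)\,\tilde\Psi^{-1}$ via $\tilde\Psi R_j(\phi)\tilde\Psi^{-1}=R_{\sigma(j)}(\epsilon_j\phi)$, the substitution $\ell\epsilon_js_j=s'_{\sigma(j)}+q\rho_j$, and the oddness of $\ell$ fills in the sign bookkeeping that the paper's proof only asserts in its final sentence, and it yields exactly the stated formula for $h'$.
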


\begin{proof}
Set $\xi=e^{2\pi i/q}$.
We first suppose that such $\psi$ exists, thus $\pi'\circ\Psi (\xi^{s_1}z_1,\dots,\xi^{s_m}z_m)= \psi \circ \pi (\xi^{s_1}z_1,\dots,\xi^{s_m}z_m) = \psi \circ \pi (z_1,\dots,z_m) = \pi'\circ\Psi (z_1,\dots,z_m)$ for every $(z_1,\dots,z_m)\in S^{2m-1}$.
We write $\Psi(z_1,\dots,z_m)= (w_1,\dots,w_m)$, thus one can check that $\Psi(\xi^{s_1}z_1,\dots,\xi^{s_m}z_m) = (\xi^{\epsilon_{\sigma^{-1}(1)} s_{\sigma^{-1}(1)}}w_1,\dots,\xi^{\epsilon_{\sigma^{-1}(m)} s_{\sigma^{-1}(m)}}w_m)$.
Hence, there exists $\ell\in\Z$ such that
$$
(\xi^{\ell s_1'}w_1,\dots,\xi^{\ell s_m'}w_m) = (\xi^{\epsilon_{\sigma^{-1}(1)} s_{\sigma^{-1}(1)}}w_1,\dots,\xi^{\epsilon_{\sigma^{-1}(m)} s_{\sigma^{-1}(m)}}w_m),
$$
then $\epsilon_j s_j\equiv \ell s_{\sigma(j)}'\pmod q$ for all $j$.

We next prove the converse.
Condition (\ref{eq2:iso_condition}) above is equivalent to
\begin{eqnarray}\label{eq2:generator-simil}
\gamma^\ell=\Psi^{-1}\circ \gamma'\circ \Psi .
\end{eqnarray}
This implies that the function $\psi:L\to L'$ given by $ \psi(\left[p\right]_L)=\left[\Psi(p)\right]_{L'} $ is a well defined isometric bijection.

Each conjugate coordinate in \eqref{eq2:Psi} changes the orientation, thus $\psi$ will preserve or reverse the orientation according to the parity of the number of conjugate coordinates.

Finally, relation (\ref{eq2:generator-simil}) lifts to the spin level as
$$
\tau'_{\overline{h}}(\gamma^\ell) = \widetilde{\Psi}^{-1}\cdot \tau_{h}(\gamma')\cdot \widetilde{\Psi}
$$
where $\widetilde{\psi}\in\operatorname{Pin}(2m)$ is one of the lifts of $\psi\in\operatorname{O}(2m)$ which proves the claim on the behavior of the spin structures under $\psi$.
\end{proof}

\section{Affine Congruence Lattices}\label{sec:congafflattices}
In this section we will associate to each lens space with a fixed spin structure an affine congruence lattice.
We will prove a close relation between isometries of lens spaces and linear bijections of affine congruence lattices that preserve the one-norm.

We first fix some notation.
Write
\begin{equation}\label{eq3:E^m}
  \ZZ^m=(\tfrac12+\Z)^m=\{(a_1,\dots,a_m)\in\Q^m: 2a_j\in2\Z+1 \quad\forall\,1\leq j\le m\},
\end{equation}
the translation of $\Z^m$ by $(\tfrac12,\dots,\tfrac12)$.
We will usually write the elements in $\ZZ^m$ as $\tfrac12(a_1,\dots,a_m)$ with $a_1,\dots,a_m$ odd integer numbers.
For $q\in\N$ and $s=(s_1,\dots,s_m)\in\Z^m$ such that $s_j$ is coprime to $q$ for every $j$, we set
\begin{equation}\label{eq3:L(q;s)}
  \mathcal L(q;s)=\{\tfrac12 (a_1,\dots,a_m)\in\ZZ^m: \textstyle\sum_j a_js_j\equiv0\pmod q\}.
\end{equation}
Furthermore, if $q$ is even, we set
\begin{equation}\label{eq3:L(q;s;h)}
  \mathcal L(q;s;h)=\{\tfrac12(a_1,\dots,a_m)\in\ZZ^m: \textstyle\sum_j a_js_j\equiv hq\pmod{2q}\}
\end{equation}
for $h\in\Z$.
Note that $\mathcal L(q;s;h)$ depends only on the parity of $h$ and $\mathcal L(q;s)=\mathcal L(q;s;0)\cup \mathcal L(q;s;1)$ as a disjoint union.

Let us denote by $\norma{\mu}$ the \emph{one-norm} of $\mu=\tfrac12(a_1,\dots,a_m)$, that is, $\norma{\mu}=\tfrac12\sum_{j=1}^m |a_j|$.
We recall that $\varphi:\ZZ^m\to\ZZ^m$ is an isometry that preserves the one-norm (i.e.\ a $\norma{\cdot}$-isometry) if and only if it is a composition of a permutation and multiplication by $\pm1$ in each coordinate (see for instance \cite[Prop.~3.3]{LMR}).
More precisely,
\begin{equation}\label{eq3:one-norm-isometry}
\varphi(\tfrac12(a_1,\dots,a_m)) = \tfrac12(\epsilon_{\sigma^{-1}(1)} a_{\sigma^{-1}(1)} ,\dots, \epsilon_{\sigma^{-1}(m)} a_{\sigma^{-1}(m)}),
\end{equation}
where $\sigma$ is a permutation of $\{1,\dots,m\}$ and $\epsilon_1,\dots,\epsilon_m\in\{\pm1\}$.
We will say that $\varphi$ preserves orientation if $\epsilon:=\prod_{j=1}^m\epsilon_j=1$, or reverses orientation if $\epsilon=-1$.

Define $\menos(\mu)=\#\{1\leq j\leq m: a_j<0\}$ for $\mu=\tfrac12(a_1,\dots,a_m)\in\ZZ^m$.
For any subset $\mathcal L$ of $\ZZ^m$ and $\epsilon\in\Z$, we denote $\mathcal L_{\epsilon} = \{\mu\in\mathcal L:\menos(\mu)\equiv \epsilon\pmod2\}$, thus $\mathcal L=\mathcal L_0\cup\mathcal L_1$ as a disjoint union.
Note that
$$
\menos(\varphi(\mu))-\menos(\mu)\equiv \tfrac{\epsilon-1}2\pmod2,
$$
that is, $R(\varphi(\mu))$ and $R(\mu)$ have the same parity if and only if $\epsilon=1$.
Hence, a $\norma{\cdot}$-isometry $\varphi$ preserves orientation if and only if $\varphi(\ZZ^m_0)=\ZZ^m_0$ and $\varphi(\ZZ^m_1)=\ZZ^m_1$ .

The next two propositions will illuminate the relation between isometries of lens spaces with given spin structures and $\norma{\cdot}$-isometries of affine congruence lattices.
This relation is explained in Corollary~\ref{cor3:relation-isometries}.

\begin{proposition}\label{prop3:cong-set-isom-odd}
Let $q$ be a positive odd integer and let $s$ and $s'$ be in $\Z^m$ with all coordinates coprime to $q$.
Let $\sigma$ be a permutation of $\{1,\dots,m\}$, $\epsilon_1,\dots,\epsilon_m\in\{\pm1\}$ and let $\varphi$ be as in \eqref{eq3:one-norm-isometry}.
Then, $\varphi(\mathcal L(q;s))=\mathcal L(q;s')$ if and only if there exists $\ell\in\Z$ coprime to $q$ such that
$$
\ell \epsilon_j s_j\equiv s_{\sigma(j)}'\pmod q
\qquad \text{for every $j$}.
$$
\end{proposition}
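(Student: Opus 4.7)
My plan is to translate the equality $\varphi(\mathcal L(q;s))=\mathcal L(q;s')$ into an equivalence of linear congruences modulo $q$, to use the substitution $a_j=2b_j+1$ (legitimate because the entries $a_j$ are odd and $q$ is odd, so $2$ is a unit modulo $q$) to convert the condition on $\ZZ^m$ into an affine condition on $\Z^m$, and finally to invoke the fact that two surjective homomorphisms $\Z^m\to\Z/q\Z$ with the same kernel differ by multiplication by a unit of $\Z/q\Z$.

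\textbf{Setup and easy direction.} Set $t_i:=\epsilon_is'_{\sigma(i)}$. Unwinding \eqref{eq3:one-norm-isometry}, I would first observe that if $\mu=\tfrac12(a_1,\dots,a_m)$ and $\varphi(\mu)=\tfrac12(a'_1,\dots,a'_m)$, then a simple reindexing of the permutation gives $\sum_j a'_js'_j=\sum_i a_it_i$. Hence $\varphi(\mathcal L(q;s))=\mathcal L(q;s')$ is equivalent to the statement that $\sum_ia_is_i\equiv0\pmod q\iff\sum_ia_it_i\equiv0\pmod q$ for every tuple $(a_1,\dots,a_m)$ of odd integers. For the backward direction, if $\ell\epsilon_js_j\equiv s'_{\sigma(j)}\pmod q$ with $\gcd(\ell,q)=1$, then $t_j\equiv\ell s_j$, so $\sum a_it_i\equiv\ell\sum a_is_i\pmod q$, and invertibility of $\ell$ finishes this side immediately.

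\textbf{Forward direction: reducing to kernel equality.} For the converse, I would write $a_i=2b_i+1$ with arbitrary $b_i\in\Z$. Since $q$ is odd, the equivalence above rewrites as: for every $b\in\Z^m$,
\[
f_s(b)\equiv c\pmod q\iff f_t(b)\equiv c'\pmod q,
\]
where $f_s(b):=\sum b_is_i$, $f_t(b):=\sum b_it_i$, $c:=-2^{-1}\sum_is_i$ and $c':=-2^{-1}\sum_it_i$. Both $f_s,f_t\colon\Z^m\to\Z/q\Z$ are surjective since $s_1$ and $t_1$ are coprime to $q$; in particular the affine sets $f_s^{-1}(c)$ and $f_t^{-1}(c')$ are non-empty. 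They coincide by hypothesis, and subtracting a common base point yields $\ker f_s=\ker f_t$.

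\textbf{Conclusion and main obstacle.} To finish, I would note that $f_s$ and $f_t$ are surjections $\Z^m\to\Z/q\Z$ with the same kernel, hence factor through isomorphisms $\Z^m/\ker f_s\cong\Z/q\Z$; comparing the two produces an automorphism of $\Z/q\Z$, necessarily multiplication by a unit $\ell\in(\Z/q\Z)^\times$, so $f_t\equiv\ell f_s\pmod q$. Evaluating on the standard basis of $\Z^m$ gives $t_i\equiv\ell s_i\pmod q$, which is exactly $\ell\epsilon_is_i\equiv s'_{\sigma(i)}\pmod q$ for all $i$. The main subtlety will be this final passage from affine to linear: one must verify that the affine sets are non-empty (guaranteed by coprimality of $s_1$ and $t_1$ with $q$) before subtracting a base point to extract the kernel equality, and then use the classical fact that a surjection onto a cyclic group is determined by its kernel up to a unit.
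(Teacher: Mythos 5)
Your proof is correct, and while the backward direction is the same one-line computation as in the paper, your forward direction takes a genuinely different route. The paper first makes the parameters odd (replacing $s_j$ by $s_j+q$ where necessary), then plugs the explicit test elements $\mu_i=\tfrac12(s_i,q,\dots,q,-s_1,q,\dots,q)\in\mathcal L(q;s)$ into the hypothesis $\varphi(\mathcal L(q;s))\subset\mathcal L(q;s')$ to read off the congruences $s_i\epsilon_1 s'_{\sigma(1)}\equiv s_1\epsilon_i s'_{\sigma(i)}\pmod q$, and then defines $\ell$ concretely by $s_1\ell\equiv\epsilon_1 s'_{\sigma(1)}\pmod q$. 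You instead linearize globally via $a_i=2b_i+1$ (legitimate because $2$ is a unit modulo the odd $q$), recast the hypothesis as the equality of two non-empty affine level sets $f_s^{-1}(c)=f_t^{-1}(c')$ of the surjective homomorphisms $f_s,f_t\colon\Z^m\to\Z/q\Z$, subtract a base point to obtain $\ker f_s=\ker f_t$, and conclude by the classical fact that two surjections onto a cyclic group with the same kernel differ by multiplication by a unit, which upon evaluation at the standard basis gives $t_i\equiv\ell s_i\pmod q$ exactly as required. Your argument is more structural: it needs no explicit lattice elements and no parity adjustment of the parameters, and it makes transparent why a single unit $\ell$ governs all coordinates; the paper's argument is more elementary and self-contained and produces $\ell$ explicitly from the first coordinate. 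Both correctly yield $\ell$ coprime to $q$ (yours because $\ell$ is by construction a unit of $\Z/q\Z$, the paper's because $\epsilon_1 s'_{\sigma(1)}$ is coprime to $q$), so the two proofs are interchangeable.
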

\begin{proof}
Let us first prove the converse.
We assume that there exists such $\ell\in\Z$.
Let $\mu=\tfrac12(a_1,\dots,a_m)\in\mathcal L(q;s)$, thus $\sum_j a_js_j\equiv0\pmod q$.
We have that $\varphi(\mu)\in\mathcal L(q;s')$ since
\begin{align*}
\sum_j a_{\sigma^{-1}(j)} \epsilon_{\sigma^{-1}(j)}  s_j'
    &=\sum_j  a_{j} \epsilon_{j} s_{\sigma(j)}'
    \equiv \sum_j a_{j} \ell \epsilon_{j}^2  s_{j} \pmod q \\
    &\equiv \ell\sum_j  a_{j} s_{j}\equiv0\pmod q.
\end{align*}
This proves that $\varphi(\mathcal L(q;s))\subset \mathcal L(q;s')$.
The other inclusion is very similar.

We now suppose that $\varphi(\mathcal L(q;s))=\mathcal L(q;s')$.
We can assume that $s_j$ is odd for every $j$, since if not, we replace it by $s_j+q$.
Let $\mu_i=\tfrac12(s_i,q,\dots,q, -s_1, q, \dots,q)$ for $2\leq i\leq m$, where the coordinate $-s_1$ is in the $i$th entry.
One can easily check that $\mu_i\in\mathcal L(q;s)$.
Hence $\varphi(\mu_i)\in\mathcal L(q;s')$ and
\begin{align*}
 0\equiv s_i\epsilon_1 s_{\sigma(1)}'-s_1\epsilon_i s_{\sigma(i)}' + q\sum_{j\neq 1,i} \epsilon_js_{\sigma(j)}'\equiv s_i\epsilon_1 s_{\sigma(1)}'-s_1\epsilon_i s_{\sigma(i)}'\pmod q.
\end{align*}
We let $\ell\in\Z$ such that $s_1\ell\equiv \epsilon_1 s_{\sigma(1)}'\pmod q$, thus $\ell\epsilon_i s_i\equiv  s_{\sigma(i)}'\pmod q$ for all $1\le i \le m$.
\end{proof}

\begin{proposition}\label{prop3:cong-set-isom-even}
Let $q$ be a positive even integer and let $s$ and $s'$ be in $\Z^m$ with all coordinates coprime to $q$.
Let $\sigma$ be a permutation of $\{ 1,\ldots,m \}$, $\epsilon_1,\ldots,\epsilon_m \in \{ \pm 1 \} $, $\rho \in \{ 0, 1 \}$ and let $\varphi$ be as in \eqref{eq3:one-norm-isometry}.
Then $\varphi(\mathcal L(q;s;h))=\mathcal L(q;s';h+\rho)$ for all $h \in \{ 0 , 1 \}$ if and only if there exists $\ell \in \Z$ coprime to $q$ such that
$$
\ell \epsilon_j s_j \equiv s'_{\sigma(j)} \pmod q
\quad\text{for all $j$ and}\quad
\rho \equiv \sum_{j=1}^m\frac{\ell \epsilon_j s_j - s'_{\sigma(j)}}{q}\pmod 2.
$$
\end{proposition}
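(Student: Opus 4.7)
The plan is to mirror the structure of Proposition~\ref{prop3:cong-set-isom-odd}, refining every congruence from mod~$q$ to mod~$2q$. The whole computation rests on two parity observations: since $q$ is even and $\ell$ is required to be coprime to $q$, the integer $\ell$ must be odd; and since any element of $\ZZ^m$ has the form $\tfrac12(a_1,\dots,a_m)$ with every $a_j$ odd, each product $a_j\epsilon_j$ is odd as well. These two facts are precisely what upgrade a mod-$q$ identity into the desired mod-$2q$ identity.

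For the forward direction, assume $\ell\epsilon_j s_j\equiv s'_{\sigma(j)}\pmod q$ and put $\rho_j:=(\ell\epsilon_j s_j-s'_{\sigma(j)})/q\in\Z$, so the hypothesis reads $\rho\equiv\sum_j\rho_j\pmod 2$. Take $\mu=\tfrac12(a_1,\dots,a_m)\in\mathcal L(q;s;h)$, hence $\sum_j a_j s_j\equiv hq\pmod{2q}$. Writing $s'_{\sigma(j)}=\ell\epsilon_j s_j-\rho_j q$ and relabeling, the quantity testing membership of $\varphi(\mu)$ in $\mathcal L(q;s';h+\rho)$ is
\begin{equation*}
\sum_{j=1}^m a_j\epsilon_j\, s'_{\sigma(j)}
= \ell\sum_{j=1}^m a_j s_j - q\sum_{j=1}^m a_j\epsilon_j\rho_j.
\end{equation*}
Since $\ell$ is odd, $\ell\sum_j a_js_j\equiv\sum_j a_js_j\equiv hq\pmod{2q}$; and since each $a_j\epsilon_j$ is odd, $q\sum_j a_j\epsilon_j\rho_j\equiv q\sum_j\rho_j\equiv q\rho\pmod{2q}$. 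Using $-q\rho\equiv q\rho\pmod{2q}$, the whole sum is $\equiv (h+\rho)q\pmod{2q}$, so $\varphi(\mu)\in\mathcal L(q;s';h+\rho)$. The opposite inclusion follows by running the same argument with $\varphi^{-1}$.

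For the converse, unioning the hypothesis over $h\in\{0,1\}$ yields $\varphi(\mathcal L(q;s))=\mathcal L(q;s')$. I would then produce $\ell\in\Z$ coprime to $q$ satisfying $\ell\epsilon_j s_j\equiv s'_{\sigma(j)}\pmod q$ by an even-$q$ adaptation of the test-element argument in Proposition~\ref{prop3:cong-set-isom-odd}. With $\ell$ in hand, set $\rho_j:=(\ell\epsilon_j s_j-s'_{\sigma(j)})/q$; choosing any $\mu\in\mathcal L(q;s;0)$ and applying the forward computation forces $-q\sum_j\rho_j\equiv\rho q\pmod{2q}$, i.e.\ $\rho\equiv\sum_j\rho_j\pmod 2$. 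Nonemptiness of $\mathcal L(q;s;0)$ in the relevant case (namely $m$ even, which is required for a spin structure to exist when $q$ is even) follows from the observation that translating by $\tfrac12(q,0,\dots,0)$ preserves $\ZZ^m$ (since $q$ is even) and sends $\mathcal L(q;s;h)$ to $\mathcal L(q;s;h+1)$ (since $qs_1\equiv q\pmod{2q}$ by oddness of $s_1$).

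The main obstacle is the even-$q$ adaptation of the test-element construction: the tuples $\mu_i=\tfrac12(s_i,q,\dots,q,-s_1,q,\dots,q)$ used in Proposition~\ref{prop3:cong-set-isom-odd} do not lie in $\ZZ^m$ when $q$ is even, since their non-distinguished entries are even. The remedy is to replace those $q$ entries by odd representatives of $0\pmod q$ (e.g.\ $q+1$ in suitably paired positions so that $\sum_j a_js_j\equiv 0\pmod q$ still holds). Once the modified tuples are verified to detect the relation $s_i\epsilon_1 s'_{\sigma(1)}\equiv s_1\epsilon_i s'_{\sigma(i)}\pmod q$, the existence of $\ell$ follows exactly as in the odd case, and the parity bookkeeping for $\rho$ is subsumed by the forward-direction computation above.
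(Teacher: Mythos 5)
Your forward direction is exactly the paper's computation, and your way of extracting the parity condition on $\rho$ (apply the already-proved forward computation to one element of $\mathcal L(q;s;0)$ and compare with the hypothesis) is a legitimate substitute for the paper's explicit substitution $\ell s_i=q\rho_i+\epsilon_i s'_{\sigma(i)}$. The genuine gap is in the converse, precisely at the step you flag as "the main obstacle": producing $\ell$. When $q$ is even there is no odd representative of $0\pmod q$ at all, since every multiple of $q$ is even; so the filler coordinates of a test element of $\ZZ^m$ can never be made $\equiv 0\pmod q$. Your fallback of placing $q+1$ in paired positions does not repair this: a pair contributes $(q+1)(s_j+s_{j'})\equiv s_j+s_{j'}\pmod q$ to $\sum_j a_js_j$, which need not vanish, and — more fundamentally — even if membership in $\mathcal L(q;s)$ were arranged, after applying $\varphi$ those filler coordinates contribute unknown amounts $\equiv\epsilon_j s'_{\sigma(j)}\pmod q$ to the test sum for $\mathcal L(q;s')$, so the resulting congruence no longer isolates $s_i\epsilon_1 s'_{\sigma(1)}-s_1\epsilon_i s'_{\sigma(i)}$. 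In the odd case the fillers were honest multiples of $q$ and hence invisible on both the $s$-side and the $s'$-side; that feature is irrecoverable for even $q$, and trying to choose fillers that kill both $\sum_j a_js_j$ and $\sum_j\epsilon_j a_j s'_{\sigma(j)}$ modulo $q$ amounts to already knowing the congruences you are trying to prove. So "the existence of $\ell$ follows exactly as in the odd case" does not go through as written.

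The missing idea, which is how the paper proceeds, is to use \emph{two} elements and cancel the fillers by subtraction: take $\eta=\tfrac12(-s_2,s_1,-s_4,s_3,\dots,-s_m,s_{m-1})$ and $\mu_i=\eta+(-s_i,0,\dots,0,s_1,0,\dots,0)$ (the integer vector having $-s_i$ in coordinate $1$ and $s_1$ in coordinate $i$); both lie in $\mathcal L(q;s;0)$ because their defining sums telescope to $0$ (this uses $m$ even, which, as you note, is the only relevant case). Since $\varphi(\eta)$ and $\varphi(\mu_i)$ lie in the same set $\mathcal L(q;s';\rho)$ and the two elements differ only in coordinates $1$ and $i$, subtracting the two membership congruences cancels every unknown filler contribution and yields $s_1\epsilon_i s'_{\sigma(i)}-s_i\epsilon_1 s'_{\sigma(1)}\equiv 0\pmod q$; choosing $\ell$ with $\ell s_1\equiv\epsilon_1 s'_{\sigma(1)}\pmod q$ then gives $\ell\epsilon_j s_j\equiv s'_{\sigma(j)}\pmod q$ for all $j$, after which your parity argument closes the proof. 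A small additional point: your nonemptiness remark only exhibits a bijection between $\mathcal L(q;s;0)$ and $\mathcal L(q;s;1)$; you still need one explicit element of $\mathcal L(q;s)$, and $\eta$ itself serves that purpose.
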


\begin{proof}
Suppose that such an $\ell$ exists.
Note that $\ell$ is odd since it is coprime to $q$.
Let $\rho_j=({\ell \epsilon_j s_j - s'_{\sigma(j)}})/{q}$, thus $ s_{\sigma(j)}'=\ell \epsilon_js_j -\rho_jq$.
Let $\mu = \tfrac12(a_1,\ldots,a_m)\in \mathcal L(q;s;h)$, thus $\sum_j a_js_j\equiv hq\pmod{2q}$.
Then
\begin{align*}
\sum_j a_{j} \epsilon_{j} s_{\sigma(j)}'
    &\equiv \sum_j a_{j} \epsilon_{j} (\ell \epsilon_js_j-\rho_jq) \pmod {2q}\\
    &\equiv \ell\sum_j a_{j} s_{j}-q\sum_j a_{j} \epsilon_{j} \rho_j \pmod {2q} \\
    &\equiv (h+\rho)q \pmod {2q}.
\end{align*}
In the last step we used that $\ell$, $a_j$ and $\epsilon_j$ are odd integers.
We have proved that $\varphi(\mu) \in \mathcal L(q;s';h+\rho)$, while the reversed inclusion is very similar.

We now assume that $\varphi(\mathcal L(q;s;h))=\mathcal L(q;s';h+\rho)$ for $h=0,1$.
Define
\begin{eqnarray*}
\eta &=& \tfrac 12(-s_2,s_1,-s_4,s_3,\ldots,-s_{m},s_{m-1}), \\
\mu_i &=& \eta+(-s_i,0,\ldots,0,s_1,0,\ldots,0) \quad\text{for all } 1<i\le m.
\end{eqnarray*}
Clearly, $\eta \in\mathcal L(q;s;0)$ and $\mu_i \in\mathcal L(q;s;0)$, then
$$
s_1 \epsilon_i s'_{\sigma(i)} - s_i \epsilon_1 s'_{\sigma(1)}\equiv 0 \pmod  q.
$$
Let $\ell\in\Z$ be such that $\ell s_1\equiv \epsilon_1 s'_{\sigma(1)}\pmod q$.
By the above, $\ell s_i\equiv \epsilon_i s'_{\sigma(i)}\pmod q$ for all $1\le i\le m$.
It remains to show that this $\ell$ fulfills the condition on $\rho$.
For this, let $\rho_i=\frac{\ell s_i-\epsilon_i s'_{\sigma(i)}}{q}$.
Since $\varphi(\eta)\in\mathcal L(q;s';\rho)$, we have
$$
-s_2\epsilon_1 s'_{\sigma(1)} + s_1 \epsilon_2 s'_{\sigma(2)} + \ldots - s_m \epsilon_{m-1} s'_{\sigma(m-1)}
 + s_{m-1} \epsilon_{m} s'_{\sigma(m)} \equiv \rho q \pmod q.
$$
Multiplying both sides by $\ell$ and substituting $\ell s_i$ by $q \rho_i + \epsilon_i s'_{\sigma(i)}$ gives $\rho \equiv \sum_i \rho_i \pmod {2q}$.
\end{proof}

In the next result, $\epsilon$-oriented means that it preserves orientation if $\epsilon=1$ and it reverses orientation if $\epsilon=-1$.

\begin{corollary}\label{cor3:relation-isometries}
Let $q$ be a positive integer and let $s$ and $s'$ be in $\Z^m$ with all coordinates coprime to $q$.
Then, if $q$ is odd, there is an $\epsilon$-oriented isometry $\psi$ between  $L(q;s)$ and $L(q;s')$  if and only if there is an $\epsilon$-oriented $\norma{\cdot}$-isometry $\varphi$ between $\mathcal L(q;s)$ and $\mathcal L(q;s')$.
If $q$ is even and $\rho\in\{0,1\}$, there is an $\epsilon$-oriented isometry $\psi$ between  $L(q;s)$ and $L(q;s')$ taking the spin structures $\tau_h$ to $\tau'_{h+\rho}$ for $h=0,1$ if and only if there is an $\epsilon$-oriented $\norma{\cdot}$-isometry $\varphi$ with $\varphi (\mathcal L(q;s;h))=\mathcal L(q;s;h+\rho)$ for $h=0,1$.
\end{corollary}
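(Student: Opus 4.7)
The plan is to obtain this corollary as a formal consequence of chaining together Proposition~\ref{prop2:lens-isom-explicit} with Proposition~\ref{prop3:cong-set-isom-odd} (for $q$ odd) or Proposition~\ref{prop3:cong-set-isom-even} (for $q$ even). Both propositions translate geometric/lattice conditions into exactly the same arithmetic data, namely a permutation $\sigma$ of $\{1,\dots,m\}$, signs $\epsilon_1,\dots,\epsilon_m\in\{\pm1\}$, and an integer $\ell$ coprime to $q$ satisfying
$$
\ell\epsilon_j s_j\equiv s'_{\sigma(j)}\pmod q\qquad\text{for all }j.
$$
Once the arithmetic data is identified, the map $\Psi$ of \eqref{eq2:Psi} used in Proposition~\ref{prop2:lens-isom-explicit} and the $\norma{\cdot}$-isometry $\varphi$ of \eqref{eq3:one-norm-isometry} are built from the very same $\sigma$ and $\epsilon_j$, so their orientations coincide (both are $\epsilon=\prod_j\epsilon_j$).

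For the odd case, I would start by assuming an $\epsilon$-oriented isometry $\psi:L(q;s)\to L(q;s')$ exists. Since every isometry between lens spaces lifts to an isometry of $S^{2m-1}$ coming from the normalizer of the cyclic group in $\Ot(2m)$, Proposition~\ref{prop2:lens-isom-explicit} produces $\sigma,\epsilon_j,\ell$ with the required congruence and with $\prod_j\epsilon_j=\epsilon$. Feeding this data into Proposition~\ref{prop3:cong-set-isom-odd} yields $\varphi(\mathcal L(q;s))=\mathcal L(q;s')$, and the formula \eqref{eq3:one-norm-isometry} with the same $\sigma,\epsilon_j$ shows $\varphi$ is $\epsilon$-oriented. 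The reverse direction runs through the same chain backwards.

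For the even case, I proceed identically for the congruence and orientation parts, but must additionally match the spin-structure parameter~$\rho$. Given the same data $(\sigma,\epsilon_j,\ell)$, set $\rho_j=(\ell\epsilon_j s_j-s'_{\sigma(j)})/q$. Proposition~\ref{prop2:lens-isom-explicit} tells us $\psi$ carries $\tau_h$ to $\tau'_{h'}$ with $h'\equiv h+h_{q;s}+h_{q;s'}+\sum_j\rho_j\pmod 2$, while Proposition~\ref{prop3:cong-set-isom-even} tells us $\varphi(\mathcal L(q;s;h))=\mathcal L(q;s';h+\rho)$ holds exactly when $\rho\equiv\sum_j\rho_j\pmod 2$. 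The reason these produce the \emph{same} $\rho$ is by design: the correction $(-1)^{k h_{q;s}}$ appearing in the definition \eqref{eq2:spin-str-even} of $\tau_h$ is precisely what makes the labelling $h\mapsto\tau_h$ intrinsic to $L$ (independent of the chosen representatives of $s_j\bmod q$), and a direct verification shows that the labelling $h\mapsto\mathcal L(q;s;h)$ transforms the same way under $s_j\mapsto s_j+q$ (an odd coordinate of any $\tfrac12(a_1,\dots,a_m)\in\ZZ^m$ flips the residue $\sum_j a_js_j\bmod 2q$ by $q$). Thus both labellings are tied to the same intrinsic $\Z/2$-torsor, and the two formulas for $\rho$ agree.

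The main obstacle is the bookkeeping in this last paragraph: one has to carefully check that the parity corrections $h_{q;s}+h_{q;s'}$ in the spin-structure formula cancel exactly against the parity-shifts inherent to the definition of $\mathcal L(q;s;h)$, so that the single symbol $\rho$ appearing on both sides of the corollary has an unambiguous meaning. This is a modulo-$2$ computation with no new ingredients beyond the two propositions already proved, but it is the only point where the proof goes beyond a formal concatenation of statements.
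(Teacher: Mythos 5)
Your skeleton---concatenating Proposition~\ref{prop2:lens-isom-explicit} with Proposition~\ref{prop3:cong-set-isom-odd}, resp.\ Proposition~\ref{prop3:cong-set-isom-even}, through the common data $(\sigma,\epsilon_1,\dots,\epsilon_m,\ell)$, and observing that both the map $\Psi$ of \eqref{eq2:Psi} and the map $\varphi$ of \eqref{eq3:one-norm-isometry} have orientation $\epsilon=\prod_j\epsilon_j$---is exactly the paper's proof, and for $q$ odd your argument is complete.

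For $q$ even, however, the reconciliation of the two shifts in your last paragraph does not work as written. The factor $(-1)^{kh_{q;s}}$ in \eqref{eq2:spin-str-even} makes the labelling $h\mapsto\tau_h$ \emph{invariant} under $s_j\mapsto s_j+q$ (as you say), while your own parenthetical computation shows that the lattice labelling \emph{flips}: $\mathcal L(q;s_1+q,s_2,\dots,s_m;h)=\mathcal L(q;s;h+1)$. So the two labellings do not ``transform the same way'' and are not tied to the same $\Z/2$-torsor. Consequently the spin-side shift $h'-h\equiv h_{q;s}+h_{q;s'}+\sum_j\rho_j\pmod 2$ from Proposition~\ref{prop2:lens-isom-explicit} and the lattice-side shift $\rho\equiv\sum_j\rho_j\pmod 2$ from Proposition~\ref{prop3:cong-set-isom-even} differ by $h_{q;s}+h_{q;s'}\pmod 2$, and your claimed agreement fails whenever this is odd. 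Concretely, take $q=4$, $m=2$, $s=(1,1)$, $s'=(1,5)$: then $\gamma=\gamma'$ and $\tau_h=\tau'_h$ as homomorphisms, so the identity is an orientation-preserving isometry taking $\tau_h$ to $\tau'_h$ (spin-side shift $0$), whereas $\mathcal L(4;s';h)=\mathcal L(4;s;h+1)$ and every orientation-preserving $\norma{\cdot}$-isometry of $\ZZ^2$ preserves $a_1+a_2$ up to sign, hence maps each class $\{a_1+a_2\equiv 4h\pmod 8\}$ to itself, so on the lattice side only the shift $1$ occurs. The corollary is a purely formal concatenation of the two propositions only under the normalization $0<s_j,s'_j<q$, where $h_{q;s}=h_{q;s'}=0$ and the two shifts coincide; this is how it is meant and used in the paper. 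So either impose that normalization at the outset, or phrase the lattice side in terms of the associated lattices $\mathcal L(q;s;h+h_{q;s})$ of Lemma~\ref{lem4:dimV-lens}, for which the two shifts match identically; with that repair (and the torsor argument removed) your proof coincides with the paper's.
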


\begin{proof}
Let $q$ be an odd integer. The claim follows from Proposition~\ref{prop2:lens-isom-explicit} and Proposition~\ref{prop3:cong-set-isom-odd} since both conditions are equivalent to the existence of a permutation $\sigma$ of $\{1,\dots,m\}$, $\epsilon_1,\dots,\epsilon_m\in\{\pm1\}$ and $\ell\in\Z$ such that $\ell\epsilon_j s_j\equiv s_{\sigma(j)}'\pmod q$ for all $j$.
If $q$ is even, the claim follows similarly from Proposition~\ref{prop2:lens-isom-explicit} and Proposition~\ref{prop3:cong-set-isom-even}.
\end{proof}

\section{Dirac spectra of lens spaces}\label{sec:spectra}

In this section we will give a geometric characterization of lens spaces that are Dirac isospectral in terms of the $\norma{\cdot}$-lengths of their associated affine congruence lattices.

We will identify an element $\sum_{j=1}^m a_j\varepsilon_j\in\mathfrak h^*$ (see \eqref{eq2:cartan}) with $(a_1,\dots,a_m)\in\C^m$.
Hence, the weight lattice $P(G)$ for $G=\Spin(2m)$ is the disjoint union of $\Z^m$ and $\ZZ^m=(\tfrac12,\dots,\tfrac12)+\Z^m$ (see \eqref{eq3:E^m}).
We recall that $\pi_k^\pm$ denotes the irreducible representation of $G$ with highest weight $\Lambda_k^\pm=\frac12((2k+1)\varepsilon_1+\varepsilon_2+\dots+\varepsilon_{m-1}\pm\varepsilon_m)$.
As usual, $m_\pi(\mu):=\dim V_\pi(\mu)$, the multiplicity of $\mu$ in $\pi$.

\begin{lemma}\label{lem4:dimV-lens}
Let $L(q;s_1,\dots,s_m)=\Gamma\ba S^{2m-1}$ be lens space with spin structure $\tau$.
Then
\begin{equation*}
\dim V_{\pi_k^\pm}^{\tau(\Gamma)}=
\sum_{\mu\in \mathcal L_{\Gamma;\tau}}\, m_{\pi_k^\pm}(\mu)
\end{equation*}
where
$$
\mathcal L_{\Gamma;\tau} =
\begin{cases}
  \mathcal L(q;s) &\text{if $q$ is odd,}\\
  \mathcal L(q;s;h+h_{q;s}) &\text{if $q$ is even and $\tau=\tau_h$.}
\end{cases}
$$
\end{lemma}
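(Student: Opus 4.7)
The plan is to compute the dimension of the invariants by the usual character-projection formula
\[
\dim V_{\pi_k^\pm}^{\tau(\Gamma)}=\frac{1}{|\Gamma|}\sum_{\gamma'\in\Gamma}\operatorname{tr}\pi_k^\pm(\tau(\gamma'))=\frac{1}{q}\sum_{j=0}^{q-1}\operatorname{tr}\pi_k^\pm(\tau(\gamma)^j),
\]
expand the trace as a sum over the weights of $\pi_k^\pm$ (all of which lie in $\ZZ^m$, since the highest weights $\Lambda_k^\pm$ are half-integral), and then carry out the inner sum over $j$ as a geometric series whose value is $q$ precisely when the relevant congruence defining $\mathcal{L}_{\Gamma;\tau}$ is satisfied, and $0$ otherwise.

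First I would make the torus calculation concrete. Write $\tau(\gamma^j)=z^j\cdot\exp(X_j)$ with $X_j=\sum_{\ell=1}^m \theta_\ell^{(j)}\, e_{2\ell-1}e_{2\ell}\in\mathfrak t$, where $z^j\in\{\pm1\}$ is a central factor; explicitly $z^j=1$ and $\theta_\ell^{(j)}=(q+1)js_\ell\pi/q$ in the odd case \eqref{eq2:spin-str-odd}, and $z^j=(-1)^{j(h+h_{q;s})}$, $\theta_\ell^{(j)}=js_\ell\pi/q$ in the even case \eqref{eq2:spin-str-even}. Using $\varepsilon_\ell(X)=2i\theta_\ell$, for any weight $\mu=\tfrac12(a_1,\dots,a_m)\in\ZZ^m$ one gets $\mu(X_j)=\frac{i\pi j}{q}\sum_\ell a_\ell s_\ell$ in the even case and $\mu(X_j)=\frac{i\pi (q+1) j}{q}\sum_\ell a_\ell s_\ell$ in the odd case, while $z^j$ contributes $(-1)^{j(h+h_{q;s})}$ on every weight vector of $\pi_k^\pm$ because all $a_\ell$ are odd integers so the center of $\Spin(2m)$ acts trivially or by $-1$ uniformly on $V_{\pi_k^\pm}$. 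Hence
\[
\operatorname{tr}\pi_k^\pm(\tau(\gamma^j))
=\sum_{\mu\in\ZZ^m} m_{\pi_k^\pm}(\mu)\; z^j\, e^{\mu(X_j)}.
\]

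Swapping the order of summation and computing the geometric sum in $j$ is the main step. Setting $N(\mu):=\sum_\ell a_\ell s_\ell\in\Z$, the inner sum becomes
\[
\frac{1}{q}\sum_{j=0}^{q-1} \bigl(\zeta(\mu)\bigr)^j,\qquad
\zeta(\mu)=
\begin{cases}
e^{i\pi (q+1)N(\mu)/q} & q\text{ odd},\\[2pt]
(-1)^{h+h_{q;s}}\,e^{i\pi N(\mu)/q} & q\text{ even},
\end{cases}
\]
which equals $1$ if $\zeta(\mu)=1$ and $0$ otherwise. For $q$ odd, $\zeta(\mu)=1$ amounts to $(q+1)N(\mu)\in 2q\Z$, and since $\gcd((q+1)/2,q)=1$ this is equivalent to $N(\mu)\equiv 0\pmod q$, i.e.\ $\mu\in\mathcal L(q;s)$. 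For $q$ even, $\zeta(\mu)=1$ says $N(\mu)+(h+h_{q;s})q\in 2q\Z$, i.e.\ $N(\mu)\equiv (h+h_{q;s})q\pmod{2q}$, which is exactly the defining condition of $\mathcal L(q;s;h+h_{q;s})$. Combining these with the expansion above yields the claimed equality.

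The bookkeeping of parities is the only point where something could go wrong: one must verify that the center of $\Spin(2m)$ acts on every weight space of $\pi_k^\pm$ by the same sign (so that the factor $z^j$ can be pulled out), and that the arithmetic with $(q+1)/q$ in the odd case really collapses to a congruence modulo $q$ rather than modulo $2q$. Both are short checks: centrality of $z^j$ together with the fact that all coordinates of any weight of $\pi_k^\pm$ are odd half-integers handles the first, and coprimality of $q$ with $q+1$ (hence with $(q+1)/2$ when $q$ is odd) handles the second. Everything else is a routine manipulation of Frobenius reciprocity and finite Fourier sums on the cyclic group $\Gamma$.
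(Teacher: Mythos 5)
Your proposal is correct and rests on the same ingredients as the paper's proof: the reduction to the maximal torus, the fact that all weights of $\pi_k^\pm$ lie in $\ZZ^m$ (so have odd numerators), the explicit computation of $\mu(X)$ for the torus element $\tau(\gamma)$, and the same arithmetic (coprimality of $\tfrac{q+1}{2}$ with $q$ in the odd case, the uniform sign of the central factor in the even case). The only difference is packaging: you obtain the count via the character-projection average over $\Gamma$ and a geometric sum in $j$, whereas the paper notes directly that a weight vector is $\tau(\Gamma)$-invariant if and only if $\tau(\gamma)^\mu=1$ — these are equivalent, since the eigenvalues $\zeta(\mu)$ are automatically $q$-th roots of unity because $\tau(\gamma)$ has order $q$.
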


We call $\mathcal L_{\Gamma,\tau}$ the \emph{affine congruence lattice associated to $L$ and $\tau$}.

\begin{proof}
For any representation $\pi$ of $G$ one has the decomposition $V_\pi=\oplus_{\mu\in P(G)} V_\pi(\mu)$ in weight spaces, that is, an element $g\in T$ acts on each $V_\pi(\mu)$ by $g^\mu :=e^{\mu(X)}$, where $X\in\mathfrak h$ satisfies $\exp(X)=g$.
Since $\tau(\Gamma)\subset T$, we have that
$$
V_\pi^{\tau(\Gamma)}= \bigoplus_{\mu\in P(G)} V_\pi(\mu)^{\tau(\Gamma)}.
$$
Moreover, $\Gamma$ is cyclically generated by $\gamma$ as in \eqref{eq2:gamma}, thus a nonzero element $v\in V_\pi(\mu)$ is $\Gamma$-invariant if and only if $\tau(\gamma)^\mu=1$.
Hence
$$
\dim V_\pi^{\tau(\Gamma)} =\sum_{\mu} \dim V_\pi(\mu)^{\tau(\Gamma)} =\sum_{\mu} m_\pi(\mu)
$$
where $\mu$ runs through $P(G)$ satisfying $\tau(\gamma)^\mu=1$.

In the case at hand, $\pi=\pi_k^\pm$ for some integer $k\geq0$, we have that $m_{\pi_k^\pm}(\mu)=0$ for every $\mu\in\Z^m$ (see Lemma~\ref{lem4:multiplicity}).
It remains to show that $\{\mu\in \ZZ^m: \tau(\gamma)^\mu=1\}$ is equal to $\mathcal L(q;s)$ if $q$ is odd or equal to $\mathcal L(q;s;h)$ if $q$ is even and $\tau=\tau_h$.

We first assume $q$ odd.
Recall that $\tau$ is as in \eqref{eq2:spin-str-odd}
We let
$
X_{\tau(\gamma)}=
\sum_{j=1}^m \tfrac{(q+1)s_j\pi}{q}e_{2j-1}e_{2j},
$
thus $\exp (X_{\tau(\gamma)})=\tau(\gamma)$.
Then $\tau(\gamma)^\mu=e^{\mu(X_{\tau(\gamma)})}$ with
$$
\mu(X_{\tau(\gamma)})
= 2\pi i\frac{(q+1)}{2q}\sum_{j=1}^m a_js_j
$$
for $\mu=\tfrac12\sum_{j=1}^{m} a_j\varepsilon_j\in \ZZ^m$.
Hence, $\tau(\gamma)^\mu=1$ if and only if $\frac{q+1}2\sum_{j=1}^m a_js_j\in q\Z$.
Since $\frac{q+1}2$ is an integer coprime to $q$ and $\sum_{j=1}^m a_js_j\in\Z$, the assertion in this case follows.

We next assume $q$ even, thus $m$ is even and $\tau=\tau_h$ for some $h\in\{0,1\}$ as in \eqref{eq2:spin-str-even}.
Then
$$
\tau_h(\gamma)
= \prod_{j=1}^{m-1} \left(\cos(\tfrac{s_j\pi}{q}) + \sin(\tfrac{s_j\pi}{q})\, e_{2j-1}e_{2j}\right) \cdot \left(\cos(\tfrac{s_m+(h+h_{q;s})q}{q}\,\pi) + \sin(\tfrac{s_m+(h+h_{q;s})q}{q}\,\pi)\, e_{2m-1}e_{2m}\right).
$$
Similarly as above, we let
$
X_{\tau_h(\gamma)}=
\sum_{j=1}^{m-1} \tfrac{s_j\pi}{q}e_{2j-1}e_{2j} + \tfrac{s_m+(h+h_{q;s})q}{q} \pi e_{2m-1}e_{2m},
$
thus $\exp (X_{\tau_h(\gamma)})=\tau_h(\gamma)$, $\tau_h(\gamma)^\mu=e^{\mu(X_{\tau_h(\gamma)})}$ and
$$
\mu(X_{\tau_h(\gamma)})
= 2\pi i\left(\frac{1}{2q}\sum_{j=1}^m a_js_j + \frac{h+h_{q;s}}{2}\,a_m\right)
$$
for $\mu=\tfrac12\sum_{j=1}^{m} a_j\varepsilon_j\in \ZZ^m$.
Hence, $\tau_h(\gamma)^\mu=1$ if and only if $\sum_{j=1}^m a_js_j+a_m (h+h_{q;s})q\in 2q\Z$.
Since $a_m$ is an odd integer and $\sum_{j=1}^m a_js_j\in\Z$, the assertion follows.
\end{proof}

We next compute the multiplicities $m_{\pi_k^\pm}(\mu)$ to obtain a closed formula for $\dim V_{\pi_k^\pm}^{\tau(\Gamma)}$.
We recall from Section~\ref{sec:congafflattices} that $\norma{\mu}=\tfrac12\sum_{j=1}^m |a_j|$ and $\menos(\mu)=\#\{1\leq j\leq m: a_j<0\}$ if $\mu=\tfrac12(a_1,\dots,a_m)$.

\begin{lemma}\label{lem4:multiplicity}
Let $k$ be a non-negative integer, $\mu\in P(G)$ and $r:=\norma{\Lambda_k^\pm}-\norma{\mu}$.
If $\mu\in \Z^m$ then $m_{\pi_k^\pm}(\mu) = 0$  and, if $\mu\in\ZZ^m$ then
\begin{align*}
m_{\pi_k^+}(\mu) &=
\begin{cases}
\binom{r+m-2}{m-2} &\text{if } r\geq0\text{ and } \menos(\mu)\equiv r\pmod2,\\
0&\text{otherwise},
\end{cases}\\
m_{\pi_k^-}(\mu) &=
\begin{cases}
\binom{r+m-2}{m-2} &\text{if } r\geq0\text{ and } \menos(\mu)\equiv r+1\pmod2,\\
0&\text{otherwise}.
\end{cases}
\end{align*}
\end{lemma}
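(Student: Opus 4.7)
The plan is to realize $\pi_k^{\pm}$ inside an accessible tensor product. The vanishing part is immediate: every weight of $\pi_{\Lambda_k^\pm}$ lies in the coset $\Lambda_k^\pm+Q(G)$, where $Q(G)=\{\sum_j c_j\varepsilon_j:c_j\in\Z,\ \sum_j c_j\in 2\Z\}$ is the root lattice of $\Spin(2m)$. Since $\Lambda_k^\pm\in\ZZ^m$ and $Q(G)\subset\Z^m$, this coset is contained in $\ZZ^m$, so $\mu\in\Z^m$ forces $m_{\pi_k^{\pm}}(\mu)=0$.

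For $\mu\in\ZZ^m$ I would exploit the classical decomposition
\[
  \pi_{k\varepsilon_1}\otimes \pi_0^{\pm} \;=\; \pi_k^{\pm}\,\oplus\,\pi_{k-1}^{\mp} \qquad(k\ge 1),
\]
where $\pi_{k\varepsilon_1}$ is the representation of $\Spin(2m)$ on the degree-$k$ harmonic polynomials on $\R^{2m}$ (highest weight $k\varepsilon_1$) and $\pi_0^{\pm}$ are the half-spin representations with highest weights $\Lambda_0^{\pm}$. This identity can be verified by comparing highest weights and matching dimensions via the Weyl dimension formula. Coupled with the base case $\pi_{0\varepsilon_1}\otimes\pi_0^{\pm}=\pi_0^{\pm}$, it gives the recursion
\[
  m_{\pi_k^{\pm}}(\mu) \;=\; m_{\pi_{k\varepsilon_1}\otimes \pi_0^{\pm}}(\mu)\,-\,m_{\pi_{k-1}^{\mp}}(\mu).
\]

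To evaluate the tensor-product multiplicity I would use the standard fact that the weights of $\pi_0^{+}$ (resp.\ $\pi_0^{-}$) are the $2^{m-1}$ vectors $\sigma=\tfrac12(\epsilon_1,\dots,\epsilon_m)$ with $\epsilon_j\in\{\pm 1\}$ and $\#\{j:\epsilon_j=-1\}$ even (resp.\ odd), each with multiplicity one, together with the classical formula
\[
  m_{\pi_{k\varepsilon_1}}\!\bigl(\textstyle\sum_j b_j\varepsilon_j\bigr)
  \;=\; \binom{\tfrac{k-\sum_j|b_j|}{2}+m-2}{m-2}
\]
(valid when $k-\sum_j|b_j|$ is a nonnegative even integer, and $0$ otherwise). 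Writing $\mu-\sigma=\tfrac12(a_1-\epsilon_1,\dots,a_m-\epsilon_m)$ and observing that $|a_j-\epsilon_j|=|a_j|-\sgn{a_j}\,\epsilon_j$, the tensor-product multiplicity becomes a sum of binomials indexed by the admissible sign patterns $(\epsilon_j)$; the substitution $\delta_j:=\sgn{a_j}\,\epsilon_j$ turns the parity condition on $\prod_j\epsilon_j$ into one on $\prod_j\delta_j$ involving $R(\mu)$.

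The main obstacle lies in the final combinatorial simplification: after substituting the formulas above and inductively subtracting $m_{\pi_{k-1}^{\mp}}(\mu)$, one must telescope the resulting alternating sum via Pascal's identity down to the single binomial $\binom{r+m-2}{m-2}$. The parity condition $R(\mu)\equiv r\pmod 2$ for $\pi_k^{+}$, and $R(\mu)\equiv r+1\pmod 2$ for $\pi_k^{-}$, then emerges from bookkeeping which $\delta$-configurations survive this cancellation, reflecting the dichotomy between the two half-spin components.
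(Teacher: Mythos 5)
Your plan follows the paper's own route almost verbatim: the same vanishing argument via the root-lattice coset, the same decomposition $\pi_{k\varepsilon_1}\otimes\pi_0^\pm=\pi_k^\pm\oplus\pi_{k-1}^\mp$ (the paper gets it from Steinberg's formula, you propose highest weights plus a dimension count, which is fine), the same induction on $k$, the same weight formula for $\pi_{k\varepsilon_1}$ and the same parametrization of the half-spin weights. So the structure is sound, but the proposal stops exactly where the real work is. The step you label ``the main obstacle'' is a genuine gap, not a routine telescoping: after inserting the binomial formula and the induction hypothesis, what has to be verified (say in the representative case where $r$ and $\menos(\mu)$ are both even) is the identity
\begin{equation*}
\sum_{l=0}^{r/2}\binom{m}{2l}\binom{r/2-l+m-2}{m-2}
=\binom{r+m-2}{m-2}+\binom{r-1+m-2}{m-2},
\end{equation*}
and this does not fall out of Pascal's rule applied to an ``alternating sum''; note the left-hand side is a convolution over $l$, not a telescope over $r$. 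The paper proves it by a generating-function computation: both sides are the coefficient of $x^r$ in the even part of $F_0(x)=(1+x)/(1-x)^{m-1}$, using $\tfrac12\bigl(F_0(x)+F_0(-x)\bigr)=\bigl((1+x)^m+(1-x)^m\bigr)/\bigl(2(1-x^2)^{m-1}\bigr)$. Some argument of this kind (or an equivalent Vandermonde-type manipulation) must be supplied; as written, the claimed simplification is asserted, not proved.

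A second, smaller point that your ``bookkeeping'' sentence glosses over: the induction must also deliver the vanishing statements, e.g.\ $m_{\pi_k^-}(\mu)=0$ when $\menos(\mu)\equiv r\pmod 2$. This needs two separate inputs made explicit: (i) for $\nu$ ranging over the weights of the \emph{opposite} half-spin module, $\norma{\nu_0-\nu}$ has the wrong parity, so no term $m_{\pi_{k\varepsilon_1}}(\mu-\nu)$ survives the condition that $r-\norma{\nu_0-\nu}$ be a nonnegative even integer (here $\nu_0=\tfrac12\sum_j\sgn{a_j}\varepsilon_j$ and $\norma{\mu-\nu}=(k-r)+\norma{\nu_0-\nu}$); and (ii) the subtracted term $m_{\pi_{k-1}^\mp}(\mu)$ is controlled by the induction hypothesis, so the statement being proved by induction must include both the binomial and the zero cases simultaneously, as in the paper. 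With these two points written out and the displayed identity actually established, your outline becomes the paper's proof.
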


\begin{proof}
We have that $m_{\pi_k^\pm}(\mu)=0$ for every $\mu\in\Z^m$ since a weight $\mu$ of $\pi_{\Lambda_k^\pm}$ satisfies that $\Lambda_k^\pm-\mu$ is a sum of positive roots.
We now let $\mu\in\ZZ^m$.
We shall prove this case by induction on $k$.
Note that the assertion is clear for $k=0$ since the set of weights of $\pi_0^\pm$ is
\begin{equation}
\mathcal P_0^\pm:= \{\nu=\tfrac12\textstyle\sum_j b_j\varepsilon_j: |b_j|=1\;\forall j\text{ and } R(\nu)\equiv \tfrac{-1\pm1}{2} \pmod2\}
\end{equation}
and $m_{\pi_0^\pm}(\nu)=1$ for those weights.

We assume now that $k\in\N$.
By Steinberg's formula, one can prove that $\pi_{k\varepsilon_1}\otimes \pi_0^\pm$ decomposes into irreducible components as $\pi_k^\pm\oplus\pi_{k-1}^\mp$.
Then
\begin{align}\label{eq4:m_pi_k^pm}
m_{\pi_k^\pm}(\mu)
    &= m_{\pi_{k\varepsilon_1}\otimes \pi_0^\pm}(\mu)-m_{\pi_{k-1}^\mp}(\mu) \\
    &= \sum_{\mu_1+\mu_2=\mu} m_{\pi_{k\varepsilon_1}}(\mu_1) \, m_{\pi_0^\pm}(\mu_2) - m_{\pi_{k-1}^\mp}(\mu)
    \notag\\
    &= \sum_{\nu\in\mathcal P_0^\pm} m_{\pi_{k\varepsilon_1}}(\mu-\nu) - m_{\pi_{k-1}^\mp}(\mu).
    \notag
\end{align}
Furthermore, one has that
\begin{equation}\label{eq4:m_pi_ke_1}
m_{\pi_{k\varepsilon_1}} (\eta) =
\begin{cases}
  \binom{l+m-2}{m-2}\quad&\text{if } \norma{\eta}=k-2l\text{ with }l\in\N_0,\\
  0\quad&\text{otherwise,}
\end{cases}
\end{equation}
for every $\eta\in\Z^m$ (see for instance \cite[Lem.~3.5]{LMR}).

We write $\mu=\tfrac12\sum_i a_i\varepsilon_i\in\ZZ^m$ as $\mu=\eta_0+\nu_0$ where $\nu_0=\tfrac12\sum_i \sgn{a_i} \varepsilon_i \in\mathcal P_0^+\cup\mathcal P_0^-$ ($\sgn{a_i}:=\tfrac{a_i}{|a_i|}$) and $\eta_0=\mu-\nu_0=\tfrac12\sum_i(a_i-\sgn{a_i})\varepsilon_i\in\Z^m$.
Note that all the coordinates of $\mu$ and $\nu_0$ have the same sign, $\norma{\mu}=\norma{\nu_0}+\norma{\eta_0}=\tfrac m2+\norma{\eta_0}$ and, $\nu_0$ is in $\mathcal P_0^+$ or $\mathcal P_0^-$ according to the parity of $R(\mu)=R(\nu_0)$.

Let $r=\norma{\Lambda_k^\pm}-\norma{\mu}\in\Z$, thus $r=k-\norma{\eta_0}$.
For $\nu\in \mathcal P_0^+\cup\mathcal P_0^-$, we have that
\begin{equation}\label{eq4:||mu-nu||_1}
\norma{\mu-\nu} = \norma{\eta_0}+\norma{\nu_0-\nu}= k-r+\norma{\nu_0-\nu}.
\end{equation}
If $r<0$ then $\norma{\mu-\nu}>k$, which implies that $m_{\pi_{k\varepsilon_1}}(\mu-\nu)=0$ for every $\nu\in \mathcal P_0^+\cup\mathcal P_0^-$ by \eqref{eq4:m_pi_ke_1}.
Furthermore, $m_{\pi_{k-1}^\mp}(\mu)=0$ by hypothesis.
Hence, $m_{\pi_{k}^\pm}(\mu)=0$ by \eqref{eq4:m_pi_k^pm} if $r<0$.

Suppose $r\geq0$.
By \eqref{eq4:||mu-nu||_1} and \eqref{eq4:m_pi_ke_1}, the sum at the right hand side of \eqref{eq4:m_pi_k^pm} is reduced to the elements $\nu\in\mathcal P_0^\pm$ such that
\begin{equation}\label{eq4:nu-condition}
r-\norma{\nu_0-\nu}\text{ is nonnegative and even.}
\end{equation}

We next prove the assertion in the case when $r$ and $R(\mu)$ are both even.
The other cases are very similar.
Note that $\norma{\nu_0-\nu}$ is the number of coordinates in which $\nu_0$ and $\nu$ differ.
Then $\norma{\nu_0-\nu}$ and $r-\norma{\nu_0-\nu}$ are odd for any $\nu\in\mathcal P_0^-$ since $R(\nu)$ and $R(\nu_0)$ have different parity.
Furthermore, $m_{\pi_{k-1}^+}(\mu)=0$ by hypothesis.
Hence $m_{\pi_{k}^-}(\mu)=0$.

For $0\leq l\leq r/2$, there are $\binom{m}{2l}$ elements in $\mathcal P_0^+$ coinciding in $2l$ coordinates with $\nu_0$.
For such $\nu$ we have that $m_{\pi_{k\varepsilon_1}}(\mu-\nu)=\binom{r/2-l+m-2}{m-2}$ by \eqref{eq4:m_pi_ke_1}.
Furthermore, $m_{\pi_{k-1}^-}(\mu)=\binom{r-1+m-2}{m-2}$ by hypothesis.
Hence, by \eqref{eq4:m_pi_k^pm}, we obtain that
\begin{equation}\label{eq4:mult-comb}
m_{\pi_k^+}(\mu) = \sum_{l=0}^{r/2} \binom{m}{2l}\,\binom{r/2-l+m-2}{m-2}- \binom{r-1+m-2}{m-2}.
\end{equation}

To prove $m_{\pi_k^+}(\mu) = \binom{r+m-2}{m-2}$,
from \eqref{eq4:mult-comb}, it is sufficient to show the identity
\begin{equation}\label{eq4:mult-comb}
\binom{r+m-2}{m-2}+\binom{r-1+m-2}{m-2} = \sum_{l=0}^{r/2} \binom{m}{2l}\,\binom{r/2-l+m-2}{m-2}.
\end{equation}
One can check that the left hand side is the $r$-th term of the series $F_0(x):=(1+x)/(1-x)^{m-1}$.
Moreover, since we are assuming that $r$ is even, it is also the $r$-th term of the even part of $F_0(x)$, namely, $F_1(x):=\tfrac12(F_0(x)+F_0(-x))$.
We have that
\begin{align*}
F_1(x)
    &= \frac12\left(\frac{1+x}{(1-x)^{m-1}} + \frac{1-x}{(1+x)^{m-1}}\right) \\
    &= \frac{(1+x)^{m} + (1-x)^{m}}{2}\;\frac{1}{(1-x^2)^{m-1}}   \\
    &= \left(\sum_{l=0}^{\lfloor m/2\rfloor} \binom{m}{2l} \; x^{2l}\right) \left(\sum_{h\geq0} \binom{h+m-2}{m-2} \; x^{2h}\right) \\
    &= \sum_{h\geq0\atop h \text{ even}} \left(\sum_{l=0}^{\lfloor m/2\rfloor} \binom{m}{2l} \binom{h/2-l+m-2}{m-2}\right)x^h.
\end{align*}
This implies \eqref{eq4:mult-comb}, which completes the proof.
\end{proof}

We are now in a condition to state the main theorem in this paper, namely, the description of the Dirac spectrum of a lens space endowed with a spin structure.

We recall from Section~\ref{sec:congafflattices} that, for any $\epsilon\in\Z$, $\mathcal L_{\epsilon}$ denotes the subset of $\mathcal L$ given by elements $\mu$ satisfying $R(\mu)\equiv\epsilon\pmod2$.
To facilitate the reading, we introduce more notation.
For $r\geq0$ and $\epsilon\in\Z$, we let
\begin{align}\label{eq4:N_L(epsilon,r)}
\mathcal L_{\epsilon,r}
    &= \{\mu\in\mathcal L_\epsilon: \norma{\mu}=r+\tfrac m2\},\\
N_{\mathcal L}(\epsilon,r)
    &= \#\mathcal L_{\epsilon,r}.\notag
\end{align}

\begin{theorem}\label{thm4:dimV-lens}
Let $L=\Gamma\ba S^{2m-1}$ be a lens space with spin structure $\tau$ and let $\mathcal L=\mathcal L_{\Gamma,\tau}$ be its associated affine congruence lattice.
For $k\geq0$, let $\lambda_k=k+\tfrac{2m-1}{2}$.
Then, the eigenvalues of the Dirac operator on $L$ are  $\pm\lambda_k$ with multiplicity
\begin{align*}
\mult_{(L,\tau)}(-\lambda_k)
    &= \sum_{r=0}^k \tbinom{r+m-2}{m-2} \;N_{\mathcal L} (r,k-r),\\
\mult_{(L,\tau)}(+\lambda_k)
    &= \sum_{r=0}^k \tbinom{r+m-2}{m-2} \;N_{\mathcal L} (r+1,k-r).
\end{align*}
\end{theorem}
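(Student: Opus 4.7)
The theorem assembles three results already in place, so the plan is essentially bookkeeping. First, I will invoke Proposition~\ref{prop2:spectrum} to identify
\[
\mult_{(L,\tau)}(-\lambda_k) = \dim V_{\pi_k^+}^{\tau(\Gamma)},\qquad \mult_{(L,\tau)}(+\lambda_k) = \dim V_{\pi_k^-}^{\tau(\Gamma)}.
\]
Then Lemma~\ref{lem4:dimV-lens} rewrites each of these as
\[
\dim V_{\pi_k^\pm}^{\tau(\Gamma)} = \sum_{\mu\in\mathcal L} m_{\pi_k^\pm}(\mu),
\]
where $\mathcal L=\mathcal L_{\Gamma,\tau}$ is the affine congruence lattice associated to $(L,\tau)$.

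Next, I will substitute the explicit values of $m_{\pi_k^\pm}(\mu)$ from Lemma~\ref{lem4:multiplicity}. Since $\Lambda_k^\pm=\tfrac12((2k+1),1,\ldots,1,\pm1)$ satisfies $\norma{\Lambda_k^\pm}=k+\tfrac{m}{2}$, the parameter $r:=\norma{\Lambda_k^\pm}-\norma{\mu}$ equals $k-s$ whenever one writes $\norma{\mu}=s+\tfrac{m}{2}$. Every $\mu\in\mathcal L\subset\ZZ^m$ automatically has $\norma{\mu}\geq \tfrac{m}{2}$, so $s\geq 0$, and the constraint $r\geq 0$ from Lemma~\ref{lem4:multiplicity} forces $0\leq s\leq k$, i.e., $0\leq r\leq k$. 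Contributions from $\mu\in\Z^m$ vanish by the first clause of Lemma~\ref{lem4:multiplicity}, so only $\mu\in\ZZ^m$ remain.

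Finally, I will group the sum by the value of $r$. For $\mult_{(L,\tau)}(-\lambda_k)$, Lemma~\ref{lem4:multiplicity} says the surviving $\mu$'s are those with $\norma{\mu}=k-r+\tfrac{m}{2}$ and $R(\mu)\equiv r\pmod 2$, each contributing $\binom{r+m-2}{m-2}$; by the definition \eqref{eq4:N_L(epsilon,r)} their count is $N_{\mathcal L}(r,k-r)$, yielding the first formula. For $\mult_{(L,\tau)}(+\lambda_k)$, the parity condition becomes $R(\mu)\equiv r+1\pmod 2$, so the count is $N_{\mathcal L}(r+1,k-r)$, giving the second formula. All of these steps are immediate substitutions, so no genuine obstacle arises; the only point requiring a bit of care is to verify that the index $r$ in the sum truly ranges over $\{0,1,\ldots,k\}$ and matches the parity bookkeeping in the definition of $N_{\mathcal L}(\epsilon,r)$, which is the content of the preceding paragraph.
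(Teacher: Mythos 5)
Your proposal is correct and follows essentially the same route as the paper: combine Proposition~\ref{prop2:spectrum} with Lemma~\ref{lem4:dimV-lens} to write $\mult_{(L,\tau)}(\mp\lambda_k)=\sum_{\mu\in\mathcal L}m_{\pi_k^\pm}(\mu)$, then substitute Lemma~\ref{lem4:multiplicity} and regroup the sum by $r=\norma{\Lambda_k^\pm}-\norma{\mu}$, matching the parity condition with the definition of $N_{\mathcal L}(\epsilon,k)$. The only (harmless) redundancy is the remark about $\mu\in\Z^m$, since $\mathcal L\subset\ZZ^m$ by construction.
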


\begin{proof}
From Proposition~\ref{prop2:spectrum} and Lemma~\ref{lem4:dimV-lens} we have that
$$
\mult_{(L,\tau)}(\mp\lambda_k) = \dim V_{\pi_k^\pm}^{\tau(\Gamma)} = \sum_{\mu\in\mathcal L} m_{\pi_k^\pm}(\mu).
$$
By Lemma~\ref{lem4:multiplicity}, every weight $\mu$ of $\pi_k^\pm$ (i.e.\ $m_{\pi_k^\pm}(\mu)>0$) is in $\ZZ^m$ and satisfies $\norma{\mu}=\norma{\Lambda_k^\pm}-r=(k-r)+\frac m2$ for some non-negative integer $r\leq k$.
Hence,
\begin{align*}
\mult_{(L,\tau)}(-\lambda_k)
    &= \sum_{r=0}^k\sum_{\epsilon=0}^1 \sum_{\mu\in\mathcal L_{r+\epsilon,k-r}} m_{\pi_k^+}(\mu)
    = \sum_{r=0}^k \sum_{\mu\in\mathcal L_{r,k-r}} \tbinom{r+m-2}{m-2},
\end{align*}
which establishes the formulas.
The case $\mult_{(L,\tau)}(+\lambda_k)$ is very similar.
\end{proof}

\begin{definition}
Two subsets $\mathcal L$ and $\mathcal L'$ of $\ZZ^m$ are \emph{oriented $\norma{\cdot}$-isospectral} if $N_{\mathcal L}(\epsilon,k) = N_{\mathcal L'}(\epsilon,k)$ for every non-negative integer $k$ and every $\epsilon=0,1$.
Similarly, we say they are \emph{$\norma{\cdot}$-isospectral} if $N_{\mathcal L}(0,k) + N_{\mathcal L}(1,k) = N_{\mathcal L'}(0,k) + N_{\mathcal L'}(1,k)$ for every $k\geq0$.
\end{definition}

\begin{corollary}\label{cor4:char-lens}
Let $L$ and $L'$ be lens spaces with spin structures $\tau,\tau'$ and associated affine congruence lattices $\mathcal L$ and $\mathcal L'$ respectively.
Then, $L$ and $L'$ are Dirac isospectral if and only if $\mathcal L$ and $\mathcal L'$ are oriented $\norma{\cdot}$-isospectral.
\end{corollary}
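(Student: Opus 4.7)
The forward implication is immediate: if $N_{\mathcal L}(\epsilon,k)=N_{\mathcal L'}(\epsilon,k)$ for all $\epsilon\in\{0,1\}$ and all $k\geq 0$, then the right-hand sides of both formulas in Theorem~\ref{thm4:dimV-lens} coincide for $(L,\tau)$ and $(L',\tau')$, and the Dirac spectra match. The substance of the corollary is the converse, and my plan is to invert the identities of Theorem~\ref{thm4:dimV-lens} via generating functions.

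I will introduce the power series
$$
F_\epsilon(z) := \sum_{k\geq 0} N_{\mathcal L}(\epsilon,k)\,z^k \quad (\epsilon=0,1),\qquad
F^\pm(z) := \sum_{k\geq 0} \mult_{(L,\tau)}(\pm\lambda_k)\,z^k,
$$
together with the parity-split binomial series
$$
g_\epsilon(z) := \sum_{\substack{r\geq 0\\ r\equiv\epsilon\,(2)}} \tbinom{r+m-2}{m-2}z^r.
$$
Reading the two formulas of Theorem~\ref{thm4:dimV-lens} as Cauchy products, and observing that the shift $N_{\mathcal L}(r+1,\cdot)$ in the second formula flips the role of $\epsilon$ relative to the binomial weight $\binom{r+m-2}{m-2}$, gives
$$
F^-(z) = g_0(z)F_0(z)+g_1(z)F_1(z), \qquad
F^+(z) = g_1(z)F_0(z)+g_0(z)F_1(z).
$$

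The next step is to decouple this linear system. Starting from the standard expansion $(1-w)^{-(m-1)}=\sum_r\binom{r+m-2}{m-2}w^r$ applied at $w=z$ and $w=-z$, one obtains
$$
g_0(z)+g_1(z)=(1-z)^{-(m-1)}, \qquad g_0(z)-g_1(z)=(1+z)^{-(m-1)},
$$
so adding and subtracting the two expressions for $F^\pm(z)$ yields
\begin{align*}
F^+(z)+F^-(z) &= (1-z)^{-(m-1)}\bigl(F_0(z)+F_1(z)\bigr),\\
F^-(z)-F^+(z) &= (1+z)^{-(m-1)}\bigl(F_0(z)-F_1(z)\bigr).
\end{align*}
Since $(1\mp z)^{-(m-1)}$ are units in $\Q[[z]]$, these identities recover $F_0(z)+F_1(z)$ and $F_0(z)-F_1(z)$, hence $F_0(z)$ and $F_1(z)$ individually, from the data $F^\pm(z)$ alone.

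Applying the inversion to both spaces: Dirac isospectrality means $F^\pm(z)=(F')^\pm(z)$, which forces $F_\epsilon(z)=F_\epsilon'(z)$ and therefore $N_{\mathcal L}(\epsilon,k)=N_{\mathcal L'}(\epsilon,k)$ for every $\epsilon\in\{0,1\}$ and $k\geq 0$. I do not expect a genuine obstacle here: the only point demanding care is the parity bookkeeping that produces the swap $(g_0,g_1)\leftrightarrow(g_1,g_0)$ between the expressions for $F^-$ and $F^+$, which is a direct consequence of the index shift $r\mapsto r+1$ in the second formula of Theorem~\ref{thm4:dimV-lens}.
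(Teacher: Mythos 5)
Your argument is correct. The forward direction is, as you say, immediate from Theorem~\ref{thm4:dimV-lens}, and your converse is sound: the identities $F^-=g_0F_0+g_1F_1$ and $F^+=g_1F_0+g_0F_1$ are exactly the Cauchy-product reading of the two formulas in Theorem~\ref{thm4:dimV-lens} (using that $N_{\mathcal L}(\epsilon,k)$ depends only on $\epsilon\bmod 2$), the identities $g_0\pm g_1=(1\mp z)^{-(m-1)}$ are the standard binomial series evaluated at $\pm z$, and since these are units in $\Q[[z]]$ you recover $F_0\pm F_1$, hence $F_0$ and $F_1$, from $F^\pm$ alone; Dirac isospectrality is precisely $F^\pm=(F')^\pm$, so the conclusion follows. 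The paper inverts the same triangular relation, but by a different mechanism: it argues by induction on $k$, using that for $r>0$ the terms $N_{\mathcal L}(r+\epsilon^\pm,k_0-r)$ involve strictly smaller second argument and are equal by the inductive hypothesis, so the $r=0$ terms (with coefficient $\binom{m-2}{m-2}=1$) must agree. The paper's induction is more elementary, needing no closed-form series identities, only that the system is unit lower-triangular; your generating-function route buys an explicit inversion, namely $F_0(z)+F_1(z)=(1-z)^{m-1}\bigl(F^+(z)+F^-(z)\bigr)$ and $F_0(z)-F_1(z)=(1+z)^{m-1}\bigl(F^-(z)-F^+(z)\bigr)$, which expresses the lattice-counting data directly in terms of the spectral generating functions of the type \eqref{eq1:F^pm} and is a pleasant complement to the paper's argument.
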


\begin{proof}
Proposition~\ref{prop2:spectrum} and Theorem~\ref{thm4:dimV-lens} imply immediately that $L$ and $L'$ are Dirac isospectral if $\mathcal L$ and $\mathcal L'$ are oriented $\norma{\cdot}$-isospectral.
We now assume that $L$ and $L'$ are Dirac isospectral, thus $\dim V_{\pi_k^\pm}^{\tau(\Gamma)}=\dim V_{\pi_k^\pm}^{\tau(\Gamma')}$ for every non-negative integer $k$.
Write $\epsilon^+=0$ and $\epsilon^-=1$.
We shall prove by induction on $k$ that
\begin{equation}\label{eq4:N(k,e)=N'(k,e)}
N_{\mathcal L}(\epsilon^\pm,k) = N_{\mathcal L'}(\epsilon^\pm,k)
\end{equation}
for every $k \geq0$.
Theorem~\ref{thm4:dimV-lens} implies that $N_{\mathcal L} (\epsilon^\pm,0) = \dim V_{\pi_0^\pm}^{\tau(\Gamma)}=\dim V_{\pi_0^\pm}^{\tau(\Gamma')} = N_{\mathcal L'} (\epsilon^\pm,0)$, thus the case $k=0$ is proved.
Suppose that \eqref{eq4:N(k,e)=N'(k,e)} holds for every $k<k_0$.
By Theorem~\ref{thm4:dimV-lens} we have that
$$
\sum_{r=0}^{k_0} \tbinom{r+m-2}{m-2} \;N_{\mathcal L} (r+\epsilon^\pm,k-r)
=
\sum_{r=0}^{k_0} \tbinom{r+m-2}{m-2} \;N_{\mathcal L'} (r+\epsilon^\pm,k-r).
$$
All the terms with $r>0$ on both sides are equal by assumption, hence this equality implies that $N_{\mathcal L} (\epsilon^\pm,k)=N_{\mathcal L'} (\epsilon^\pm,k)$, which completes the proof.
\end{proof}

\begin{corollary}
Under the same assumptions as in Corollary~\ref{cor4:char-lens}, when $m$ is odd (thus $q$ is odd), $L$ and $L'$ are Dirac isospectral if and only if $\mathcal L$ and $\mathcal L'$ are $\norma{\cdot}$-isospectral.
\end{corollary}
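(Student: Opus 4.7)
The ``only if'' direction is immediate from Corollary~\ref{cor4:char-lens} and the definitions: oriented $\norma{\cdot}$-isospectrality of $\mathcal L$ and $\mathcal L'$ gives $N_{\mathcal L}(\epsilon,k)=N_{\mathcal L'}(\epsilon,k)$ for both $\epsilon=0,1$, and summing the two equalities yields $\norma{\cdot}$-isospectrality. For the converse, the idea is to exploit a symmetry of $\mathcal L$ that is only available when $m$ is odd, namely negation.

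First I would record that the hypothesis forces $q$ to be odd: by Theorem~\ref{thm2:spin-structures}, a lens space with even $q$ admits a spin structure only when $m$ is even. Hence $\mathcal L=\mathcal L(q;s)$ and $\mathcal L'=\mathcal L(q;s')$ are given by the homogeneous congruence \eqref{eq3:L(q;s)}, so both are closed under the map $\mu\mapsto-\mu$, which clearly preserves the one-norm.

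The key observation is the effect of negation on the parity of $\menos$: if $\mu=\tfrac12(a_1,\dots,a_m)\in\ZZ^m$ has $\menos(\mu)$ strictly negative entries, then $-\mu$ has $m-\menos(\mu)$ strictly negative entries (no $a_j$ is zero since each $a_j$ is odd). When $m$ is odd, $m-\menos(\mu)\equiv \menos(\mu)+1 \pmod 2$, so negation swaps $\mathcal L_0$ and $\mathcal L_1$. Restricted to the finite sets $\mathcal L_{\epsilon,k}$ from \eqref{eq4:N_L(epsilon,r)}, negation therefore gives a bijection $\mathcal L_{0,k}\leftrightarrow\mathcal L_{1,k}$, yielding
\begin{equation*}
N_{\mathcal L}(0,k)=N_{\mathcal L}(1,k) \qquad \text{and} \qquad N_{\mathcal L'}(0,k)=N_{\mathcal L'}(1,k)
\end{equation*}
for every $k\geq 0$.

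Finally, combining these equalities with the assumed $\norma{\cdot}$-isospectrality
$$
N_{\mathcal L}(0,k)+N_{\mathcal L}(1,k)=N_{\mathcal L'}(0,k)+N_{\mathcal L'}(1,k),
$$
each side becomes $2N_{\mathcal L}(\epsilon,k)=2N_{\mathcal L'}(\epsilon,k)$, so $\mathcal L$ and $\mathcal L'$ are oriented $\norma{\cdot}$-isospectral, and Corollary~\ref{cor4:char-lens} yields Dirac isospectrality. The only subtle point is verifying that negation really preserves the defining congruence and that no entry of an element of $\ZZ^m$ vanishes; both are immediate, so I anticipate no obstacle beyond the parity bookkeeping.
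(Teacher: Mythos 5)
Your proposal is correct and follows essentially the same route as the paper: both rest on the observation that $\mathcal L$ is stable under $\mu\mapsto-\mu$ and that, for $m$ odd, negation preserves $\norma{\cdot}$ while flipping the parity of $\menos(\mu)$, so $N_{\mathcal L}(0,k)=N_{\mathcal L}(1,k)$ and oriented $\norma{\cdot}$-isospectrality reduces to plain $\norma{\cdot}$-isospectrality. Your explicit remark that $m$ odd forces $q$ odd (so $\mathcal L=\mathcal L(q;s)$ is defined by the homogeneous congruence) is a small but welcome piece of bookkeeping that the paper leaves implicit.
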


\begin{proof}
Since $m$ is odd, we have that $R(\mu)\equiv R(-\mu)+1\pmod2$ for every $\mu\in\ZZ^m$.
Hence $m_{\pi_k^+}(-\mu) = m_{\pi_k^-}(\mu)$ by Lemma~\ref{lem4:multiplicity}.
This implies that $N_{\mathcal L}(0,r) = N_{\mathcal L}(1,r)$ since $\mu\in\mathcal L_{\epsilon,r}$ if and only if $-\mu\in\mathcal L_{\epsilon+1,r}$.
Hence, oriented $\norma{\cdot}$-isospectrality coincides with $\norma{\cdot}$-isospectrality.
\end{proof}

\begin{remark}
It is also reasonable to define \emph{reversed $\norma{\cdot}$-isospectrality} as $N_{\mathcal L}(\epsilon,k)=N_{\mathcal L'}(\epsilon+1,k)$ for every $\epsilon=0,1$.
Then, under the same hypotheses as in Corollary~\ref{cor4:char-lens}, $\mathcal L$ and $\mathcal L'$ are reversed oriented $\norma{\cdot}$-isospectral if and only if $L$ and $L'$ are \emph{inverse Dirac isospectral}, that is, $F_{(L,\tau)}^\pm(z) = F_{(L',\tau')}^\mp(z)$ (see \cite[Prop.~2.15]{Boldt}).
However, we can change the (fixed) orientation in one of the lens spaces to obtain the usual Dirac isospectrality.

Here is another option which works with the already fixed orientation on a lens space.
We multiply exactly one parameter of $L'$ by $-1$, obtaining an isometric new lens space $L''$ (note that the isometry between $L'$ and $L''$ reverses orientations).
Then, we have that $F_{(L,\tau)}^\pm(z) = F_{(L'',\tau'')}^\pm(z)$.
Hence, $L$ and $L''$ are Dirac isospectral.
\end{remark}

\section{Infinite families of Dirac isospectral lens spaces}\label{sec:families}
The goal of this section is to construct examples of Dirac isospectral lens spaces by using Corollary~\ref{cor4:char-lens}.
More precisely, we present three infinite families of Dirac isospectral lens spaces and thereby show that neither spin structures nor isometry classes of lens spaces are spectrally determined.

\begin{remark}\label{rem5:trivial-isosp}
There are some ``trivial'' occurrences of Dirac isospectrality given by lens spaces with two different spin structures related by an isometry.
Let us have a look at an example.
Let $L:=L(16;1,3,5,7)$, which admits two different spin structures $\tau_0$ and $\tau_1$ by Proposition~\ref{thm2:spin-structures}.
The function $\Psi:S^7\to S^7$, $(z_1,z_2,z_3,z_4)\mapsto (z_3,\overline z_1,\overline z_4,z_2)$ induces an isometry on $L$ that takes $\tau_0$ to $\tau_1$.
Indeed, this follows from Proposition~\ref{prop2:lens-isom-explicit} by choosing the permutation $\sigma=(1 3)(3 4)(4 2)$, signs $\epsilon_2=\epsilon_3=-\epsilon_1=-\epsilon_4=1$ and $\ell=11$.
\end{remark}

We start by giving a sequence of families of Dirac isospectral lens spaces with increasing cardinality and dimension and a fixed order of the fundamental group.

\begin{theorem}\label{thm5:increasing}
Let $q=40$.
For each $r\geq1$ we set $m=4r+2$
and, for each $0\leq p\leq \tfrac{m-2}{4}=r$ we let
\begin{equation*}
s^{(p)}=(\underbrace{1,11,\dots,1,11}_{m-2p}, \underbrace{21,31,\dots,21,31}_{2p}).
\end{equation*}
Then, the lens spaces in the family $\{L(q,s^{(p)}):0\leq p\leq r\}$ are pairwise non-isometric and,
when each is endowed with the spin structure $\tau_0^{(p)}$ (see \eqref{eq2:spin-str-even}), they are pairwise Dirac isospectral.
The dimension of the lens spaces is $8r+3$ and the cardinality of the family is $r+1$.
\end{theorem}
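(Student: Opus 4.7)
The plan treats the two assertions separately. For pairwise non-isometry, I apply Proposition~\ref{prop2:lens-isom}: $L(40;s^{(p)})$ and $L(40;s^{(p')})$ are isometric iff there exist $\ell\in(\Z/40\Z)^\times$, signs $\epsilon_j\in\{\pm1\}$, and a permutation $\sigma$ with $\ell\epsilon_j s^{(p)}_j\equiv s^{(p')}_{\sigma(j)}\pmod{40}$. A direct check shows that $H:=\{1,11,21,31\}$ is a subgroup of $(\Z/40\Z)^\times$ isomorphic to $(\Z/2)^2$, and the condition forces $\ell\in H\cup(-H)$ with all $\epsilon_j$'s sharing the sign of $\ell$. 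Within this pool, $\pm 1$ and $\pm 11$ preserve the partition $\{1,11\}\sqcup\{21,31\}$ of $H$, while $\pm 21$ and $\pm 31$ interchange its two parts. Hence the count $m-2p$ of entries of $s^{(p)}$ lying in $\{1,11\}$ is invariant up to this swap, forcing either $p=p'$ or $p'=2r+1-p$; since $2r+1-p>r$ whenever $p\in\{0,\dots,r\}$, we conclude $p=p'$.

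For Dirac isospectrality, I invoke Corollary~\ref{cor4:char-lens}: it suffices to show the affine congruence lattices $\mathcal L_p:=\mathcal L(40;s^{(p)};0)$ are pairwise oriented $\norma{\cdot}$-isospectral, where $h_{40;s^{(p)}}=0$ because all coordinates of $s^{(p)}$ lie in $[0,40)$. By transitivity, it is enough to construct, for each $0\le p<r$, a bijection $\Phi_p:\mathcal L_p\to\mathcal L_{p+1}$ preserving both $\norma\cdot$ and $R\pmod 2$.

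The guiding identity is
\[
\sum_j a_js_j^{(p)}=T(\mu)+40\sum_{k\in B_p}T_k(\mu),
\]
where $T(\mu):=\sum_{k=1}^{m/2}(a_{2k-1}+11a_{2k})$ is independent of the A/B partition, $T_k(\mu):=(a_{2k-1}+a_{2k})/2\in\Z$, and $B_p=\{m/2-p+1,\dots,m/2\}$ indexes the B-pairs. Since $40T_k\pmod{80}$ equals $0$ or $40$ according to the parity of $T_k$, the lattices $\mathcal L_p$ and $\mathcal L_{p+1}$ agree on $\{\mu:T_{k_0}(\mu)\text{ even}\}$ (where $k_0=m/2-p$), while on the complement their defining mod-$80$ conditions differ by exactly $40$. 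I define $\Phi_p$ to be the identity on the even part and, on the odd part, modify the $k_0$-th pair $(a_{2k_0-1},a_{2k_0})$ together with, when needed, an A-pair from the first $m-2p$ coordinates, via a combination of sign flips and coordinate swaps designed to shift the defining sum by $40\pmod{80}$ while preserving $\norma\mu$ and $R(\mu)\pmod 2$.

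The main technical obstacle is the construction of $\Phi_p$ on the odd-$T_{k_0}$ part, since purely local (single-pair) modifications of $(a_{2k_0-1},a_{2k_0})$ fail in borderline cases such as $(\pm 1,\pm 1)$, where no pair-local replacement simultaneously satisfies the congruence, norm, and orientation constraints. The resolution is a multi-pair modification, exploiting the $2r+1-p\ge 2$ available A-pairs as ``partners'' with enough freedom (sign flips within the partner pair together with transpositions) to produce the required $40\pmod{80}$ shift. Iterating $\Phi_0,\dots,\Phi_{r-1}$ then gives oriented $\norma{\cdot}$-isospectrality throughout the family, hence Dirac isospectrality via Corollary~\ref{cor4:char-lens}. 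The dimension is $2m-1=8r+3$ and the cardinality of the family is $r+1$, as claimed.
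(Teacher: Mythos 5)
Your non-isometry argument is correct and in fact more detailed than the paper, which simply cites Proposition~\ref{prop2:lens-isom}: the observation that $\{1,11,21,31\}$ is a Klein four-subgroup of $(\Z/40\Z)^\times$ disjoint from its negative, so that all $\epsilon_j$ must agree and the effective action is multiplication by a single element of $H$, correctly forces $p'=p$ or $p'=2r+1-p$, and the latter is out of range. The reduction of the isospectrality claim via Corollary~\ref{cor4:char-lens} to oriented $\norma{\cdot}$-isospectrality of the lattices $\mathcal L(40;s^{(p)};0)$, and the identity $\sum_j a_js_j^{(p)}=T(\mu)+40\sum_{k\in B_p}T_k(\mu)$ showing that $\mathcal L_p$ and $\mathcal L_{p+1}$ coincide where $T_{k_0}$ is even and have congruence conditions differing by $40\pmod{80}$ where it is odd, are also correct and consistent with the paper's setup.

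However, there is a genuine gap at exactly the point you flag as the ``main technical obstacle'': you never construct the map $\Phi_p$ on the odd-$T_{k_0}$ part. Saying that a ``multi-pair modification'' with ``enough freedom'' will ``produce the required $40\pmod{80}$ shift'' is not a proof; one must exhibit a specific, well-defined map, verify that it lands in $\mathcal L_{p+1}$, that it preserves $\norma{\cdot}$ and the parity of $R$, and --- crucially --- that it is a bijection, which for element-dependent local moves requires a canonical inverse (e.g.\ an involution compatible with the membership conditions). Nothing in your sketch guarantees this; indeed single-pair moves fail (as you note), sign flips disturb $R$, and intra-pair swaps shift the congruence by $10(a-b)\in\{0,20,40,60\}\pmod{80}$ depending on the element, so organizing such moves into a bijection is precisely the content of the theorem. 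A further warning sign is that your outline never uses that $m/2=2r+1$ is odd, whereas the paper's proof hinges on it: the paper compares $\mathcal L_0$ with $\mathcal L_p$ directly and uses one global move --- swap every pair $(a_j,b_j)\mapsto(b_j,a_j)$ when $\sum_{j\in B_p}(a_j+b_j)\equiv2\pmod4$, identity otherwise --- where the swap changes $T(\mu)$ by $40\sum_j b_j\equiv40\pmod{80}$ precisely because the number of pairs is odd, and this cancels the $40$ coming from the B-sum. Your chain $\mathcal L_p\to\mathcal L_{p+1}$ could likely be repaired by importing this global swap (conditioned on the parity of $T_{k_0}$ rather than of the full B-sum, with the same parity mechanism), but as written the central construction is missing, so the isospectrality half of the proof is incomplete.
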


\begin{proof}
Throughout the proof, we fix the positive integer $r$.
We will show that $(L(q,s^{(0)}),\tau_0^{(0)})$ and $(L(q,s^{(p)}),\tau_0^{(p)})$ are Dirac isospectral for each $1\leq p\leq r$.
From Corollary~\ref{cor4:char-lens}, this is equivalent to proving that the associated affine congruence lattices $\mathcal L_0:=\mathcal L(q;s^{(0)};0)$ and $\mathcal L_p:=\mathcal L(q;s^{(p)};0)$ are oriented $\norma{\cdot}$-isospectral.

Let $\mu=\tfrac12(a_1,b_1,\dots,a_{\tfrac{m}{2}},b_{\tfrac{m}{2}})\in\ZZ^m$.
Then
\begin{align*}
\mu&\in \mathcal L_0 &\Longleftrightarrow &&
\sum_{j=1}^{m/2} (a_j+11b_j)&\equiv 0\pmod{80},\\
\mu&\in \mathcal L_p &\Longleftrightarrow &&
20\sum_{j=\tfrac m2-p+1}^{p} (a_j+b_j) +\sum_{j=1}^{m/2} (a_j+11b_j)&\equiv 0\pmod{80}.\\
\end{align*}
We consider the map
\begin{equation*}
\tfrac12(a_1,b_1,\dots,a_{\tfrac{m}{2}},b_{\tfrac{m}{2}}) \longmapsto
\begin{cases}
\tfrac12(a_1,b_1,\dots,a_{\tfrac{m}{2}},b_{\tfrac{m}{2}})
    &\quad\text{if }\displaystyle\sum_{j=\tfrac m2-p+1}^{p} (a_j+b_j)\equiv0\pmod4,\\
\tfrac12(b_1,a_1,\dots,b_{\tfrac{m}{2}},a_{\tfrac{m}{2}})
    &\quad\text{if }\displaystyle\sum_{j=\tfrac m2-p+1}^{p} (a_j+b_j)\equiv2\pmod4.
\end{cases}
\end{equation*}
It follows that this map gives a bijection between $\mathcal L_0$ and $\mathcal L_p$.
The second row in the map works as follows.
If $\tfrac12(a_1,b_1,\dots,a_{\tfrac{m}{2}},b_{\tfrac{m}{2}})\in\mathcal L_0$, then $0\equiv11\sum_{j=1}^{m/2} (a_j+11b_j)\equiv \sum_{j=1}^{m/2} (b_j+11a_j)+40(\sum_{j=1}^{m/2} b_j) \equiv \sum_{j=1}^{m/2} (b_j+11a_j)+40\pmod{80}$ since $m/2$ is odd, hence $\tfrac12(b_1,a_1,\dots,b_{\tfrac{m}{2}},a_{\tfrac{m}{2}})\in\mathcal L_p$ since $20\sum_{j=\tfrac m2-p+1}^{p} (b_j+a_j) +\sum_{j=1}^{m/2} (b_j+11a_j) \equiv 40+40\equiv 0\pmod{80}$.
This proves that $\mathcal L_0$ and $\mathcal L_p$ are oriented $\norma{\cdot}$-isospectral since the bijection preserves $\norma{\cdot}$ and $R(\cdot)$.

The non-isometry between the lens spaces follows from Proposition~\ref{prop2:lens-isom}.
\end{proof}

\begin{remark}
It is curious that for every $0\leq p\leq \tfrac{m-2}{4}$, one can check that $\mathcal L(40;s^{(0)})=\mathcal L(40;s^{(p)})$ (see \eqref{eq3:L(q;s)}) though the lens spaces $L(40;s^{(0)})$ and $L(40;s^{(p)})$ are not isometric.
Furthermore, $\mathcal L(40;s^{(p)};0)$ and $\mathcal L(40;s^{(p)};1)$ are $\norma{\cdot}$-isometric thus there is an isometry of the lens space $L(40;s^{(p)})$ that switches the spin structures $\tau_0^{(p)}$ and $\tau_1^{(p)}$.
\end{remark}

We continue with a family of pairs of Dirac isospectral lens spaces that have the same underlying manifold but different spin structures (see Subsection~\ref{subsec:lens}).

\begin{theorem}\label{thm5:q-even}
For any $\rr\geq1$, we consider the lens space $$L=L(32\rr; 1,1+4\rr,1+16\rr,1+28\rr).$$
Then, $L$ does not carry an isometry which takes the spin structure $\tau_0$ to $\tau_1$, but is Dirac isospectral to itself when equipped with the two different spin structures.
\end{theorem}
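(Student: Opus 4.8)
The plan is to route both claims through the associated affine congruence lattices. Here $m=4$ and $q=32\rr$ is even, so by Theorem~\ref{thm2:spin-structures} $L$ has exactly the two spin structures $\tau_0,\tau_1$ of \eqref{eq2:spin-str-even}; since $1+28\rr<32\rr$ for $\rr\ge1$ we have $0\le s_j<q$ for all $j$, hence $h_{q;s}=0$, and by Lemma~\ref{lem4:dimV-lens} the lattices associated to $(L,\tau_0)$ and $(L,\tau_1)$ are $\mathcal L_0:=\mathcal L(q;s;0)$ and $\mathcal L_1:=\mathcal L(q;s;1)$. By Corollary~\ref{cor4:char-lens}, the Dirac isospectrality assertion is equivalent to $N_{\mathcal L_0}(\epsilon,k)=N_{\mathcal L_1}(\epsilon,k)$ for all $k\ge0$ and $\epsilon\in\{0,1\}$, i.e.\ to $\mathcal L_0$ and $\mathcal L_1$ being oriented $\norma{\cdot}$-isospectral.

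For this I would first record that $\tfrac12(a_1,a_2,a_3,a_4)\in\mathcal L_h$ exactly when $\sum_j a_j+4\rr(a_2+4a_3+7a_4)\equiv 32\rr h\pmod{64\rr}$, and then construct an explicit bijection $\mathcal L_0\to\mathcal L_1$ defined piecewise according to the residues of the $a_j$ modulo $4$ and modulo $8$. On each piece the map acts by an orientation-preserving $\norma{\cdot}$-isometry of $\ZZ^4$ in the sense of \eqref{eq3:one-norm-isometry}, built from coordinate transpositions (for instance, interchanging the first and third coordinates shifts the left-hand side above by $16\rr(a_1-a_3)$, and interchanging the second and fourth shifts it by $24\rr(a_2-a_4)$) together with, on the slices no transposition handles, sign changes in pairs of coordinates. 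Such maps automatically preserve $\norma{\cdot}$ and the parity of $\menos(\cdot)$ (Section~\ref{sec:congafflattices}), and the defining congruence on each piece is chosen precisely so that the move changes the residue of $\sum_j a_js_j$ modulo $64\rr$ by exactly $32\rr$, carrying that piece of $\mathcal L_0$ into $\mathcal L_1$. The Dirac isospectrality then follows from Corollary~\ref{cor4:char-lens} once one checks that the pieces exhaust $\mathcal L_0$ and that their images exhaust $\mathcal L_1$.

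For the first claim I would argue by contradiction: suppose some isometry $\psi$ of $L$ takes $\tau_0$ to $\tau_1$. By Proposition~\ref{prop2:lens-isom-explicit} such a $\psi$ comes from a permutation $\sigma$ of $\{1,\dots,4\}$, signs $\epsilon_1,\dots,\epsilon_4\in\{\pm1\}$ and $\ell\in\Z$ coprime to $q$ with $\ell\epsilon_j s_j\equiv s_{\sigma(j)}\pmod q$ for all $j$; moreover, since $s'=s$ and $h_{q;s}=h_{q;s'}=0$, it sends $\tau_0$ to $\tau_{h'}$ with $h'\equiv\sum_j\rho_j\pmod2$, where $\rho_j=(\ell\epsilon_j s_j-s_{\sigma(j)})/q$. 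So it suffices to show $\sum_j\rho_j$ is always even. Reducing the congruences modulo $4\rr$ and using that $s_j\equiv1\pmod{4\rr}$ for every $j$, one gets $\ell\epsilon_j\equiv1\pmod{4\rr}$; since $4\rr>2$, this forces $\epsilon_1=\dots=\epsilon_4=:\epsilon$ and $\ell\epsilon=1+4\rr w$ for some $w\in\Z$. Writing $s_j=1+4\rr c_j$ with $(c_1,c_2,c_3,c_4)=(0,1,4,7)$, one computes $\rho_j=\tfrac18\big(c_j-c_{\sigma(j)}+w+4\rr w c_j\big)$; summing over $j$ forces $w$ even, and then the integrality of each $\rho_j$ forces $c_{\sigma(j)}\equiv c_j+w\pmod8$ for every $j$, i.e.\ $\sigma$ translates the set $\{0,1,4,7\}$ by $w$ modulo $8$. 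Inspecting the four even shifts modulo $8$, this is only possible for $w\equiv0\pmod8$ and $\sigma=\Id$, whence $\sum_j\rho_j=\tfrac w8\sum_j s_j=\tfrac w2(1+12\rr)$, which is even — a contradiction.

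The congruence bookkeeping in the last two paragraphs is routine. I expect the real difficulty to be in the Dirac isospectrality, specifically in the combinatorial step: a single coordinate transposition changes the residue of $\sum_j a_js_j$ by $32\rr$ only on a congruence slice of the coordinates, so the bijection must be patched together from several moves on complementary slices (with two-coordinate sign flips on the remaining ones), and one has to verify that this finite list of moves tiles $\mathcal L_0$ and that their images tile $\mathcal L_1$, while each move individually preserves $\norma{\cdot}$ and the parity of $\menos(\cdot)$.
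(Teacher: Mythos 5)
Your treatment of the first assertion is correct and complete, and it takes a genuinely different route from the paper's: the paper computes the inverses of the entries of $s$ modulo $32\rr$ and checks that multiplying $s$ by them never yields a signed permutation of $s$, whereas you reduce the congruences $\ell\epsilon_js_j\equiv s_{\sigma(j)}\pmod{32\rr}$ modulo $4\rr$ to force $\epsilon_1=\dots=\epsilon_4$ and $\ell\epsilon=1+4\rr w$, then use integrality of the $\rho_j$ to pin down $\sigma=\Id$ and $8\mid w$, so that $\sum_j\rho_j=\tfrac w2(1+12\rr)$ is even; I checked the congruence bookkeeping and it is right, and both arguments rest on the same reduction via Proposition~\ref{prop2:lens-isom-explicit}.

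The isospectrality half, however, is the substance of the theorem and your proposal does not prove it. The reduction via Lemma~\ref{lem4:dimV-lens} and Corollary~\ref{cor4:char-lens} to showing that $\mathcal L(q;s;0)$ and $\mathcal L(q;s;1)$ are oriented $\norma{\cdot}$-isospectral is the same as the paper's, but the bijection you then sketch is exactly the step you defer, and as described it would fail. The two transpositions you name apply on the slices $a_1-a_3\equiv2\pmod4$ and $a_2-a_4\equiv4\pmod8$; using the membership congruence and the parity lemma of the paper, inside $\mathcal L(q;s;h)$ these slices are precisely the strata where $\gamma_\mu:=(a_1+a_2+a_3+a_4)/8\rr$ is odd, respectively $\equiv0\pmod4$, and they are the two cases the paper also disposes of with the maps $(a,b,c,d)\mapsto(c,b,a,d)$ and $(a,b,c,d)\mapsto(c,d,a,b)$. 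The leftover slice is exactly the stratum $\gamma_\mu\equiv2\pmod4$, which the paper explicitly notes admits no $\norma{\cdot}$-preserving bijection of this kind and instead handles by a root-of-unity count of lattice points orthant by orthant. Your fallback of ``sign changes in pairs of coordinates'' cannot be organized by residues of the $a_j$ modulo $4$ or $8$: flipping the signs of coordinates $j$ and $k$ changes $\sum_i a_is_i$ by $-2(a_js_j+a_ks_k)$, whose class modulo $64\rr$ depends on the coordinates modulo $32\rr$ (i.e.\ on their actual size), so no finite list of mod-$8$ slices can decide when such a move carries $\mathcal L(q;s;0)$ into $\mathcal L(q;s;1)$; and even granting a covering family of moves, you give no argument that the patched map is well defined, injective and onto. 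So the key identity $N_{\mathcal L(q;s;0)}(\epsilon,k)=N_{\mathcal L(q;s;1)}(\epsilon,k)$ for all $k$ and $\epsilon$ — which in the paper requires the stratification by $\gamma_\mu\bmod4$ plus a nontrivial character-sum computation in the stratum $\gamma_\mu\equiv2\pmod4$ — remains unproved in your proposal.
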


\begin{proof}
By Proposition~\ref{prop2:lens-isom-explicit}, to check that there is no isometry relating the two spin structures, it is sufficient to show that multiplying the parameter tuple $s:=(1,1+4\rr ,1+16\rr ,1+28\rr )$ by the inverses modulo $q:=32\rr$ of its entries does not produce, up to sign in each entry, a permutation of the parameter tuple $s$.
This is easily seen to be the case since the inverses of $1$, $1+4\rr$, $1+16\rr$, $1+28\rr$ are $1$, $1+4\rr (2^\alpha4-1)$, $1+16\rr$, $1+4\rr(2^\alpha4+1)$ respectively, where $\rr=2^{\alpha}\rr_1$ with $\rr_1$ odd.

We now prove Dirac isospectrality by showing that the affine congruence lattices $\mathcal L^{(0)}:=\mathcal L(q;s;0)$ and $\mathcal L^{(1)}:=\mathcal L(q;s;1)$ are oriented $\norma{\cdot}$-isospectral, which is sufficient by Corollary~\ref{cor4:char-lens}.
By definition (see \eqref{eq3:L(q;s;h)}), $\mu=\tfrac12(a,b,c,d)\in\mathcal L^{(h)}$ if and only if $a,b,c,d$ are odd integer numbers such that
$$
a+b(1+4\rr)+c(1+16\rr)+d(1+28\rr) \equiv 32\rr h\pmod{64\rr},
$$
or equivalently,
\begin{equation*}
(a+b+c+d)+8\rr\left(\frac{b+7d}{2}+2c\right) \equiv 32\rr h\pmod{64\rr}.
\end{equation*}

Note that if $\mu=\tfrac12(a,b,c,d)\in\mathcal L:=\mathcal L^{(0)}\cup \mathcal L^{(1)}$, then $a+b+c+d\equiv 0\pmod{8\rr}$.
For $\mu\in\ZZ^4$ we let $\gamma_\mu=(a+b+c+d)/8\rr$.
Now, $\mu\in\ZZ^4$ is in $\mathcal L^{(h)}$ if and only if $\gamma_\mu\in\Z$ and
\begin{equation}\label{eq5:L_qsh_q=even2}
\gamma_\mu+\frac{b-d}{2}+2c+4 \equiv 4h\pmod{8}
\end{equation}
We will prove that
\begin{equation}\label{eq5:card(L_qsh)}
\#\{\mu\in \mathcal L^{(0)}_{\epsilon,k}: \gamma_\mu=\gammab \}=\#\{\mu\in \mathcal L^{(1)}_{\epsilon,k}: \gamma_\mu=\gammab \}
\end{equation}
for every $\gammab\in\Z$, $k\geq0$ and $\epsilon=0,1$.
The next lemma is elementary and will be used many times in the sequel.

\begin{lemma}\label{lem5:(b-d)mod4}
Let $\mu=\tfrac12(a,b,c,d)\in\mathcal L$.
Then, $b-d\equiv2\pmod 4$ $\iff$ $b+d\equiv0\pmod 4$ $\iff$ $a+c\equiv0\pmod 4$ $\iff$ $a-c\equiv2\pmod 4$, and also $b-d\equiv0\pmod 4$ $\iff$ $b+d\equiv2\pmod 4$ $\iff$ $a+c\equiv2\pmod 4$ $\iff$ $a-c\equiv0\pmod 4$.
\end{lemma}
\begin{proof}
We have that $a+b+c+d\equiv0\pmod 4$ since $\mu\in\mathcal L$.
The claim follows immediately since $a,b,c,d$ are odd integers.
\end{proof}

We proceed with the proof of \eqref{eq5:card(L_qsh)} and split it into three cases.
First assume $\gammab\equiv0\pmod 4$.
Define
\begin{equation*}
\Psi_1:\ZZ^4\longrightarrow\ZZ^4, \qquad \tfrac12(a,b,c,d)\longmapsto \tfrac12(c,d,a,b).
\end{equation*}
We will show that $\Psi_1$ induces a bijection between the sets in \eqref{eq5:card(L_qsh)}.
Clearly, $\Psi_1$ preserves the functions $\norma{\cdot}$, $R(\cdot)$ and $\gamma_{(\cdot)}$, thus it is sufficient to show that $\mu\in \mathcal L^{(0)}$ if and only if $\Psi_1(\mu)\in\mathcal L^{(1)}$ for any $\mu\in\ZZ^4$ satisfying $\gamma_\mu= \gammab$.

Let $\mu'=\tfrac12(a',b',c',d')=\Psi_1(\mu)$, where $\mu\in\mathcal L^{(0)}$ with $\gamma_\mu=\gammab$ divisible by $4$.
We check that $\mu'\in\mathcal L^{(1)}$ by using the condition \eqref{eq5:L_qsh_q=even2}.
We have that
\begin{align*}
\gamma_{\mu'} + \tfrac{b'-d'}{2} +2c'+4
  &\equiv \gamma_{\mu} + \tfrac{d-b}{2} +2a+4 \pmod{8},\\
  &\equiv \gamma_{\mu} + \tfrac{b-d}{2} +2c+4 +(d-b)+2(a-c) \pmod{8},\\
  &\equiv (d-b)+2(a-c) \pmod{8}.
\end{align*}
On the other hand, since $\mu\in\mathcal L^{(0)}$, \eqref{eq5:L_qsh_q=even2} implies that $\tfrac{b-d}{2}\equiv2\pmod4$ because $\gamma_\mu$ is divisible by $4$, thus $d-b\equiv4\pmod8$.
In particular $b-d$ is divisible by $4$, then $a-c\equiv 0\pmod4$ by Lemma~\ref{lem5:(b-d)mod4}.
Hence $\gamma_{\mu'} + \tfrac{b'-d'}{2} +2c'+4 \equiv 4\pmod8$, that is, $\Psi_1(\mu)\in \mathcal L^{(1)}$.
The converse is very similar.

We now consider the case $\gammab\equiv1\pmod2$.
We define
\begin{equation*}
\Psi_2:\ZZ^4\longrightarrow \ZZ^4, \qquad \tfrac12(a,b,c,d)\longmapsto \tfrac12(c,b,a,d).
\end{equation*}
Again, $\Psi_2$ preserves $\norma{\cdot}$, $R(\cdot)$ and $\gamma_{(\cdot)}$.
Let $\mu=\tfrac12(a,b,c,d)\in \mathcal L^{(0)}$ such that $\gamma_\mu=\gammab$, and write $\mu'=\tfrac12(a',b',c',d')=\Psi_2(\mu)$.
One can check that
\begin{align*}
\gamma_{\mu'} + \tfrac{b'-d'}{2} +2c'+4
  &\equiv 2(a-c) \pmod{8}.
\end{align*}
By \eqref{eq5:L_qsh_q=even2}, $(b-d)/2\equiv-\gamma_\mu\equiv 1\pmod2$, then $b-d\equiv 2\pmod4$, thus $a-c\equiv 2\pmod4$ by Lemma~\ref{lem5:(b-d)mod4}.
Hence $\mu'\in \mathcal L^{(1)}$.

We now assume $\gammab\equiv2\pmod4$.
This case is the most involved since there is no $\norma{\cdot}$-preserving bijection between the sets in \eqref{eq5:card(L_qsh)}.
We will check \eqref{eq5:card(L_qsh)} in each of the $16$ orthants of $\ZZ^4$.
Actually, since $\mu\in\mathcal L^{(h)}$ if and only if $-\mu\in\mathcal L^{(h)}$, it is sufficient to check half of the orthants.
We will refer to orthants by the signs of the coordinates of their elements.
For example, ${+}{-}{+}{-}$ refers to the orthant of $\ZZ^4$ given by elements $\tfrac12(a,b,c,d)\in\ZZ^4$ satisfying $a,c>0$ and $b,d<0$.

We start with the orthant ${+}{+}{+}{+}$ of $\ZZ^4$, that is, the orthant where all the coordinates are positive.
Every element in this orthant with one-norm $k$ can be written as
\begin{equation}\label{eq5:q-even-mu++++}
\mu=\tfrac12(a,b,c,d)=\tfrac12\big(\, 2(k-2-x)+1, \,2(x-y)+1, \,2(y-z)+1, \,2z+1\big),
\end{equation}
with $0\leq z\leq y\leq x\leq k-2$ integer numbers.
Note that $\gamma_\mu = 2\norma{\mu}/8r$, thus we will assume $k=4r\gammab$.
By \eqref{eq5:L_qsh_q=even2}, $\mu\in\mathcal L^{(h)}$ if and only if
\begin{align*}
 \gammab+6 +x +3y-5z \equiv 4h\pmod8.
\end{align*}
We assume that $\gammab\equiv 2\pmod 8$, while the case $\gammab\equiv 6\pmod 8$ is analogous.
Set $\xi=e^{2\pi i/8}$, a primitive $8$-th root of unity.
In the following we often use the fact that $\sum_{j=1}^{n} w^j$ is equal to $\tfrac{w^{n+1}-1}{w-1}$ if $w\neq1$ and equal to $n$ if $w=1$.
In particular $\tfrac18\sum_{l=0}^{7} \xi^{jl}$ is equal to $0$ if $j\not\equiv 0\pmod 8$ and equal to $1$ if $j\equiv0\pmod 8$.

Clearly, the number $N_{\mathcal L^{(h)},\gammab}^{{+}{+}{+}{+}}(k)$ of elements $\mu\in\mathcal L^{(h)}$ in the orthant ${+}{+}{+}{+}$ with $\gamma_\mu=\gammab$ and $\norma{\mu}=k$ is equal to
\begin{align}\label{eq5:q-even-N++++1}
N_{\mathcal L^{(h)},\gammab}^{{+}{+}{+}{+}}(k)
&= \sum_{x=0}^{k-2} \sum_{y=0}^{x} \sum_{z=0}^{y} \frac18 \sum_{l=0}^{7} \xi^{l(x +3y-5z-4h)}
= \frac18 \sum_{l=0}^{7} (-1)^{hl} \sum_{x=0}^{k-2} \xi^{lx}\sum_{y=0}^{x} \xi^{3ly} \sum_{z=0}^{y} \xi^{3lz}.
\end{align}
Our goal is to prove that $N_{\mathcal L^{(0)},\gammab}^{{+}{+}{+}{+}}(k)=N_{\mathcal L^{(1)},\gammab}^{{+}{+}{+}{+}}(k)$, or equivalently, that $N_{\mathcal L^{(h)},\gammab}^{{+}{+}{+}{+}}(k)$ does not depend on $h$.
It is clear that for every even $\ell$ the terms in \eqref{eq5:q-even-N++++1} for $h=0$ and for $h=1$ coincide.
It is thus sufficient to show that any term with $l$ odd vanishes.
We suppose $l\in\{1,3,5,7\}$, then
\begin{align*}
A_l
:=&\; \frac{(-1)^{lh}}{8} \sum_{x=0}^{k-2} \xi^{lx}\sum_{y=0}^{x} \xi^{3ly}  \sum_{z=0}^{y} \xi^{3lz}
 =\; \frac{(-1)^{h}}{8} \sum_{x=0}^{k-2} \xi^{lx}\sum_{y=0}^{x} \xi^{3ly}  \,\frac{(\xi^{3l})^{y+1}-1}{\xi^{3l}-1}.
\end{align*}
Note that we have used that $\xi^{3l}\neq1$ in the last step.
By straightforward computations we obtain that
\begin{align*}
A_l
&= \frac{(-1)^{h}\, \xi^{3l}}{(\xi^{3l}-1)(\xi^{6l}-1)} \sum_{x=0}^{k-2} \big(\xi^{6l} \xi^{7lx}-\xi^{lx}\big)
-\frac{(-1)^{h}}{(\xi^{3l}-1)^2} \sum_{x=0}^{k-2}  \big(\xi^{3l} (-1)^{x}-\xi^{lx}\big).
\end{align*}
Since $k=4r\gammab$ is divisible by $8$, it follows that $\sum_{x=0}^{k-2} \big(\xi^{6l} \xi^{7lx}-\xi^{lx}\big) = -\xi^{6l} \xi^{-7l}+\xi^{-l}=0$ and $\sum_{x=0}^{k-2}  \big(\xi^{3l} (-1)^{x}-\xi^{lx}\big)= \xi^{3l} +\xi^{-l}=0$.
Hence $A_l=0$.
This concludes the proof for the case ${+}{+}{+}{+}$.

The same method we used for the orthant ${+}{+}{+}{+}$ can be applied in the remaining cases.
These are even simpler than in ${+}{+}{+}{+}$ since one obtains two free parameters instead of three free parameters.
As an example, we consider the orthant ${+}{+}{-}{-}$.

Any element in the orthant ${+}{+}{-}{-}$ whose one-norm equals $k$ is of the form
\begin{equation}\label{eq5:q-even-mu++--}
\mu=\tfrac12(a,b,c,d)=\tfrac12\big(\, 2(k-2-x)+1, \,2(x-y)+1, \,-2(y-z)-1, \,-2z-1\big),
\end{equation}
with $0\leq z\leq y\leq x\leq k-2$.
Since $a+b+c+d= 2(k-2)-4y$, we have that $8r\gammab =2(k-2-2y)$ if $\gamma_\mu=\gammab$, thus $k$ is even and $y=k/2-1-2r\gammab$.
This fact explains that this case has one parameter less than the previous one.
We will assume that $0\leq y\leq k-2$, otherwise there are no elements $\mu$ in the orthant ${+}{+}{-}{-}$ such that $\norma{\mu}=k$ and $\gamma_\mu=\gammab$.

By \eqref{eq5:L_qsh_q=even2}, we obtain that $\mu\in\mathcal L^{(h)}$ if and only if
$$
\gammab+3+x-5y+5z\equiv 4h\pmod8.
$$
Hence
\begin{align*}\label{eq5:q-even-N++--1}
N_{\mathcal L^{(h)},\gammab}^{{+}{+}{-}{-}}(k)
&= \sum_{x=0}^{k-2} \sum_{z=0}^{y} \frac18 \sum_{l=0}^{7} \xi^{l(\gammab+3+x-5y+5z-4h)}
= \frac18 \sum_{l=0}^{7} (-1)^{hl} \xi^{l(\gammab+3-5y)} \sum_{x=y}^{k-2} \xi^{lx} \sum_{z=0}^{y} \xi^{5lz}.
\end{align*}
Similarly to the previous case, we want to show that $N_{\mathcal L^{(h)},\gammab}^{{+}{+}{-}{-}}(k)$ does not depend on $h$, thus it suffices to prove that the sum over $l=1,3,5,7$ vanishes.
Namely
\begin{align*}
B
:=&\, \frac18 \sum_{l=0\atop l\text{ odd}}^{7} (-1)^{hl} \xi^{l(\gammab+3-5y)} \sum_{x=y}^{2y+4r\gammab} \xi^{lx} \sum_{z=0}^{y} \xi^{5lz} \\
 =&\, \frac{(-1)^{h}}8 \sum_{l=0\atop l\text{ odd}}^{7}  \frac{\xi^{l(\gammab+3)}}{\xi^{2l}-1}\,\big(\xi^{6l(y+1)}+1-\xi^{l(y+1)}-\xi^{5l(y+1)}\big).
\end{align*}
Now, if $y$ is odd then $B=0$ since the terms for $l$ and $l+4$ have opposite signs.
One can check that $B=0$ for any even $y$ by considering the cases $y\equiv1,3,5,7\pmod 8$ separately.
This finishes the proof for the case ${+}{+}{-}{-}$.
The remaining cases are very similar.
\end{proof}

We next present an infinite family of pairs of non-isometric lens spaces that are Dirac isospectral.
In this case, the order $q$ of the fundamental group is odd and thus the lens spaces admit exactly one spin structure.
The next proof was inspired by that one in \cite[Thm.~6.3]{LMR}.
It gives formulas for $N_{\mathcal L}(\epsilon,k)$ and $N_{\mathcal L'}(\epsilon,k)$ showing that they coincide.

\begin{theorem}\label{thm5:q-odd}
For any odd positive integer $r\geq7$, we consider the lens spaces
\begin{eqnarray*}
  L &=& L(r^2; 1,1+r, 1+2r, 1+4r), \\
  L'&=& L(r^2; 1,1-r, 1-2r, 1-4r),
\end{eqnarray*}
endowed with their unique spin structure ($q:=r^2$ is odd).
Then, $L$ and $L'$ are non-isometric and Dirac isospectral.
\end{theorem}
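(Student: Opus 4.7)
The statement has two parts, non-isometry and Dirac isospectrality, which I treat separately.

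\emph{Non-isometry.} I will apply Proposition~\ref{prop2:lens-isom}. Suppose for contradiction that there exist $\ell\in\Z$ coprime to $r^2$, signs $\epsilon_1,\dots,\epsilon_4\in\{\pm1\}$, and a permutation $\sigma$ of $\{1,2,3,4\}$ with $\ell\epsilon_j s_j\equiv s'_{\sigma(j)}\pmod{r^2}$. Write $s_j=1+c_jr$ and $s'_j=1-c_jr$ with $(c_1,c_2,c_3,c_4)=(0,1,2,4)$. Reducing modulo $r$, the relation becomes $\ell\epsilon_j\equiv 1\pmod r$ for every $j$, which forces the $\epsilon_j$ to share a common value $\epsilon$ and $\ell\equiv\epsilon\pmod r$; set $\ell=\epsilon+kr$. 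Expanding modulo $r^2$ then gives
\[
c_j+c_{\sigma(j)}\equiv -\epsilon k\pmod r\qquad\text{for every $j$},
\]
so that $c_j+c_{\sigma(j)}$ is a constant $C$ modulo $r$. Summing over $j$ yields $4C\equiv 14\pmod r$. For $r\geq 15$ the sums $c_j+c_{\sigma(j)}\in\{0,\dots,8\}$ lie in a window of length strictly less than $r$, so they must agree as integers, forcing the impossible equation $4C=14$. For the small odd values $r\in\{7,9,11,13\}$, a short check handles each case: one solves $4C\equiv 14\pmod r$, observes (using $c_1=0$) that no value of $c_{\sigma(1)}\in\{0,1,2,4\}$ is congruent to $C$ modulo $r$, and concludes.

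\emph{Isospectrality.} By Corollary~\ref{cor4:char-lens}, it suffices to show that the associated affine congruence lattices $\mathcal L:=\mathcal L(r^2;s)$ and $\mathcal L':=\mathcal L(r^2;s')$ are oriented $\norma{\cdot}$-isospectral. The key reformulation is: for $\mu=\tfrac12(a,b,c,d)\in\ZZ^4$, reducing the defining congruence modulo $r$ forces $a+b+c+d=ru$ for some $u\in\Z$, and then
\[
\mu\in\mathcal L\iff u+(b+2c+4d)\equiv 0\pmod r,\qquad
\mu\in\mathcal L'\iff u-(b+2c+4d)\equiv 0\pmod r.
\]
Hence $\mathcal L$ and $\mathcal L'$ differ only by the sign of the ``residue-level'' invariant $v(\mu):=b+2c+4d$ modulo $r$. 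Following the strategy of \cite[Thm.~6.3]{LMR}, the plan is to derive explicit formulas for $N_{\mathcal L}(\epsilon,k)$ and $N_{\mathcal L'}(\epsilon,k)$ by decomposing $\ZZ^4$ into its sixteen orthants (as in the proof of Theorem~\ref{thm5:q-even}), parameterizing each orthant by three nonnegative integer variables subject to a one-norm constraint, and inserting the roots-of-unity indicator $r^{-2}\sum_{l=0}^{r^2-1}e^{2\pi i l\sum a_j s_j/r^2}$ for the congruence. After factoring the character sums into products of single-variable sums and using the symmetry $l\mapsto -l$ together with the substitution $r\mapsto -r$ relating $s_j$ and $s'_j$, the contributions for $\mathcal L$ and $\mathcal L'$ will match orthant-by-orthant, giving the equality $N_{\mathcal L}(\epsilon,k)=N_{\mathcal L'}(\epsilon,k)$ for all $\epsilon\in\{0,1\}$ and $k\geq 0$.

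\emph{Main obstacle.} The non-isometry step is a routine case analysis. The genuine work lies in the isospectrality: precisely because the lens spaces are non-isometric, there is no $\norma{\cdot}$-isometry of $\ZZ^4$ carrying $\mathcal L$ to $\mathcal L'$ (by Corollary~\ref{cor3:relation-isometries}), so the equality of counts cannot come from a global bijection respecting both $\norma{\cdot}$ and $R(\cdot)\pmod 2$. Rather, the equality must emerge from nontrivial cancellations among the character sums orthant-by-orthant, with the parity of $R(\mu)$ tracked explicitly throughout—the same sort of bookkeeping that made the proof of Theorem~\ref{thm5:q-even} delicate. I expect the final step to reduce, after reorganization, to a finite list of elementary identities among $r$-th roots of unity.
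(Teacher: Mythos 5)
Your non-isometry argument is essentially sound and is more detailed than the paper's (which simply invokes Proposition~\ref{prop2:lens-isom}); only note that your ``short check'' as described fails literally for $r=7$: there $4C\equiv 14\equiv 0\pmod 7$ gives $C\equiv 0$, and $c_{\sigma(1)}=0$ \emph{is} admissible, so you must go one step further (e.g.\ $j=2$ forces $c_{\sigma(2)}\equiv 6\pmod 7$, impossible) to reach the contradiction. That is a cosmetic fix. The genuine gap is in the isospectrality half, which is precisely the content of the theorem: after the correct reduction ($a+b+c+d=ru$ and $\mu\in\mathcal L\iff u+(b+2c+4d)\equiv0\pmod r$, $\mu\in\mathcal L'\iff u-(b+2c+4d)\equiv0\pmod r$), you only state a \emph{plan} that the orthant-by-orthant character sums ``will match'' via the symmetry $l\mapsto-l$ and $r\mapsto-r$. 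As stated this mechanism does not go through termwise: since $s_j+s_j'=2$ (not $0$), replacing $l$ by $-l$ in the indicator $r^{-2}\sum_l e^{2\pi i l\sum a_js_j'/r^2}$ produces an extra factor $e^{-4\pi i l(\mu_1+\dots+\mu_4)/r^2}$ that does not vanish, so $N_{\mathcal L}$ and $N_{\mathcal L'}$ are not related by any simple symmetry of the exponential sums; the equality has to be extracted from actual evaluations, and that work is missing.

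For comparison, the paper's proof shows how nontrivial this step is. In the all-positive orthant one finds $k=\omega r$, and the count for $\mathcal L'$ is the count for $\mathcal L$ with $\omega$ replaced by $-\omega$; the generic terms of the order-$r$ character sum do cancel, but when $3\mid r$ the terms $l\in\{r/3,2r/3\}$ survive and both counts must be evaluated in closed form (with a case analysis on $\omega\bmod 3$) before one sees they agree. In the twelve mixed-sign orthants the paper abandons character sums altogether: it parameterizes solutions explicitly, obtains counting formulas built from floor functions (Lemmas~\ref{lem5:q-oddRodd} and \ref{lem5:q-oddReven}), and proves the difference vanishes using correspondences of the type $x'_{y_0,\omega-\gamma}=x_{y_0,\gamma}$, the fact that $y_0\mapsto x_{y_0,\gamma}$ and $y_0\mapsto x_{y_0,\gamma}-y_0$ are bijections modulo $r$, and a three-way case analysis on $k\bmod r$ (odd, even $\neq r-1$, equal to $r-1$). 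None of these cancellations is automatic, and your proposal neither carries them out nor gives a reason why your factored single-variable sums would vanish or match; ``I expect the final step to reduce to a finite list of elementary identities'' is exactly the part that constitutes the proof. So the proposal identifies the right framework (Corollary~\ref{cor4:char-lens}, orthant decomposition, the $r\mapsto -r$ relation between $s$ and $s'$) but leaves the theorem's core unproved.
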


\begin{proof}
One can check by using Proposition~\ref{prop2:lens-isom} that $L$ and $L'$ are not isometric.
We will prove Dirac isospectrality by showing that the congruence sets $\mathcal L:=\mathcal L(r^2;1,1+r, 1+2r, 1+4r)$ and $\mathcal L':=\mathcal L(r^2;1,1-r, 1-2r, 1-4r)$ are oriented $\norma{\cdot}$-isospectral by Corollary~\ref{cor4:char-lens}.
To do this, we will check for fixed $k$ that there are the same number of elements in $\mathcal L$ and in $\mathcal L'$ with one-norm equal to $k$ in each orthant.
By definition, for $\mu=\tfrac12(a,b,c,d)\in\ZZ^4$ we have that
\begin{eqnarray}\label{eq5:q-odd-LL'1}
\mu\in\mathcal L &\Longleftrightarrow & a+(1+r)b+(1+2r)c+(1+4r)d\equiv 0\pmod{r^2}, \\
\mu\in\mathcal L'&\Longleftrightarrow & a+(1-r)b+(1-2r)c+(1-4r)d\equiv 0\pmod{r^2}, \notag
\end{eqnarray}
or equivalently,
\begin{eqnarray} \label{eq5:q-odd-LL'2}
\mu\in\mathcal L &\Longleftrightarrow & (a+b+c+d)+r(b+2c+4d)\equiv 0\pmod{r^2},\\
\mu\in\mathcal L'&\Longleftrightarrow & (a+b+c+d)-r(b+2c+4d)\equiv 0\pmod{r^2}, \notag
\end{eqnarray}
thus $a+b+c+d\equiv 0\pmod r$ in both cases.

Let us first examine the orthant ${+}{+}{+}{+}$.
Any element of $\ZZ^4$ in this orthant with one-norm equal to $k$ can be written as
\begin{equation}\label{eq5:q-odd-mu++++}
\mu=\tfrac12(a,b,c,d)=\tfrac12\big(\, 2(k-2-x)+1, \,2(x-y)+1, \,2(y-z)+1, \,2z+1\big),
\end{equation}
with $0\leq z\leq y\leq x\leq k-2$.
From \eqref{eq5:q-odd-LL'2}, if $\mu$ is in $\mathcal L$ or in $\mathcal L'$, then $a+b+c+d=2k\equiv 0\pmod r$, thus $k$ is divisible by $r$ since $r$ is odd.
From now on we assume $k=\omega r$ with $\omega\geq0$.
Now, \eqref{eq5:q-odd-LL'2} implies that $\mu$ as in \eqref{eq5:q-odd-mu++++} is in $\mathcal L$ (resp.\ $\mathcal L'$) if and only if
\begin{align}
2x+2y+4z+7+2\omega\equiv 0\pmod r &\label{eq5:q-odd-L++++}\\
(\text{resp.\quad}
2x+2y+4z+7-2\omega \equiv 0\pmod r & ). \notag
\end{align}

Set $\xi=e^{2\pi i/r}$.
The number $N_{\mathcal L}^{{+}{+}{+}{+}}(k)$ of elements $\mu\in\mathcal L$ contained in ${+}{+}{+}{+}$ satisfying $\norma{\mu}=k$ is equal to
\begin{align}\label{eq5:q-odd-N++++1}
N_{\mathcal L}^{{+}{+}{+}{+}}(k)
&= \sum_{x=0}^{k-2} \sum_{y=0}^{x} \sum_{z=0}^{y} \frac{1}{r} \sum_{l=0}^{r-1} \xi^{l(7+2\omega+2x+2y+4z)} \\
&= \frac{1}{r}\sum_{x=0}^{k-2} \sum_{y=0}^{x} \sum_{z=0}^{y}  1
    + \frac{1}{r} \sum_{l=1}^{r-1} \xi^{l(7+2\omega)}\sum_{x=0}^{k-2} \xi^{2lx}\sum_{y=0}^{x} \xi^{2ly}\sum_{z=0}^{y} \xi^{4lz}. \notag
\end{align}
Since $\xi^{4l}\neq1$ for any $1\leq l\leq r-1$, we obtain that
\begin{align}\label{eq5:q-odd-N++++2}
N_{\mathcal L}^{{+}{+}{+}{+}}(k)
&= \frac{1}{r}\binom{k+1}{3}
    + \frac{1}{r} \sum_{l=1}^{r-1} \xi^{l(7+2\omega)}\sum_{x=0}^{k-2} \xi^{2lx}\sum_{y=0}^{x} \xi^{2ly} \,\frac{\xi^{4l(y+1)}-1}{\xi^{4l}-1}\\
&= \frac{1}{r}\binom{k+1}{3}
    + \frac{1}{r} \sum_{l=1}^{r-1} \frac{\xi^{l(7+2\omega)}}{\xi^{4l}-1}
        \sum_{x=0}^{k-2} \xi^{2lx}\sum_{y=0}^{x} \big(\xi^{4l} \xi^{6ly}-\xi^{2ly}\big). \notag
\end{align}
Since $k=r\omega$ is divisible by $r$, it follows that
$$
\sum_{x=0}^{k-2} \xi^{2lx}\sum_{y=0}^{x} \xi^{2ly}
= \sum_{x=0}^{k-2} \xi^{2lx} \frac{\xi^{2l(x+1)}-1}{\xi^{2l}-1}
= \frac{-\xi^{2l}\xi^{-4l}+\xi^{-2l}}{\xi^{2l}-1}=0
$$
for every $1\leq l\leq r-1$.
Therefore
\begin{align}\label{eq5:q-odd-N++++3}
N_{\mathcal L}^{{+}{+}{+}{+}}(k)
&= \frac{1}{r}\binom{k+1}{3}
    + \frac{1}{r} \sum_{l=1}^{r-1} \frac{\xi^{l(7+2\omega)}\xi^{4l}}{\xi^{4l}-1}
        \sum_{x=0}^{k-2} \xi^{2lx}\sum_{y=0}^{x}  \xi^{6ly}.
\end{align}
From \eqref{eq5:q-odd-L++++}, the same formula holds for $N_{\mathcal L'}^{{+}{+}{+}{+}}(k)$ by replacing $\omega$ by $-\omega$.

Clearly
$$
\sum_{y=0}^{x}  \xi^{6ly}
= \begin{cases}
  \dfrac{\xi^{6l(x+1)}-1}{\xi^{6l}-1} &\quad\text{if } 3l\not\equiv0\pmod r,\\
  x+1 &\quad\text{if } 3l\equiv0\pmod r.
\end{cases}
$$
Then, one can check that $\sum_{x=0}^{k-2} \xi^{2lx}\sum_{y=0}^{x}  \xi^{6ly}= 0$ for any $l$ with $3l\not\equiv0\pmod r$.
Hence
\begin{equation}\label{eq5:q-odd-N++++4}
  N_{\mathcal L}^{{+}{+}{+}{+}}(k)=N_{\mathcal L'}^{{+}{+}{+}{+}}(k) = \frac{1}{r}\binom{k+1}{3}= \frac{1}{r}\binom{r\omega+1}{3}
  \qquad\text{if }r\not\equiv0\pmod3.
\end{equation}

We now assume $r\equiv0\pmod 3$.
Set $\nu =\xi^{r/3}$, which is a primitive root of unity of order $3$.
From the second to last equation and \eqref{eq5:q-odd-N++++3} we see that
\begin{align}\label{eq5:q-odd-N++++5}
N_{\mathcal L}^{{+}{+}{+}{+}}(k)
&= \frac{1}{r}\binom{k+1}{3}
    + \frac{1}{r} \sum_{l\in\{\frac{r}{3},\frac{2r}{3}\}} \frac{\xi^{l(7+2\omega)}}{1-\xi^{-4l}}
        \sum_{x=0}^{k-2} \xi^{2lx} (x+1)\\
&= \frac{1}{r}\binom{k+1}{3}
    + \frac{1}{r} \sum_{j=1}^{2} \frac{\nu^{j(1-\omega)}}{1-\nu^{-j}}
        \left(\sum_{\alpha=0}^{\omega \frac r3-1}\sum_{x_0=0}^{2} \nu^{-jx_0} (3\alpha+1+x_0) - k\nu^{2j(k-1)}\right)\notag\\
&= \frac{1}{r}\binom{k+1}{3}
    + \frac{1}{r} \sum_{j=1}^{2} \frac{\nu^{-j(1+\omega)}}{\nu^{j}-1}
        \left(\frac{\omega r}3(\nu^{-j}+2\nu^{j}) - \omega r\nu^{j}\right)\notag \\
&= \frac{1}{r}\binom{k+1}{3}
    +\frac{\omega}3 \big(\nu^{-(1+\omega)}+\nu^{1+\omega}\big)
    - \omega \,\frac{\nu^{-\omega}-\nu^{1+\omega}}{\nu-1} \notag
\end{align}
Similarly,
\begin{align}\label{eq5:q-odd-N++++5'}
N_{\mathcal L'}^{{+}{+}{+}{+}}(k)
&= \frac{1}{r}\binom{k+1}{3}
    +\frac{\omega}3 \big(\nu^{-(1-\omega)}+\nu^{1-\omega}\big)
    - \omega \,\frac{\nu^{\omega}-\nu^{1-\omega}}{\nu-1}.
\end{align}
By a straightforward computation we obtain that
\begin{equation}\label{eq5:q-odd-N++++6}
N_{\mathcal L}^{{+}{+}{+}{+}}(r\omega)=N_{\mathcal L'}^{{+}{+}{+}{+}}(r\omega)=
\begin{cases}
\displaystyle  \frac{1}{r}\binom{r\omega+1}{3}+\frac{2\omega}{3} \quad&\text{if }\omega\equiv0\pmod3,\\[4mm]
\displaystyle  \frac{1}{r}\binom{r\omega+1}{3}-\frac{\omega}{3}\quad&\text{if }\omega\not\equiv0\pmod3.
\end{cases}
\end{equation}

We next proceed with the orthants with an odd number of negative signs, namely ${+}{+}{+}{-}$, ${+}{+}{-}{+}$, ${+}{-}{+}{+}$ and ${-}{+}{+}{+}$.
Note that $\mu\in\mathcal L$ if and only if $-\mu\in\mathcal L$, thus it is not necessary to check the opposite orthants.
We will often use the notation $\lfloor x\rfloor=\max\{a\in\Z: a\leq x\}$ and $\lceil x\rceil=\min\{a\in\Z: a\geq x\}$ for $x\in\R$.

\begin{lemma}\label{lem5:q-oddRodd}
Let $r$ be an odd positive integer and let $k=\omega r+k_0$ such that $0\leq k_0<r$ and $\omega\geq0$.
Then, the number of element $\mu$ in $\mathcal L$ in the orthant ${+}{+}{+}{-}$, ${+}{+}{-}{+}$, ${+}{-}{+}{+}$ or ${-}{+}{+}{+}$ with $\norma{\mu}=k+2$ is equal to
\begin{equation}\label{eq5:q-oddRoddN}
\sum_{\gamma=0}^{\omega + \lfloor\tfrac{k_0-z_0}{r}\rfloor}
\sum_{y_0=0}^{r-1}
\sum_{\beta=\gamma+\lceil\tfrac{z_0-y_0}{r}\rceil}^{\omega + \lfloor\tfrac{k_0-y_0}{r}\rfloor}
\left(
\omega-\beta +1+ \lfloor\tfrac{k_0-x_{y_0,\gamma}}{r}\rfloor +\lfloor\tfrac{x_{y_0,\gamma}-y_0}{r}\rfloor
\right),
\end{equation}
where $z_0$ is the only integer that satisfies $0\leq z_0<r$ and $2z_0\equiv k+1\pmod r$ and $x_{y_0,\gamma}$ is the only integer such that $0\leq x_{y_0,\gamma}<r$ and
\begin{equation}\label{eq5:q-oddRoddx0}
\begin{cases}
2x_{y_0,\gamma}\equiv 1-2y_0+12z_0-2\big(\omega-2\gamma+\tfrac{k_0+1-2z_0}{r}\big)\pmod{r}
    &\quad\text{for \; ${+}{+}{+}{-}$}, \\
2x_{y_0,\gamma}\equiv -3-6y_0+12z_0-2\big(\omega-2\gamma+\tfrac{k_0+1-2z_0}{r}\big)\pmod{r}
    &\quad\text{for \; ${+}{+}{-}{+}$}, \\
4x_{y_0,\gamma}\equiv -5-4y_0+10z_0-2\big(\omega-2\gamma+\tfrac{k_0+1-2z_0}{r}\big)\pmod{r}
    &\quad\text{for \; ${+}{-}{+}{+}$}, \\
2x_{y_0,\gamma}\equiv -7-4y_0+8z_0-2k_0-2\big(\omega-2\gamma+\tfrac{k_0+1-2z_0}{r}\big)\pmod{r}
    &\quad\text{for \; ${-}{+}{+}{+}$}.
\end{cases}
\end{equation}
Formula \eqref{eq5:q-oddRoddN} also holds for $\mathcal L'$ replacing $x_{y_0,\gamma}$ by $x_{y_0,\gamma}'$, where $x_{y_0,\gamma}'$ is the only integer such that $0\leq x_{y_0,\gamma}'<r$ and
\begin{equation}\label{eq5:q-oddRoddx0'}
\begin{cases}
2x_{y_0,\gamma}\equiv 1-2y_0+12z_0+2\big(\omega-2\gamma+\tfrac{k_0+1-2z_0}{r}\big)\pmod{r}
    &\quad\text{for \; ${+}{+}{+}{-}$}, \\
2x_{y_0,\gamma}\equiv -3-6y_0+12z_0+2\big(\omega-2\gamma+\tfrac{k_0+1-2z_0}{r}\big)\pmod{r}
    &\quad\text{for \; ${+}{+}{-}{+}$}, \\
4x_{y_0,\gamma}\equiv -5-4y_0+10z_0+2\big(\omega-2\gamma+\tfrac{k_0+1-2z_0}{r}\big)\pmod{r}
    &\quad\text{for \; ${+}{-}{+}{+}$}, \\
2x_{y_0,\gamma}\equiv -7-4y_0+8z_0-2k_0+2\big(\omega-2\gamma+\tfrac{k_0+1-2z_0}{r}\big)\pmod{r}
    &\quad\text{for \; ${-}{+}{+}{+}$}.
\end{cases}
\end{equation}
\end{lemma}

\begin{proof}
Any element in $\ZZ^4$ in the orthant ${+}{+}{+}{-}$ with one-norm equal to $k+2$ can be written as
$$
\mu=\tfrac12(a,b,c,d)=\tfrac12\big(2(k-x)+1, 2(x-y)+1, 2(y-z)+1, -2z-1\big),
$$
with $0\leq z\leq y\leq x\leq k$ integer numbers.
From \eqref{eq5:q-odd-LL'2}, if $\mu$ is in $\mathcal L$ or in $\mathcal L'$, then $a+b+c+d=2k+2-4z\equiv 0\pmod r$, thus $2z\equiv k_0+1\pmod r$ since $r$ is odd.
We write $z=\gamma r+z_0$ with $0\leq z_0<r$.
The condition $0\leq z\leq k$ is equivalent to
\begin{equation}\label{eq5:intervalo-alpha}
\begin{array}{rcccl}
0&\leq& \gamma r+z_0 &\leq &\omega r+k_0, \\[1mm]
-\tfrac{z_0}{r}&\leq& \gamma &\leq& \omega+ \tfrac{k_0-z_0}{r}\\[1mm]
0=\lceil-\tfrac{z_0}{r}\rceil &\leq& \gamma &\leq& \omega+ \lfloor\tfrac{k_0-z_0}{r}\rfloor.
\end{array}
\end{equation}
Note that in \eqref{eq5:q-oddRoddN}, $\gamma$ runs in the same interval.

From \eqref{eq5:q-odd-LL'2}, we have that $\mu\in\mathcal L$ if and only if
\begin{align}
b+2c+4d &\equiv -\tfrac{2k+2-4z}{r} \pmod r \notag\\
2x+2y-12z_0-1 &\equiv -2\big(\omega-2\gamma+\tfrac{k_0+1-2z_0}{r} \big)\pmod r. \label{eq5:q-odd-L+++-}
\end{align}
Similarly, $\mu\in\mathcal L'$ if and only if
\begin{align}
2x+2y-12z_0-1 &\equiv 2\big(\omega-2\gamma+\tfrac{k_0+1-2z_0}{r} \big)\pmod r. \label{eq5:q-odd-L'+++-}
\end{align}
Let $y=\beta r+y_0$ with $0\leq y_0<r$ such that $z\leq y\leq k$, that is,
$\gamma+\lceil\tfrac{z_0-y_0}{r}\rceil \leq \beta \leq \omega + \lfloor\tfrac{k_0-y_0}{r}\rfloor$, which is the same interval as in \eqref{eq5:q-oddRoddN}.
From \eqref{eq5:q-odd-L+++-} and \eqref{eq5:q-odd-L'+++-}, we have that $y_0$ determines $x$ modulo $r$ if $\mu$ is in $\mathcal L$ or in $\mathcal L'$.

Suppose $\gamma$ and $y_0$ as above are fixed.
Let $x_{y_0,\gamma}$ (resp.\ $x_{y_0,\gamma}'$) be the only integer such that $0\leq x_{y_0,\gamma}<r$ (resp.\ $0\leq x_{y_0,\gamma}'<r$) and satisfies \eqref{eq5:q-odd-L+++-} (resp.\ \eqref{eq5:q-odd-L'+++-}).
Hence, any other solution of \eqref{eq5:q-odd-L+++-} (resp.\ \eqref{eq5:q-odd-L'+++-}) is $x=\alpha r+x_{y_0,\gamma}$ (resp.\ $x'=\alpha' r+x_{y_0,\gamma}'$).
One can check that the condition $y\leq x\leq k$ (resp.\ $y\leq x'\leq k$) is equivalent to
\begin{align*}
\beta + \lceil\tfrac{y_0-x_{y_0,\gamma}}{r}\rceil &\leq \alpha \leq \omega + \lfloor\tfrac{k_0-x_{y_0,\gamma}}{r}\rfloor &
\Big(\text{resp.\quad }
\beta + \lceil\tfrac{y_0-x_{y_0,\gamma}'}{r}\rceil &\leq \alpha' \leq \omega + \lfloor\tfrac{k_0-x_{y_0,\gamma}'}{r}\rfloor \Big),
\end{align*}
thus we have $\omega +1-\beta+ \lfloor\tfrac{k_0-x_{y_0,\gamma}}{r}\rfloor- \lceil\tfrac{y_0-x_{y_0,\gamma}}{r}\rceil$ (resp.\ $\omega +1-\beta+ \lfloor\tfrac{k_0-x_{y_0,\gamma}'}{r}\rfloor- \lceil\tfrac{y_0-x_{y_0,\gamma}'}{r}\rceil$) choices.
By adding over $\gamma$, $y_0$, $\beta$ the number of choices for $\alpha$ and $\alpha'$ respectively, we obtain the asserted formulas.

The other cases follow in the same manner.
One has to write the elements in the corresponding orthants as follows:
$$
\begin{array}{@{\tfrac12 \big(\;}c@{\;\;,\;\;}c@{\;\;,\;\;}c@{\;\;,\;\;}c@{\;\big)\qquad}l}
2(k-x)+1&   2(x-y)+1&   -2z-1&      2(y-z)+1&   \text{for ${+}{+}{-}{+}$},\\[1mm]
2(k-x)+1&   -2z-1&      2(x-y)+1&   2(y-z)+1&   \text{for ${+}{-}{+}{+}$},\\[1mm]
-2z-1&      2(k-x)+1&   2(x-y)+1&   2(y-z)+1&   \text{for ${-}{+}{+}{+}$},
\end{array}
$$
for $0\leq z\leq y\leq x\leq k$.
\end{proof}

We next prove that the formulas for $\mathcal L$ and $\mathcal L'$ in the previous lemma coincide in each orthant.
We restrict our attention to the case ${+}{+}{+}{-}$ since the other cases are very similar to this one.
From Lemma~\ref{lem5:q-oddRodd}, the difference of \eqref{eq5:q-oddRoddN} for $\mathcal L$ and $\mathcal L'$ is equal to
\begin{align}\label{eq5:q-oddD(k)+++-1}
D(k)&= \sum_{\gamma=0}^{\omega + \lfloor\tfrac{k_0-z_0}{r}\rfloor}
\sum_{y_0=0}^{r-1}
\sum_{\beta=\gamma+\lceil\tfrac{z_0-y_0}{r}\rceil}^{\omega + \lfloor\tfrac{k_0-y_0}{r}\rfloor}
\left(
\lfloor\tfrac{k_0-x_{y_0,\gamma}}{r}\rfloor +\lfloor\tfrac{x_{y_0,\gamma}-y_0}{r}\rfloor
-
\lfloor\tfrac{k_0-x_{y_0,\gamma}'}{r}\rfloor -\lfloor\tfrac{x_{y_0,\gamma}'-y_0}{r}\rfloor
\right) \\
    &= \sum_{\gamma=0}^{\omega + \lfloor\tfrac{k_0-z_0}{r}\rfloor}
\sum_{y_0=0}^{r-1}
\left(\omega -\gamma+1+ \lfloor\tfrac{k_0-y_0}{r}\rfloor-\lceil\tfrac{z_0-y_0}{r}\rceil \right)
\left(
\lfloor\tfrac{k_0-x_{y_0,\gamma}}{r}\rfloor +\lfloor\tfrac{x_{y_0,\gamma}-y_0}{r}\rfloor
\right)\notag\\
    &\quad
    - \sum_{\gamma=0}^{\omega + \lfloor\tfrac{k_0-z_0}{r}\rfloor}
\sum_{y_0=0}^{r-1}
\left(\omega -\gamma+1+ \lfloor\tfrac{k_0-y_0}{r}\rfloor-\lceil\tfrac{z_0-y_0}{r}\rceil \right)
\left(
\lfloor\tfrac{k_0-x_{y_0,\gamma}'}{r}\rfloor +\lfloor\tfrac{x_{y_0,\gamma}'-y_0}{r}\rfloor
\right).\notag
\end{align}
We now divide the problem into three cases, namely, $k_0$ odd, $k_0\neq r-1$ even, and $k_0=r-1$.

We first assume that $k_0$ is odd, thus $z_0=\tfrac{k_0+1}{2}$.
From \eqref{eq5:q-oddRoddx0} and \eqref{eq5:q-oddRoddx0'} we have that $x_{y_0,\gamma}$ and $x_{y_0,\gamma}'$ are the smallest non-negative integer numbers such that
\begin{align}\label{eq5:q-oddRoddx0+++-}
  2x_{y_0,\gamma} &\equiv 1-2y_0+12z_0-2\big(\omega-2\gamma\big)\pmod{r},\\
  2x_{y_0,\gamma}'&\equiv 1-2y_0+12z_0+2\big(\omega-2\gamma\big)\pmod{r}. \notag
\end{align}
This implies that $x_{y_0,\omega-\gamma}'=x_{y_0,\gamma}$ for every $0\leq \gamma\leq \omega$ and every $y_0$.
By \eqref{eq5:q-oddD(k)+++-1}, we have that
\begin{align*}
D(k)
&=
   \sum_{\gamma=0}^{\omega}
\sum_{y_0=0}^{r-1}
\left(\omega -\gamma+1+ \lfloor\tfrac{k_0-y_0}{r}\rfloor-\lceil\tfrac{z_0-y_0}{r}\rceil \right)
\left(
\lfloor\tfrac{k_0-x_{y_0,\gamma}}{r}\rfloor +\lfloor\tfrac{x_{y_0,\gamma}-y_0}{r}\rfloor
\right)\\
&\quad
    - \sum_{\gamma=0}^{\omega}
\sum_{y_0=0}^{r-1}
\left(\gamma+1+ \lfloor\tfrac{k_0-y_0}{r}\rfloor-\lceil\tfrac{z_0-y_0}{r}\rceil \right)
\left(
\lfloor\tfrac{k_0-x_{y_0,\gamma}}{r}\rfloor +\lfloor\tfrac{x_{y_0,\gamma}-y_0}{r}\rfloor
\right)\\
&=
\sum_{\gamma=0}^{\omega}
\left(\omega -2\gamma\right)
\sum_{y_0=0}^{r-1}
\left(
\lfloor\tfrac{k_0-x_{y_0,\gamma}}{r}\rfloor +\lfloor\tfrac{x_{y_0,\gamma}-y_0}{r}\rfloor
\right).
\end{align*}
Now, since $y_0\mapsto x_{y_0,\gamma}$ is a bijection of $\{x\in\Z: 0\leq x<r\}$ by \eqref{eq5:q-oddRoddx0+++-}, we have that $\sum_{y_0=0}^{r-1} \lfloor\tfrac{k_0-x_{y_0,\gamma}}{r}\rfloor=\sum_{y_0=0}^{r-1} \lfloor\tfrac{k_0-y_0}{r}\rfloor  = k_0+1-r $.
Moreover, $y_0\mapsto x_{y_0,\gamma}-y_0$ is also a bijection of $\{x\in\Z: 0\leq x<r\}$ by \eqref{eq5:q-oddRoddx0+++-}, so $\sum_{y_0=0}^{r-1} \lfloor\tfrac{x_{y_0,\gamma}-y_0}{r}\rfloor=\tfrac{1-r}{2}$.
Hence
$$
D(k)= \sum_{\gamma=0}^{\omega}
\left(\omega -2\gamma\right)
(k_0+1-r+\tfrac{1-r}{2}) =0.
$$

We next assume $k_0$ is even and different from $r-1$, thus $z_0=\tfrac{k_0+1+r}{2}\geq1$.
Similar to the previous case, by \eqref{eq5:q-oddRoddx0} and \eqref{eq5:q-oddRoddx0'} we have that $x_{y_0,\omega-1-\gamma}'=x_{y_0,\gamma}$ for every $0\leq \gamma\leq \omega-1$ and every $y_0$, and by \eqref{eq5:q-oddD(k)+++-1} one obtains that
\begin{align*}
  D(k)
&=
\sum_{\gamma=0}^{\omega-1}
\left(\omega-1 -2\gamma\right)
\sum_{y_0=0}^{r-1}
\left(
\lfloor\tfrac{k_0-x_{y_0,\gamma}}{r}\rfloor +\lfloor\tfrac{x_{y_0,\gamma}-y_0}{r}\rfloor
\right).
\end{align*}
Analysis similar to the case $k_0$ odd shows that the second sum does not depend on $\gamma$, thus $D(k)=0$.

Finally, we assume $k_0=r-1$, thus $z_0=0$.
In this case we obtain $x_{y_0,\omega+1-\gamma}'=x_{y_0,\gamma}$ for all $0\leq \gamma\leq \omega+1$, but the sum over $\gamma$ in \eqref{eq5:q-oddD(k)+++-1} runs over $[\![0,\omega ]\!]$.
By a similar argument as the one used in the two previous cases, all the terms in \eqref{eq5:q-oddD(k)+++-1} cancel in the sum over $\gamma$ except for $\gamma=0$.
Hence
\begin{align*}
D(k)
&=
\sum_{y_0=0}^{r-1}
\left(\omega +1+ \lfloor\tfrac{k_0-y_0}{r}\rfloor-\lceil\tfrac{z_0-y_0}{r}\rceil \right)
\left(
\lfloor\tfrac{k_0-x_{y_0,0}}{r}\rfloor +\lfloor\tfrac{x_{y_0,0}-y_0}{r}\rfloor
-\lfloor\tfrac{k_0-x_{y_0,0}'}{r}\rfloor -\lfloor\tfrac{x_{y_0,0}'-y_0}{r}\rfloor
\right)\notag\\
&=
\left(\omega +1\right)
\sum_{y_0=0}^{r-1}
\left(
\lfloor\tfrac{x_{y_0,0}-y_0}{r}\rfloor - \lfloor\tfrac{x_{y_0,0}'-y_0}{r}\rfloor
\right)\notag.
\end{align*}
In the last step we computed many floors by using the fact that $k_0=r-1$ and $z_0=0$.
We already know that $\sum_{y_0=0}^{r-1} \lfloor\tfrac{x_{y_0,0}-y_0}{r}\rfloor =\sum_{y_0=0}^{r-1} \lfloor\tfrac{x_{y_0,0}'-y_0}{r}\rfloor =\tfrac{1-r}{2}$, thus $D(k)=0$.
This completes the proof in the orthant ${+}{+}{+}{-}$.
Very similar arguments work in the orthants ${+}{+}{-}{+}$, ${+}{-}{+}{+}$ and ${-}{+}{+}{+}$.

We now consider the orthants with an even number of negative entries.
We leave the case ${+}{+}{+}{+}$ to the end.

\begin{lemma}\label{lem5:q-oddReven}
Let $r$ be an odd positive integer and let $k=\omega r+k_0$ such that $0\leq k_0<r$ and $\omega\geq0$.
Then, the number of element $\mu$ in $\mathcal L$ in the orthant ${+}{+}{-}{-}$, ${+}{-}{+}{-}$ or ${+}{-}{-}{+}$ with $\norma{\mu}=k+2$ is equal to
\begin{equation}\label{eq5:q-oddRevenN}
\sum_{\beta=0}^{\omega + \lfloor\tfrac{k_0-y_0}{r}\rfloor}
\sum_{z_0=0}^{r-1}
\sum_{\gamma=0}^{\beta + \lfloor\tfrac{y_0-z_0}{r}\rfloor}
\left(
\omega-\beta +1+ \lfloor\tfrac{k_0-x_{z_0,\beta}}{r}\rfloor +\lfloor\tfrac{x_{z_0,\beta}-y_0}{r}\rfloor
\right),
\end{equation}
where $y_0$ is the only integer that satisfies $0\leq y_0<r$ and $2y_0\equiv k\pmod r$ and $x_{z_0,\beta}$ is the only integer such that $0\leq x_{z_0,\beta}<r$ and
\begin{equation}\label{eq5:q-oddRevenx0}
\begin{cases}
2x_{z_0,\beta}\equiv 5+6y_0+4z_0 -2\big(\omega-2\beta+\tfrac{k_0+1-2y_0}{r}\big)\pmod{r}
    &\quad\text{for \; ${+}{+}{-}{-}$}, \\
4x_{z_0,\beta}\equiv 3+6y_0+6z_0 -2\big(\omega-2\beta+\tfrac{k_0+1-2y_0}{r}\big)\pmod{r}
    &\quad\text{for \; ${+}{-}{+}{-}$}, \\
8x_{z_0,\beta}\equiv -1+10y_0+2z_0 -2\big(\omega-2\beta+\tfrac{k_0+1-2y_0}{r}\big)\pmod{r}
    &\quad\text{for \; ${+}{-}{-}{+}$}.
\end{cases}
\end{equation}
Formula \eqref{eq5:q-oddRevenN} also holds for $\mathcal L'$ when replacing $x_{z_0,\beta}$ by $x_{z_0,\beta}'$, where $x_{z_0,\beta}'$ is the only integer such that $0\leq x_{z_0,\beta}'<r$ and
\begin{equation}\label{eq5:q-oddRevenx0'}
\begin{cases}
2x_{z_0,\beta}\equiv 5+6y_0+4z_0 +2\big(\omega-2\beta+\tfrac{k_0+1-2y_0}{r}\big)\pmod{r}
    &\quad\text{for \; ${+}{+}{-}{-}$}, \\
4x_{z_0,\beta}\equiv 3+6y_0+6z_0 +2\big(\omega-2\beta+\tfrac{k_0+1-2y_0}{r}\big)\pmod{r}
    &\quad\text{for \; ${+}{-}{+}{-}$}, \\
8x_{z_0,\beta}\equiv -1+10y_0+2z_0 +2\big(\omega-2\beta+\tfrac{k_0+1-2y_0}{r}\big)\pmod{r}
    &\quad\text{for \; ${+}{-}{-}{+}$}.
\end{cases}
\end{equation}
\end{lemma}

\begin{proof}
The proof of this lemma is very similar to that one of Lemma~\ref{lem5:q-oddRodd}.
One has to write the elements with one-norm equal to $k+2$ in the orthants follows:
$$
\begin{array}{@{\tfrac12 \big(\;}c@{\;\;,\;\;}c@{\;\;,\;\;}c@{\;\;,\;\;}c@{\;\big)\qquad}l}
2(k-x)+1&   2(x-y)+1&   -2(y-z)-1&  -2z-1&      \text{for ${+}{+}{-}{-}$},\\[1mm]
2(k-x)+1&   -2(y-z)-1&  2(x-y)+1&   -2z-1&      \text{for ${+}{-}{+}{-}$},\\[1mm]
2(k-x)+1&   -2(y-z)-1&  -2z-1&      2(x-y)+1&   \text{for ${+}{-}{-}{+}$}
\end{array}
$$
for $0\leq z\leq y\leq x\leq k$.
\end{proof}

Moreover, the equality between the number of elements in $\mathcal L$ and in $\mathcal L'$ with one-norm $k+2$ in the orthants ${+}{+}{-}{-}$, ${+}{-}{+}{-}$ and ${+}{-}{-}{+}$ follows as before.
This concludes the proof.
\end{proof}

\begin{remark}
  The proof of Theorem~\ref{thm5:q-odd} is quite involved since it gives formulas for $N_{\mathcal L}(r,k)$ for every $k$ and $r$.
  This allows us to compute the Dirac spectrum explicitly by Theorem~\ref{thm4:dimV-lens}.
  The skeptical reader can, by using a computer, check  that the formulas are right for low values of $r$ and $k$.
\end{remark}

\begin{remark}
The computations included in the next section hint at Theorem~\ref{thm5:q-odd} being generalizable to the pairs $L=L(r^2t; 1,1+rt, 1+2rt, 1+4rt)$ and $L'=L(r^2t; 1,1-rt, 1-2rt, 1-4rt)$ where $r$ is again an odd positive integer and $t$ is an arbitrary positive integer.
When $t$ is even, the order of the fundamental group $q=r^2t$ is even, thus $L$ and $L'$ have two different spin structures, namely, $\tau_0$, $\tau_1$ and $\tau_0'$, $\tau_1'$ respectively.
Then, $L$ with $\tau_0$ and $L'$ with $\tau_0'$ are Dirac isospectral and so are $L$ with $\tau_1$ and $L'$ with $\tau_1'$.
\end{remark}

\section{Computational examples}\label{sec:comp}
Besides the infinite families in Theorems~\ref{thm5:increasing}, \ref{thm5:q-even} and \ref{thm5:q-odd}, there are many more examples of Dirac isospectrality in the class of lens spaces.
To make a characterization of Dirac isospectrality presented herein accessible by computer computations, we first present a finiteness result.

Let $q$ be a positive integer.
We let
$$
C(q):=\{\tfrac12(a_1,\dots,a_m)\in\ZZ^m: |\tfrac{a_j}{2}|<q \quad\forall \, j\}.
$$
We call \emph{$q$-reduced} to the elements in $C(q)$ and for any subset $\mathcal L$ of $\ZZ^m$ we let
\begin{equation*}
  N_{\mathcal L}^{\mathrm{red}} (\epsilon,k) = \#\{\mu\in\mathcal L\cap C(q): \norma{\mu}=k+\tfrac m2,\; R(\mu)\equiv\epsilon\pmod2\}.
\end{equation*}
Note that $N_{\mathcal L}^{\mathrm{red}} (\epsilon,k) =0$ for every $k\geq mq-\tfrac m2$, so there are only finitely many non-zero of these numbers.

\begin{proposition}\label{prop6:finiteness}
Let $\mathcal L$ be an affine congruence lattice of a spin lens space of dimension $2m-1$ with fundamental group of order $q$.
Then
\begin{equation}\label{eq6:N^red_formula}
N_{\mathcal L} (\epsilon,k)
= \sum_{\beta=0}^{\lfloor{k}/{q}\rfloor} \tbinom{\beta+m-1}{m-1}\; N_{\mathcal L}^{\mathrm{red}}(\epsilon,k-\beta q).
\end{equation}
\end{proposition}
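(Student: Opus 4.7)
The plan is to partition $\mathcal L$ via a canonical reduction map onto $\mathcal L \cap C(q)$ that is compatible with the grading by $\norma{\cdot}$ and $\menos(\cdot)$. Given $\mu = \tfrac12(a_1,\ldots,a_m) \in \mathcal L$, write $|a_j| = 2q b_j + a_j^0$ with $b_j \in \Z_{\geq 0}$ and $a_j^0 \in \{1, 3, \ldots, 2q-1\}$ (this is possible since each $a_j$ is a nonzero odd integer), and set $\mu^{\mathrm{red}} := \tfrac12(\sgn{a_1}\, a_1^0, \ldots, \sgn{a_m}\, a_m^0)$. By construction $\mu^{\mathrm{red}} \in C(q)$, so the task reduces to showing $\mu^{\mathrm{red}} \in \mathcal L$ and then counting preimages.

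The key observation is that the defining congruences \eqref{eq3:L(q;s)} and \eqref{eq3:L(q;s;h)} are both invariant under replacing any single $a_j$ by $a_j \pm 2q$: the induced change $\pm 2q s_j$ in $\sum_i a_i s_i$ is a multiple of $q$ in the odd case and of $2q$ in the even case. Since $\mu^{\mathrm{red}}$ is obtained from $\mu$ by applying $b_j$ such shifts in each coordinate, one concludes $\mu^{\mathrm{red}} \in \mathcal L \cap C(q)$. Moreover, because $\sgn{a_j^{\mathrm{red}}} = \sgn{a_j}$ the reduction preserves $\menos$, and because $|a_j| = |a_j^{\mathrm{red}}| + 2q b_j$ one has $\norma{\mu} = \norma{\mu^{\mathrm{red}}} + q\sum_j b_j$. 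Conversely, given $\mu^{\mathrm{red}} = \tfrac12(c_1,\ldots,c_m) \in \mathcal L \cap C(q)$, the preimages are in bijection with tuples $(b_1,\ldots,b_m) \in \Z_{\geq 0}^m$ via $a_j = \sgn{c_j}(|c_j| + 2q b_j)$.

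Putting this together, to count $\mu \in \mathcal L$ with $\norma{\mu} = k + \tfrac{m}{2}$ and $\menos(\mu) \equiv \epsilon \pmod 2$, I fix $\beta := \sum_j b_j \geq 0$, pick $\mu^{\mathrm{red}}$ with $\norma{\mu^{\mathrm{red}}} = (k - \beta q) + \tfrac{m}{2}$ of the required sign parity (yielding $N_{\mathcal L}^{\mathrm{red}}(\epsilon, k - \beta q)$ choices), and then choose an $m$-tuple with sum $\beta$ ($\binom{\beta + m - 1}{m-1}$ choices by stars-and-bars). Summing over $\beta$ from $0$ to $\lfloor k/q \rfloor$, beyond which $N_{\mathcal L}^{\mathrm{red}}(\epsilon, k - \beta q)$ vanishes, gives \eqref{eq6:N^red_formula}. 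No serious obstacle is present; the whole proof rests on the $2q$-translation invariance of the defining congruence combined with an elementary combinatorial count.
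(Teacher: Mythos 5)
Your proof is correct and follows essentially the same route as the paper: you reduce each $\mu\in\mathcal L$ to its unique representative in $\mathcal L\cap C(q)$ lying in the same orthant (the paper phrases this as $\mu\in\mu_0+q\Z^m$, which is exactly your coordinatewise $2q$-shift), use the invariance of the defining congruence under such shifts, note that $\norma{\cdot}$ drops by $q\sum_j b_j$ while $\menos$ is preserved, and count preimages by stars and bars, giving $\binom{\beta+m-1}{m-1}$. No discrepancies worth noting.
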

\begin{proof}
We recall that $N_{\mathcal L} (\epsilon,k)=\#\{\mu\in\mathcal L: \norma{\mu}=k+\frac m2,\; R(\mu)\equiv\epsilon\pmod2\}$.
Write $k=\alpha q+k_0$ with $0\leq k_0<q$, thus $\lfloor{k}/{q}\rfloor=\alpha$.
If $\alpha=0$, \eqref{eq6:N^red_formula} reduces to $N_{\mathcal L} (\epsilon,k_0) = N_{\mathcal L}^{\mathrm{red}}(\epsilon,k_0)$, which is true since every $\mu\in\ZZ^m$ with $\norma{\mu}=k_0+\tfrac m2<q+\tfrac m2$ is $q$-reduced.
We now assume $\alpha>0$.
Note that $\mu+q\Z^m\subset \mathcal L$ for every $\mu\in\mathcal L$ by \eqref{eq3:L(q;s)} and \eqref{eq3:L(q;s;h)}.
On the one hand, for every $\mu\in\ZZ^m$ there is exactly one element $\mu_0\in C(q)$ in the same orthant such that $\mu\in\mu_0+q\Z^m$.
Clearly, $\norma{\mu_0}=(\alpha-\beta)r+k_0+\tfrac m2$ for some $0\leq \beta\leq\alpha$ if $\norma{\mu}=\alpha q+k_0+\tfrac m2$.

On the other hand, each $q$-reduced element $\mu_0=\tfrac12(a_1,\dots,a_m)\in\mathcal L$ with $\norma{\mu_0}=(\alpha-\beta)q+k_0+\tfrac m2$ is related to exactly $\binom{\beta+m-1}{m-1}$ elements $\mu$ with $\norma{\mu}=\alpha q+k_0+\tfrac m2$ in the same orthant of $\ZZ^m$ as $\mu_0$.
Indeed, $\mu=\mu_0+\eta$ for each
$$
\eta=(\tfrac{a_1}{|a_1|} \,h_1 q,\dots, \tfrac{a_m}{|a_m|} \, h_mq)
$$
with $h_j\geq0$ and $\beta=h_1+\dots+h_m$.
The number of choices for $\eta$ is the number of ways to write $\beta$ as the ordered sum of $m$ non-negative integers (ordered partitions), namely $\binom{\beta+m-1}{m-1}$.
This establishes \eqref{eq6:N^red_formula}.
\end{proof}

\begin{corollary}\label{cor6:finiteness-eigenv}
Two $2m-1$-dimensional spin lens spaces $(L,\tau)$ and $(L',\tau')$ with fundamental group of order $q$ are Dirac isospectral if and only if the multiplicities $\mult_{(L,\tau)}(\pm\lambda_k)$ and $\mult_{(L',\tau')}(\pm\lambda_k)$ of the Dirac eigenvalues $\pm\lambda_k=\pm (k+\tfrac{2m-1}{2})$ for $(L,\tau)$ and $(L',\tau')$ respectively coincide for all $0\leq k<mq$.
\end{corollary}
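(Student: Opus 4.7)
The forward direction is immediate, so I would focus on the converse. Assume $\mult_{(L,\tau)}(\pm\lambda_k)=\mult_{(L',\tau')}(\pm\lambda_k)$ for all $0\leq k<mq$; the plan is to perform two successive triangular inversions to conclude that the full sequences of lattice counts $N_{\mathcal L}(\epsilon,\cdot)$ and $N_{\mathcal L'}(\epsilon,\cdot)$ coincide, and then invoke Corollary~\ref{cor4:char-lens}.

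The first inversion upgrades equality of multiplicities to equality of counts $N_{\mathcal L}(\epsilon,k)=N_{\mathcal L'}(\epsilon,k)$ for $\epsilon\in\{0,1\}$ and $0\leq k<mq$. Theorem~\ref{thm4:dimV-lens} writes each multiplicity as a sum $\sum_{r=0}^{k}\binom{r+m-2}{m-2}N_{\mathcal L}(\cdot,k-r)$ whose $r=0$ term contributes $N_{\mathcal L}(0,k)$ (for $\mult_{(L,\tau)}(-\lambda_k)$) or $N_{\mathcal L}(1,k)$ (for $\mult_{(L,\tau)}(+\lambda_k)$) with unit coefficient, while every other term involves counts at strictly smaller indices. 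The induction on $k$ already used in the proof of Corollary~\ref{cor4:char-lens} applies verbatim to peel these terms off.

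The second inversion operates at the level of reduced counts. For a fixed residue $k_0\in\{0,\ldots,q-1\}$ and $k=\alpha q+k_0$, Proposition~\ref{prop6:finiteness} gives
$$
N_{\mathcal L}(\epsilon,\alpha q+k_0)=\sum_{\beta=0}^{\alpha}\tbinom{\beta+m-1}{m-1}\,N_{\mathcal L}^{\mathrm{red}}(\epsilon,(\alpha-\beta)q+k_0),
$$
another lower-triangular system with leading coefficient $\tbinom{m-1}{m-1}=1$. Hence induction on $\alpha$ transfers the equalities obtained in the previous step into $N_{\mathcal L}^{\mathrm{red}}(\epsilon,\alpha q+k_0)=N_{\mathcal L'}^{\mathrm{red}}(\epsilon,\alpha q+k_0)$ throughout the range $\alpha q+k_0<mq$.

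Since $N_{\mathcal L}^{\mathrm{red}}(\epsilon,j)$ vanishes for $j\geq mq-\tfrac m2$, the indices $j<mq$ already exhaust every potentially nonzero reduced count; therefore all reduced counts of $\mathcal L$ and $\mathcal L'$ agree. Feeding this back into Proposition~\ref{prop6:finiteness} yields $N_{\mathcal L}(\epsilon,k)=N_{\mathcal L'}(\epsilon,k)$ for every $k\geq 0$ and every $\epsilon$, and Corollary~\ref{cor4:char-lens} then delivers Dirac isospectrality. There is no real obstacle here—the proof is pure triangular bookkeeping—but the one step deserving a moment of care is verifying that the cutoff $k<mq$ indeed covers the support of the reduced counts, which follows from the bound $\norma{\mu}\leq m(q-\tfrac12)$ valid for every $q$-reduced $\mu\in\ZZ^m$.
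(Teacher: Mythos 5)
Your proposal is correct and follows essentially the same route as the paper: a triangular inversion of the multiplicity formula (as in the proof of Corollary~\ref{cor4:char-lens}) to recover $N_{\mathcal L}(\epsilon,k)$ for $k<mq$, followed by the triangular inversion of Proposition~\ref{prop6:finiteness} to recover the reduced counts, whose support lies below the cutoff, and then Corollary~\ref{cor4:char-lens} again. You have merely written out in full the details the paper compresses into ``one can check that the converse also holds.''
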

\begin{proof}
By Proposition~\ref{prop6:finiteness}, the numbers $N_{\mathcal L}^{\mathrm{red}}(\epsilon,k)$ for every $k<qm$ determine $N_{\mathcal L}(\epsilon,k)$ for every $k$.
One can check that the converse also holds.
Then, the assertion follows by Corollary~\ref{cor4:char-lens}.
\end{proof}

Now Corollary~\ref{cor4:char-lens} together with Proposition~\ref{prop6:finiteness} allows us to find all Dirac isospectral lens spaces for small values of $m$ and $q$ with the help of a computer.
The algorithm that we used to find Dirac isospectral lens spaces is straightforward. For given $m\ge 2$, $q\in\N$, generate a complete list of representations of isometry classes of lens spaces with a chosen orientation and spin structure of dimension $2m-1$ whose fundamental group is of order $q$ by using Proposition~\ref{prop2:lens-isom}. Next, calculate for all $0\le k< mq$ and $\epsilon\in\{0,1\}$ the numbers $N_{\mathcal L}^{\textrm{red}}(\epsilon,k)$ for all previously identified lens spaces. The last step is to partition the set of lens spaces into isospectral families by comparing all the numbers $N_{\mathcal{L}}^{\textrm{red}}(\epsilon,k)$ for every pair of lens spaces.

We have found examples of Dirac isospectral lens spaces using the above procedure.
In Table~\ref{table:examples} we show all of the examples for $n=7,q\leq 100$, for $n=11,q\leq 50$, for $n=15,q\leq 60$ and for $n=19,q\leq 40$.

\begin{table}\label{table:examples}
\caption{Examples of Dirac isospectral spin lens spaces in dimensions $n=7,11,15,19$ for low values of $q$.}
$
\begin{array}{r@{}r@{;\;}r@{,}r@{,\,}r@{,}r@{}r@{\qquad} r@{}r@{;\;}r@{,}r@{,\,}r@{,}r@{}r@{\qquad} r@{}r@{;\;}r@{,}r@{,\,}r@{,}r@{}r}
\multicolumn{21}{c}{\text{Dimension }n=7\rule{0pt}{0pt}}\\ \hline
L(& 32 & 1 &  3 &  5 & 15)&_{\tau_0} & L(& 49 & 1 &  6 &  8 & 22)&          & L(& 64 & 1 &  7 &  9 & 31)&_{\tau_0} \\
L(& 32 & 1 &  3 &  5 & 15)&_{\tau_1} & L(& 49 & 1 &  6 &  8 & 20)&          & L(& 64 & 1 &  7 &  9 & 31)&_{\tau_1} \\ \hline
L(& 75 & 1 &  4 & 14 & 16)&          & L(& 75 & 1 &  4 & 11 & 34)&          & L(& 80 & 1 &  3 &  9 & 27)&_{\tau_0} \\
L(& 75 & 1 &  4 & 11 & 19)&          & L(& 75 & 1 &  4 & 14 & 31)&          & L(& 80 & 1 &  9 & 13 & 37)&_{\tau_0} \\ \hline
L(& 81 & 1 &  8 & 19 & 37)&          & L(& 81 & 1 &  8 & 10 & 28)&          & L(& 81 & 1 &  8 & 10 & 37)&          \\
L(& 81 & 1 &  8 & 26 & 37)&          & L(& 81 & 1 &  8 & 10 & 26)&          & L(& 81 & 1 &  8 & 10 & 35)&          \\ \hline
L(& 96 & 1 & 11 & 13 & 47)&_{\tau_0} & L(& 98 & 1 & 13 & 15 & 43)&_{\tau_0} & L(& 98 & 1 & 13 & 15 & 41)&_{\tau_0} \\
L(& 96 & 1 & 11 & 13 & 47)&_{\tau_1} & L(& 98 & 1 & 13 & 15 & 41)&_{\tau_1} & L(& 98 & 1 & 13 & 15 & 43)&_{\tau_1} \\ \hline
\end{array}
$

$
\begin{array}{r@{}r@{;\;}r@{,}r@{,\,}r@{,}r@{,}r@{,}r@{}r@{\qquad} r@{}r@{;\;}r@{,}r@{,\,}r@{,}r@{,}r@{,}r@{}r@{\qquad} r@{}r@{;\;}r@{,}r@{,}r@{,}r@{,\,}r@{,}r@{}r}
\multicolumn{27}{c}{\text{Dimension }n=11\rule{0pt}{16pt}}\\ \hline
L(&40 & 1 & 1 &  1 & 11 & 11 & 11)&_{\tau_0} & L(&40 & 1 & 1 & 11 & 11 & 13 & 17)&_{\tau_0} & L(&44 & 1 & 3 &  5 &  7 &  9 & 19)&_{\tau_0} \\
L(&40 & 1 & 1 &  9 & 11 & 11 & 19)&_{\tau_0} & L(&40 & 1 & 1 &  3 &  7 & 11 & 11)&_{\tau_0} & L(&44 & 1 & 3 &  5 &  7 & 13 & 15)&_{\tau_0} \\
\multicolumn{9}{c}{}                         & L(&40 & 1 & 3 &  7 &  9 & 11 & 19)&_{\tau_0} &                                              \\ \hline
L(&44 & 1 & 3 &  5 &  7 &  9 & 19)&_{\tau_1} & L(&48 & 1 & 1 &  5 &  7 &  7 & 13)&_{\tau_0} & L(&48 & 1 & 1 &  7 &  7 & 17 & 23)&_{\tau_0} \\
L(&44 & 1 & 3 &  5 &  7 & 13 & 15)&_{\tau_1} & L(&48 & 1 & 5 &  7 & 11 & 13 & 19)&_{\tau_0} & L(&48 & 1 & 1 &  1 &  7 &  7 &  7)&_{\tau_0} \\
\multicolumn{9}{c}{}                         & L(&48 & 1 & 1 &  7 &  7 & 11 & 19)&_{\tau_0} & \multicolumn{9}{c}{}                         \\ \hline
\end{array}
$

$
\begin{array}{r@{}r@{;\;}r@{,}r@{,\,}r@{,}r@{,}r@{,}r@{,}r@{,}r@{}r@{\qquad} r@{}r@{;\;}r@{,}r@{,\,}r@{,}r@{,}r@{,}r@{,}r@{,}r@{}r}
\multicolumn{22}{c}{\text{Dimension }n=15\rule{0pt}{16pt}}\\ \hline
L(&39& 1 & 2 & 4 & 5 &  7 & 10 & 14 & 16)&          & L(&52& 1 & 3 & 5 & 7 &  9 & 11 & 17 & 25)&_{\tau_0} \\
L(&39& 1 & 2 & 4 & 7 &  8 & 10 & 16 & 17)&          & L(&52& 1 & 3 & 5 & 7 &  9 & 15 & 23 & 25)&_{\tau_1} \\ \hline
L(&52& 1 & 3 & 5 & 7 &  9 & 11 & 19 & 21)&_{\tau_1} & L(&52& 1 & 3 & 5 & 7 &  9 & 11 & 19 & 21)&_{\tau_0} \\
L(&52& 1 & 3 & 5 & 7 &  9 & 11 & 17 & 23)&_{\tau_1} & L(&52& 1 & 3 & 5 & 7 &  9 & 11 & 17 & 23)&_{\tau_0} \\ \hline
L(&52& 1 & 3 & 5 & 7 &  9 & 15 & 23 & 25)&_{\tau_0} & L(&56& 1 & 3 & 5 & 9 & 11 & 13 & 19 & 23)&_{\tau_0} \\
L(&52& 1 & 3 & 5 & 7 &  9 & 11 & 17 & 25)&_{\tau_1} & L(&56& 1 & 3 & 5 & 9 & 11 & 13 & 15 & 27)&_{\tau_0} \\ \hline
L(&56& 1 & 3 & 5 & 9 & 11 & 13 & 15 & 27)&_{\tau_1} \\
L(&56& 1 & 3 & 5 & 9 & 11 & 13 & 19 & 23)&_{\tau_1} \\ \hline
\end{array}
$

$
\begin{array}{r@{}r@{;\;}r@{,}r@{,\,}r@{,}r@{,}r@{,}r@{,}r@{,}r@{,}r@{,}r@{}r@{\qquad} r@{}r@{;\;}r@{,}r@{,\,}r@{,}r@{,}r@{,}r@{,}r@{,}r@{,}r@{,}r@{}r}
\multicolumn{24}{c}{\text{Dimension }n=19\rule{0pt}{16pt}}\\ \hline
L(&24 & 1 & 1 & 1 &  1 &   1 &  5 &  5 &  5 &  5 &  5)&_{\tau_0} & L(&40 & 1 & 1 & 1 &  9 &  9 & 11 & 11 & 11 & 19 & 19)&_{\tau_0} \\
L(&24 & 1 & 1 & 1 &  5 &   5 &  5 &  7 &  7 & 11 & 11)&_{\tau_0} & L(&40 & 1 & 1 & 1 &  1 &  1 & 11 & 11 & 11 & 11 & 11)&_{\tau_0} \\
L(&24 & 1 & 1 & 1 &  1 &   5 &  5 &  5 &  5 &  7 & 11)&_{\tau_0} & L(&40 & 1 & 1 & 1 &  1 &  9 & 11 & 11 & 11 & 11 & 19)&_{\tau_0} \\ \hline
L(&40 & 1 & 1 & 1 &  3 &   7 &  9 & 11 & 11 & 11 & 19)&_{\tau_0} & L(&40 & 1 & 1 & 3 &  3 &  7 &  7 &  9 & 11 & 11 & 19)&_{\tau_0} \\
L(&40 & 1 & 1 & 1 &  1 &   3 &  7 & 11 & 11 & 11 & 11)&_{\tau_0} & L(&40 & 1 & 1 & 3 &  7 &  9 & 11 & 11 & 13 & 17 & 19)&_{\tau_0} \\
L(&40 & 1 & 1 & 3 &  7 &   9 &  9 & 11 & 11 & 19 & 19)&_{\tau_0} & L(&40 & 1 & 1 & 3 &  3 &  7 &  7 & 11 & 11 & 13 & 17)&_{\tau_0} \\
L(&40 & 1 & 1 & 1 &  9 &  11 & 11 & 11 & 13 & 17 & 19)&_{\tau_0} & L(&40 & 1 & 1 & 1 &  3 &  3 &  7 &  7 & 11 & 11 & 11)&_{\tau_0} \\
L(&40 & 1 & 1 & 1 &  1 &  11 & 11 & 11 & 11 & 13 & 17)&_{\tau_0} & L(&40 & 1 & 1 & 1 &  3 &  7 & 11 & 11 & 11 & 13 & 17)&_{\tau_0} \\
\multicolumn{13}{c}{}                                            & L(&40 & 1 & 1 & 1 & 11 & 11 & 11 & 13 & 13 & 17 & 17)&_{\tau_0} \\ \hline
\end{array}
$
\end{table}

Surprisingly, we have found no example of Dirac isospectral lens spaces in dimensions $n\equiv 1\pmod4$.
Note that the order $q$ of the fundamental group of a lens space which is spin and of dimension $n\equiv 1\pmod4$ is necessarily odd.
We have checked the non-existence for $q\leq 1001$ in dimension $n=5$, for $q\leq 501$ in dimension $n=9$, for $q\leq 251$ in dimension $n=13$, and for $q\leq 125$ in dimension $n=17$.
It is thus reasonable to conjecture the non-existence of such examples in general.

\begin{conjecture}\label{conj6:dim5}
Two Dirac isospectral spin lens spaces of dimension $n\equiv 1\pmod4$ are necessarily isometric.
\end{conjecture}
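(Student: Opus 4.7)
Since $n=2m-1\equiv 1\pmod 4$, the integer $m$ is odd, which by Theorem~\ref{thm2:spin-structures} forces $q$ to be odd and each lens space to carry a unique spin structure. By Corollary~\ref{cor4:char-lens} and the corollary immediately following it, two such lens spaces $L(q;s)$ and $L(q;s')$ are Dirac isospectral if and only if their affine congruence lattices $\mathcal L(q;s),\mathcal L(q;s')\subset\mathbb E^m$ are $\norma{\cdot}$-isospectral. Combining Corollary~\ref{cor3:relation-isometries} with Proposition~\ref{prop2:lens-isom} translates isometry of the lens spaces into the existence of a $\norma{\cdot}$-isometry between these lattices. The conjecture therefore reduces to a purely lattice-theoretic statement: when $q$ and $m$ are both odd, $\norma{\cdot}$-isospectrality of $\mathcal L(q;s)$ and $\mathcal L(q;s')$ forces $\norma{\cdot}$-isometry.

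To attack this, I would adapt the generating-function technique of Ikeda as refined in~\cite{LMR} for the Laplace operator. Writing $\xi=e^{2\pi i/q}$ and using B\"ar's formula~\eqref{eq1:F^pm}, Dirac isospectrality is equivalent to the equality of rational functions
\begin{equation*}
\sum_{k=0}^{q-1}\frac{\chi^\mp(\tau(\gamma^k))-z\chi^\pm(\tau(\gamma^k))}{\prod_{j=1}^m(1-z\xi^{ks_j})(1-z\xi^{-ks_j})}
\end{equation*}
for the tuples $s$ and $s'$. The standard strategy is then to compare the principal parts at each pole $z=\xi^\ell$: the residue at $\xi^\ell$ is controlled by the set of pairs $(k,j)$ with $ks_j\equiv -\ell\pmod q$, and these data, taken for all $\ell$, should ultimately recover the orbit of $(s_1,\dots,s_m)$ under the equivalence in Proposition~\ref{prop2:lens-isom}(4). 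Since $q$ is odd, a preliminary change of variables $a_j=2b_j+1$ (using $2\in(\Z/q\Z)^\times$) identifies $\mathcal L(q;s)$ with the translate $\tfrac12(1,\dots,1)+\{b\in\Z^m:\sum b_js_j\equiv -\tfrac{q+1}{2}\sum s_j\pmod q\}$, which brings the problem closer to the integer-lattice framework of~\cite{LMR}. I would first verify the argument by hand in dimension $n=5$ ($m=3$), where the rational function has only $6q$ poles and the residue identities should be explicit enough to handle directly, before attempting general odd $m$ by induction.

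The central obstacle, and the reason the statement is only conjectured, is the presence of the spinor characters $\chi^\pm$ in the numerators. In the Laplace case the numerators are polynomials of low degree, so the residue data are essentially determined by the denominators; here $\chi^\pm(\tau(\gamma^k))$ carries delicate sign information inherited from the covering $\Spin(2m)\to\SO(2m)$ (through the factors $e^{\pm i\pi(q+1)ks_j/(2q)}$ implicit in~\eqref{eq2:spin-str-odd}), and these signs can a priori conspire to produce non-trivial equalities between rational functions coming from inequivalent $s$. The reason such a conspiracy should be forbidden precisely for odd $m$ is the symmetry $\mu\mapsto-\mu$ on $\mathbb E^m$: when $m$ is odd this exchanges the two $R$-parities, so oriented and unoriented $\norma{\cdot}$-isospectrality coincide, giving the rigidity that is absent in the even-$m$ situation where Theorems~\ref{thm5:q-odd} and~\ref{thm5:q-even} already produce counterexamples. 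Turning this extra symmetry into a cancellation statement strong enough to kill every non-trivial $\norma{\cdot}$-isospectral pair---presumably via a Newton-identity-style argument on the power sums $\sum_\mu\norma{\mu}^p$ restricted to each orthant---is the step where I expect the real difficulty to lie.
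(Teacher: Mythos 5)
You have not proved the statement, and neither does the paper: this is stated as a \emph{conjecture}, supported only by computational evidence (non-existence of examples checked for $q\leq 1001$ in dimension $5$, $q\leq 501$ in dimension $9$, etc.), so there is no proof to compare against. Your first paragraph merely reassembles reductions the paper already establishes: $m$ odd forces $q$ odd and a unique spin structure (Theorem~\ref{thm2:spin-structures}), Dirac isospectrality is equivalent to $\norma{\cdot}$-isospectrality of the affine congruence lattices (Corollary~\ref{cor4:char-lens} and the corollary following it), and isometry corresponds to $\norma{\cdot}$-isometry (Corollary~\ref{cor3:relation-isometries}). The entire content of the conjecture is the remaining rigidity claim --- that for $q$ and $m$ odd, $\norma{\cdot}$-isospectrality of $\mathcal L(q;s)$ and $\mathcal L(q;s')$ forces a $\norma{\cdot}$-isometry --- and this you do not prove; you only outline a strategy and explicitly flag the decisive step as the one ``where I expect the real difficulty to lie.'' A plan with an acknowledged missing core is not a proof.

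Two further cautions about the proposed route. First, the closely analogous rigidity statement is \emph{false} in the Laplace setting: by \cite{LMR} Laplace isospectrality of lens spaces is likewise equivalent to one-norm isospectrality of (integer) congruence lattices, and Ikeda's examples \cite{Ik80} show that such isospectrality does not force isometry. Hence any residue or generating-function argument along the lines you sketch must exploit the specific structure of the affine lattices in $\ZZ^m$ and the oddness of $m$ in an essential, quantitative way; the denominators alone cannot determine the parameters. Second, the $\mu\mapsto-\mu$ symmetry you invoke only shows that oriented and unoriented $\norma{\cdot}$-isospectrality coincide for odd $m$; it is an identification of the two $R$-parities, not a rigidity mechanism, and the existence of counterexamples for even $m$ (Theorems~\ref{thm5:q-odd} and~\ref{thm5:q-even}) does not imply that this symmetry is what rules them out for odd $m$. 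As it stands, the statement remains open, and your text should be presented as a possible line of attack rather than as a proof.
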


The non-existence of such examples contrasts with the large number of known examples in the case of flat manifolds.
Miatello and Podest\'a showed in \cite[\S4]{MP06} a rich number of examples in any dimension $n\geq 4$.
In particular, they show a family of pairwise non-homeomorphic even dimensional Dirac isospectral compact flat manifolds with cardinality depending exponentially on the dimension.

We end this paper with a couple of remarks on the computational examples in dimensions congruent to $3$ modulo $4$.

\begin{remark}\label{rem6:dim7}
There seems to be a relation between Dirac isospectrality and $p$-isospectrality for all $p$ among $7$-dimensional lens spaces.
Most of the examples of Dirac isospectral lens spaces in dimension $7$ are also $p$-isospectral for all $p$.
For example, the pairs of lens spaces in the family given in Theorem~\ref{thm5:q-odd} are $p$-isospectral for all $p$.
Indeed, one can check that $(0,1,2,4)$ is hereditarily good mod any odd $r$ in the sense of \cite[Def.~1]{DD}, thus by \cite[Thm.~1]{DD}, the lens spaces $L(r^2t;1,1+rt,1+2rt,1+4rt)$ and $L(r^2t;1,1-rt,1-2rt,1-4rt)$ are $p$-isospectral for all $p$, for every odd $r$ and every $t\geq1$.

However, we have found the following exceptions:
$$
\begin{array}{l@{\qquad}l@{\qquad}l}
L(75; 1, 4, 14, 16) & L(150; 1, 11, 29, 31) & L(300; 1, 19, 41, 79) \\
L(75; 1, 4, 11, 19) & L(150; 1, 11, 19, 41) & L(300; 1, 19, 59, 61) \\[2mm]
\rule{0pt}{18pt}
L(75; 1, 4, 11, 34) & L(150; 1, 11, 31, 59) & L(300; 1, 19, 41, 139) \\
L(75; 1, 4, 14, 31) & L(150; 1, 11, 19, 71) & L(300; 1, 19, 59, 121).
\end{array}
$$
These are pairs of Dirac isospectral lens spaces which are not $p$-isospectral for any $p$.
Here, when $q$ is even, the lens spaces in each pair are Dirac isospectral if they are both equipped with $\tau_0$ or $\tau_1$ respectively.

In the inverse direction there are exceptions as well.
The lens spaces $L(100;1, 9, 11, 29)$ and $L(100; 1, 9, 11, 31)$ are $p$-isospectral for all $p$, but not Dirac isospectral for any choice of combination of spin structures.
The same applies to the lens spaces $L(100; 1, 9, 21, 39)$ and $L(100; 1, 9, 29, 31)$.
\end{remark}

\begin{remark}
In \cite{LMR} there are the first examples of non-isometric compact Riemannian manifolds which are isospectral with respect to the Hodge-Laplace operator acting on $p$-forms for every $p$, but are not strongly isospectral (isospectral for every natural differential operator on a natural bundle).
These examples show that all $p$-spectra together do not determine the spectra of all natural operators.

The pairs of non-isometric Dirac isospectral lens spaces in Theorem~\ref{thm5:q-odd} are also $p$-isospectral for every $p$.
Hence, this example shows that the Laplace spectra over all fundamental vector bundles do not determine the spectra of all natural operators.
In other words and in the notation of \cite[\S2,\S8]{LMR}, these lens spaces are $\tau$-isospectral for every fundamental representation $\tau$ of $K\simeq\Spin(2m-1)$ but are not strongly isospectral.
The fundamental representations of $\Spin(2m-1)$ are $\tau_{1},\dots,\tau_{m-1}$.
If $1\leq p\leq m-2$, then $\tau_p$ has highest weight $\varepsilon_1+\dots+\varepsilon_{p}$ and induces the vector bundle of $p$-forms and $\tau_{m-1}$ has highest weight $\tfrac12(\varepsilon_1+\dots+\varepsilon_{m-1})$ and induces the spinor bundle.

\end{remark}

\begin{remark}
In \cite{DD} the examples of \cite{LMR} were extended and simplified.
In particular, the authors gave a sufficient condition on the parameters of two lens spaces in order for them to be $p$-isospectral for all $p$.
During the work on this paper the authors wondered whether something similar as in \cite{DD} can be done in the Dirac case.
\end{remark}

\begin{remark}
All the examples of Dirac isospectral lens spaces we have found are homotopy-equivalent. Note that this is very different from the Laplace-Beltrami case, where Ikeda~\cite{Ik80} found isospectral non-homotopy equivalent lens spaces in dimension seven with $q = 13$. However, the reasons for this phenomenon remain still obscure to the authors.
\end{remark}

\bibliographystyle{plain}

\begin{thebibliography}{LMR14\,}
\bibitem[AB98]{AB}
    {\sc Ammann B. \and B\"ar C.}
    {\it The {D}irac operator on nilmanifolds and collapsing circle bundles.}
    Ann. Global Anal. Geom. \textbf{16}:3 (1998), 221--253.

\bibitem[B\"a91]{Bar91}
    {\sc B\"ar C.}
    {\it Das Spektrum von Dirac-Operatoren}.
    Bonner Math. Schr. \textbf{217}, 1991.

\bibitem[B\"a92]{Bar92}
    {\sc B\"ar C.}
    {\it The Dirac operator on homogeneous spaces and its spectrum on 3-dimensional lens spaces.}
    Arch. Math. \textbf{59} (1992), 65--79.

\bibitem[B\"a96]{Bar96}
    {\sc B\"ar C.}
    {\it The Dirac operator on space forms of positive curvature.}
    J. Math Soc. Japan \textbf{48}:1 (1996), 69--83.

\bibitem[B\"a00]{spinstrsurvey}
    {\sc B{\"a}r C.}
    {\it Dependence of the {D}irac spectrum on the {S}pin structure}.
    In ``Global analysis and harmonic analysis ({M}arseille-{L}uminy, 1999)'', S\'emin. Congr. \textbf{4}, Soc. Math. France, Paris (2000), 17--33.

\bibitem[B\"a06]{Bar06}
    {\sc B{\"a}r C.}
    {\it The spectrum of the Dirac operator}.
    In ``Dirac operators: yesterday and today'', Int. Press, Somerville, MA (2005), 145--162.

\bibitem[Bo11]{Boldt}
    {\sc Boldt S.}
    {\it Eigenschaften des Dirac-Spektrums auf Linsenr\"aumen.}
    Diplomarbeit, Humboldt-Universit\"at, Berlin, 2011.

\bibitem[Co]{Co}
    {\sc Cohen M.}
    A course in simple-homotopy theory.
    {\it Grad. Texts in Math.} \textbf{10}.
    Springer-Verlag, New York-Heidelberg-Berlin, 1970.

\bibitem[DD14]{DD}
    {\sc DeFord D. \and Doyle P.}
    {\it Cyclic groups with the same Hodge series}.
    \texttt{arXiv:1404.2574 [math.RA]} (2014).

\bibitem[Fra87]{Franc}
    {\sc Franc A.}
    {\it Spin structures and Killing spinors on lens spaces.}
    J. Geom. Phys. \textbf{4}:3 (1987), 277--287.

\bibitem[Fr]{Friedrich}
    {\sc Friedrich T.}
    {\it Dirac operators in Riemannian geometry}.
    Grad. Stud. Math. \textbf{25}.
    Amer. Math. Soc., Providence, 2000.

\bibitem[Gi]{Ginoux}
    {\sc Ginoux N.}
    {\it The Dirac spectrum}.
    Lecture Notes in Math. \textbf{1976}.
    Springer-Verlag, Berlin, 2009.

\bibitem[GW]{GoodmanWallach}
    {\sc Goodman R. \and Wallach N.}
    {\it Representations and invariants of the classical groups}.
    Encyclopedia Math. Appl. \textbf{68},
    Cambridge Univ. Press, Cambridge, 1998.

\bibitem[Hi74]{Hitchin}
    {\sc Hitchin N.}
    {\it Harmonic spinors}.
    Advances in Math. \textbf{14} (1974), 1--55.

\bibitem[Ik80]{Ik80}
    {\sc Ikeda A.}
    {\it On lens spaces which are isospectral but not isometric}.
    Ann. Sci. \'Ecole Norm. Sup. (4) \textbf{13}:3 (1980), 303--315.

\bibitem[LMR13]{LMR}
    {\sc Lauret E., Miatello R. \and Rossetti J.P.}
    {\it Spectra of lens spaces from 1-norm spectra of congruence lattices.}
    \texttt{arXiv:1311.7167 [math.DG]} (2013).

\bibitem[MPT11]{MPT}
    {\sc Marcolli M., Pierpaoli E. \and Teh K.}
    {\it The spectral action and cosmic topology}.
    Comm. Math. Phys. \textbf{304}:1 (2011), 125--174.

\bibitem[MP06]{MP06}
    {\sc Miatello R. \and Podest\'a R.}
    {\it The spectrum of twisted {D}irac operators on compact flat manifolds}.
    Trans. Amer. Math. Soc. \textbf{358}:10 (2006), 4569--4603.

\bibitem[P06]{Podesta06}
    {\sc Podest\'a R.}
    {\it Spectral properties of elliptic operators on bundles of {$\Bbb Z_2^k$}-manifolds}.
    Rev. Un. Mat. Argentina \textbf{47}:1 (2006), 135--149.

\bibitem[Su79]{Sulanke}
    {\sc Sulanke S.}
    {\it Berechnung des Spektrums des Quadrates des Dirac-Operators auf der Sph\"are.}
    Thesis, Humboldt-Universit\"at, Berlin, 1979.

\bibitem[Teh13]{Teh}
    {\sc Teh K.}
    {\it Nonperturbative spectral action of round coset space of SU(2)}.
    J. Noncommut. Geom. \textbf{7} (2013), 677--708.
\end{thebibliography}

\end{document}